\documentclass[10pt]{amsart}

\pdfoutput=1
\usepackage{geometry} 
\geometry{a4paper} 
\usepackage{graphicx} 
\usepackage{amsmath, amsfonts, amssymb, amsthm}
\usepackage{advdate}
\usepackage{hyperref}
\usepackage[utf8x]{inputenc}
\usepackage{changes}
\usepackage{subcaption}
\usepackage{enumerate}
\usepackage{adjustbox}

\DeclareRobustCommand{\bbone}{\text{\usefont{U}{bbold}{m}{n}1}}

\usepackage{bm}
\newtheorem{theorem}{Theorem}
\theoremstyle{remark}
\newtheorem{remark}{Remark}
\newtheorem{lemma}[theorem]{Lemma}

\newcommand{\tr}{\intercal}
\newcommand{\p}{{\mathsf{p}}}
\newcommand{\m}{{\mathsf{m}}}
\newcommand{\q}{{\mathsf{q}}}
\newcommand{\g}{{\mathsf{g}}}

\newcommand{\de}{\mathrm{d\,}}
\newcommand{\lla}{\left\langle}
\newcommand{\rra}{\right\rangle}

\newcommand{\at}[2][]{#1|_{#2}}

\usepackage{tikz}

\hypersetup{
			colorlinks=true,
			linkcolor=black,
                        linktoc=page,
			anchorcolor=black,
			citecolor=blue,
			urlcolor=black,
 }

\linespread{1.2}

\title{Adaptive Elastic-Net estimation for sparse diffusion processes}
\author{Alessandro De Gregorio}
\address{Department of Statistical Sciences, ``Sapienza" University of Rome,
	P.le Aldo Moro, 5 - 00185, Rome, Italy}
\email{alessandro.degregorio@uniroma1.it}

\author{Dario Frisardi}
\address{Department of Statistical Sciences, ``Sapienza" University of Rome,
	P.le Aldo Moro, 5 - 00185, Rome, Italy}
\email{dario.frisardi@uniroma1.it}

\author{Francesco Iafrate}
\address{Department of Mathematics, University of Hamburg,
Bundestr. 55, 20146 Hamburg, Germany}
\email{francesco.iafrate@uni-hamburg.de}
\author{Stefano Iacus}
\address{Institute for Quantitative Social Science, Harvard University, 1737 Cambridge Street
CGIS Knafel Building, Room K350
Cambridge, MA 02138, USA}
\email{siacus@iq.harvard.edu}
\date{\today}

\begin{document}

\begin{abstract}

Penalized estimation methods for diffusion processes and dependent data have recently gained significant attention due to their effectiveness in handling high-dimensional stochastic systems. In this work, we introduce an adaptive Elastic-Net estimator for ergodic diffusion processes observed under high-frequency sampling schemes. Our method combines the least squares approximation of the quasi-likelihood with adaptive $\ell_1$ and $\ell_2$ regularization. This approach allows to  enhance prediction accuracy and interpretability while effectively recovering the sparse underlying structure of the model.

In the spirit of analyzing high-dimensional scenarios, we provide finite-sample guarantees for the (block-diagonal) estimator's performance by deriving high-probability non-asymptotic bounds for the $\ell_2$ estimation error. These results complement the established oracle properties in the high-frequency asymptotic regime with mixed convergence rates, ensuring consistent selection of the relevant interactions and achieving optimal rates of convergence. Furthermore, we utilize our results to analyze one-step-ahead predictions, offering non-asymptotic control over the $\ell_1$ prediction error.

The performance of our method is evaluated through simulations and real data applications, demonstrating its effectiveness, particularly in scenarios with strongly correlated variables.
\end{abstract}

\keywords{Discrete observations, ergodic diffusion processes, non-asymptotic bounds, oracle properties, pathwise optimization, prediction error,  regularized estimation}
\maketitle
\tableofcontents
\section{Introduction}

Nowadays, the regularization methods are very useful  for a comprehensive understanding of the underlying parametric model which generally is supposed sparse; that is some coefficients are exactly zero.  The main idea is to perform simultaneously the selection of the true model and to estimate the parameters. Penalized estimators of a parameter $\theta$ are generally defined as follows
\begin{equation}\label{eq:penmeth}\hat\theta_n \in \arg\min_{\theta\in\overline\Theta}\{\mathfrak L_n(\theta)+p(\theta)\}\end{equation}
where $\mathfrak L_n(\theta)
$ is a contrast function (negative log-likelihood or sum of squared residuals) based on a random sample of size $n$, $\Theta$ is the parameter space and $p(\theta)$ is the penalty function. If $p(\theta)=\lambda |\theta|^q, \lambda>0,$ where $q\in(0,1],$ we have the Bridge estimator introduced in \cite{frank1993statistical}, which for $q=1$ reduces to LASSO (Least Absolute Shrinkage Selection Operator) studied in \cite{tibshirani1996regression} for  linear regression. Furthermore, a good selection procedure should have the so-called oracle properties as discussed in \cite{fan2001variable, fan2004nonconcave, fan2006statistical}.

Penalized estimation for stochastic processes is a quite recent research topic in the field of statistical learning for random complex system and discrete-time dependent data, and in particular,  shrinkage estimators has been applied to multidimensional ergodic diffusion processes
\begin{equation}    \label{eq:sde1}
    \mathrm{d}X_t=b(X_t,\alpha)\mathrm{d}t+\sigma(X_t,\beta)\mathrm{d}W_t, \quad X_0=x_0,\end{equation} where $x_0$ is a deterministic initial point, $b: \mathbb{R}^d\times\Theta_\alpha\to\mathbb{R}^d$ and $\sigma: \mathbb{R}^d\times\Theta_\beta\to\mathbb{R}^d\otimes\mathbb{R}^r$ are Borel known functions (up to $\alpha$ and $\beta$) and $(W_t)_{t\geq0}$ is a $r$-dimensional standard Brownian motion.

In the high frequency setting, regularized estimation problems for discretely observed  low-dimensional sparse stochastic differential equations    \eqref{eq:sde1} (i.e., the dimensions  of the unknown parameters $\alpha$ and $\beta$ are fixed)
 have been dealt with in \cite{de2012adaptive}, \cite{masuda2017moment}, \cite{suzuki2020penalized}, \cite{kinoshita2019penalized} and \cite{de2021regularized}. The authors used penalized selection procedures \eqref{eq:penmeth} based on LASSO ($\ell_1$ constraints) and Bridge type penalties ($\ell_2$ constraints). The asymptotic oracle properties of the regularized estimators are derived:  selection consistency (i.e., consistently estimates null parameters as zero) and optimal rate asymptotic normality of the true subset model.  In \cite{10.1214/18-EJS1436}, the LASSO estimator has been analyzed for diffusion processes with small noise observed at continuous time. 

Recently, some papers dealt with penalized methods  for  high-dimensional diffusion processes where the number of parameters as well as the dimension of the model is large; for instance the reader can consult \cite{gaiffas2019sparse}, \cite{fujimori2019dantzig}, \cite{10.1214/20-EJS1775}, \cite{ciolek2022lasso}, \cite{10.3150/22-BEJ1574} and \cite{amorino2024sampling}.  Usually in this framework the statistical analysis focuses on regularized estimators for the drift term and some non-asymptotic oracle bounds are derived.

The Elastic-Net procedure (see \cite{zou2005regularization} and \cite{zou2009adaptive}) is a regularization method for a linear regression model developed to enhance the performance of both lasso and ridge regression techniques. It uses a combination of $\ell_1$ and $\ell_2$ penalties in the regularization term; namely the estimator \eqref{eq:penmeth} involves $ p(\theta)=\lambda_1|\theta|+\lambda_2|\theta|^2.$ The Elastic-Net allows to handle multicollinearity, more effectively than lasso regression, with the capability of selection of groups of correlated predictors together and then improving the prediction of the underlying model (usually LASSO procedures select one predictor of the group). For an insightful discussion of this method  see  \cite{zou2005regularization}.

In this paper, we introduce an adaptive Elastic-Net estimator for ergodic diffusion processes observed under high-frequency sampling schemes. Up to our knowledge, this is the first attempt to study the Elastic-Net problem for the statistical analysis of stochastic differential equations. Our method combines the least squares approximation introduced in \cite{wang1} with the adaptive $\ell_1$ and $\ell_2$
regularization; namely, let $\theta=(\alpha,\beta),$ the objective function appearing in \eqref{eq:penmeth} becomes
$$\mathfrak L_n(\theta)+ L_n(\theta)+R_n(\theta),$$
where $ L_n(\theta)$ is the adaptive LASSO penalty, while $ R_n(\theta)$
 is the Ridge term. The idea is to replace the quasi-log-likelihood function of \eqref{eq:sde1} with its second order Taylor expansion and adding the adaptive $\ell_1$ and $\ell_2$ penalties.  It is worth to mention that a crucial issue is the choice of the initial non-penalized estimator (i.e. the point where the approximation takes place), which have to satisfy some standard properties.   By means of this approach the estimator obtained from minimization problem \eqref{eq:penmeth} enhances prediction accuracy and interpretability while effectively recovering
the sparse underlying structure of the model. In this way it is possible to overcome some limitations of the LASSO regularization, especially when we consider multivariate diffusion process having correlated groups of variables. A part of our analysis is devoted to prove some asymptotic properties of the Elastic-Net estimator . Indeed, it is well-known that a good selection procedure should have some requirements; in particular, it should satisfy the so-called oracle properties (see, e.g., \cite{fan2001variable} and \cite{fan2006statistical}).

Furthermore, we introduce a block-diagonal Elastic-Net estimator, which is asymptotic equivalent to the original estimator, and prove that some non-asymptotic bounds for the estimation error (under suitable assumptions on the contrast function). These last results are particularly interesting because allow to provide deeper insight into the high-dimensional features of the estimator, by obtaining bounds (with high probability) depending on the dimension of the parametric space. It is worth mentioning that our non-asymptotic results is a first step toward the analysis of high-dimensional scenarios in this setting. We are finally able to  obtain finite sample guarantees for the mean absolute error of predictions at a future time. Our theoretical findings  are backed by empirical results that show that introducing a flexible Elastic-net penalty leads to better model selection and more accurate forecasting, when compared to LASSO regularization. 

The paper is organized as follows. In Section \ref{sec:defen}, we introduce the sparse parametric diffusion model and the underlying asymptotic regime. Furthermore, by means of the least squares approximations approach, the adaptive Elastic-Net estimator for stochastic differential equations observed at discrete times is defined. In Section \ref{sec:tp} the theoretical features of the estimator are discussed. In particular we get the oracle properties of the estimator, which are crucial in order to define a reasonable selection procedure. Non-asymptotic bounds, involving under suitable assumptions the parametric dimension, are introduced in Section \ref{sec:nab} for a block-diagonal Elastic-Net estimator. This latter is asymptotically equivalent to the estimator defined in Section \ref{sec:defen}. Furthermore, the bounds represents a first insight into the high-dimensional theory  for the problem addressed in the paper. Section \ref{sec:pe} is devoted to the analysis of the prediction error; in particular the mean absolute error is studied and its non-asymptotic bounds discussed. In Section \ref{sec:sim}-\ref{sec:real}, we assess the performance of the adaptive Elastic-Net estimator, by means a numerical analysis on simulated data and an application of our methodology to well-being real data (comparing Italy and Japan during the Covid outbreak). The optimization path algorithm is briefly described.  All the proofs are collected in the last section.

\section{Definition of Elastic-Net estimator for diffusions sampled at discrete times}\label{sec:defen}

Let  $(\Omega, \mathcal{F}, \mathbf{F}=(\mathcal{F}_t)_{t\geq0}, P)$ be a filtered complete probability space. Let us consider a $d$-dimensional solution process $X:=(X_t)_{t\geq0}$ to the following stochastic differential equation (SDE) \begin{equation}\label{eq:sde}
    \mathrm{d}X_t=b(X_t,\alpha)\mathrm{d}t+\sigma(X_t,\beta)\mathrm{d}W_t, \quad X_0=x_0,\end{equation} where $x_0$ is a deterministic initial point, $b: \mathbb{R}^d\times\Theta_\alpha\to\mathbb{R}^d$ and $\sigma: \mathbb{R}^d\times\Theta_\beta\to\mathbb{R}^d\otimes\mathbb{R}^r$ are Borel known functions (up to $\alpha$ and $\beta$) and $(W_t)_{t\geq0}$ is a $r$-dimensional standard ${\bf F}$-Brownian motion.
    
     We assume that $\alpha\in\Theta_\alpha\subset\mathbb{R}^{\p}$, $\beta\in\Theta_\beta\subset\mathbb{R}^{\q}$, $\p,\q\in\mathbb{N}$, are unknown parameters and $\Theta_\alpha$, $\Theta_\beta$ are compact convex sets.
     In this framework, the parameter of interest is  $\theta:=(\alpha, \beta)^\tr$ and the parametric space is given by $\Theta:=\Theta_\alpha\times\Theta_\beta\subset\mathbb{R}^\m$, where $\m:=\p+\q$.

     The true value of $\theta$ is indicated by $\theta_0:=(\alpha_0,\beta_0)^\tr\in \mathbb{R}^\m$. Furthermore $\theta_0\in\mathrm{Int}(\Theta)$ and $0\in\mathbb{R}^\m$ belongs to $\Theta$. $P:=P_{\theta_0}$ stands for the probability law of $X.$ 

   We assume  that   the stochastic differential equation $X$ represents a sparse parametric model. The sparsity condition of the parametric model is introduced by imposing that some coefficients in $\theta_0$ are exactly zero; i.e. $\p^0:=\left|\{j:\alpha_{0,j}\neq 0\}\right|$,  $\q^0=\left|\{h:\beta_{0,h}\neq 0\}\right|$ and $\m^0:=\p^0+\q^0$.

 Let us recall some notations. Let $\langle A, x^{\otimes 2} \rangle=$ tr$(Ax^{\otimes 2})=x^\tr A x,$ where $x^{\otimes 2}= xx^\tr,$ $x\in\mathbb R^\m$ $A\in \mathbb R^\m\otimes \mathbb R^\m.$ Furthermore the euclidean norm of the vector $x$ is indicated by $|x|$ and $||A||=\sqrt{\tau_{\max}(A A^\tr)}.$ We sometimes adopt the notation $[n]$ to denote the set ${1, 2, \ldots , n}$.

     Hereafter, we suppose that $X$ satisfies the following  regularity conditions.
     \begin{itemize}
\item (Existence and uniqueness) There exists a constant $C$ such that
$$\sup_{\alpha\in\Theta_\alpha}|b(x,\alpha)-b(y,\alpha)|+\sup_{\beta\in\Theta_\beta}||\sigma(x,\beta)-\sigma(y,\beta)|| \leq C|x-y|,\quad x,y\in \mathbb R^d.$$
\item (Non-degeneracy) $\inf_{x,\beta}$ det $\Sigma(x,\beta)>0,$ where $\Sigma(x,\beta):=\sigma\sigma^\tr(x,\beta).$
\item (Smoothness) $b$ and $\sigma$ are smooth functions; i.e. they are continuously differentiable (in both variables) up to some order and their derivatives are uniformly polynomial growth.  
\item (Ergodicity)  $X$ is ergodic; i.e. there exists a unique invariant probability measure $\mu=\mu_{\theta_0}$ such that for any bounded measurable function $g:\mathbb{R}^d\to\mathbb{R}$ $$\frac{1}{T}\int_0^{T} g(X_t)\mathrm{d}t\overset{p}{\longrightarrow}\int_{\mathbb{R}^d}g(x)\mathrm{d}\mu,\quad T\longrightarrow\infty.$$
\item (Moments) $\sup_t E[|X_t|^k]<\infty$ for all $k>0.$
\item (Identifiability) $b(x,\alpha)=b(x,\alpha_0)$ for $\mu$-a.e. $x$ if and only if $\alpha=\alpha_0$ and det($\Sigma(x,\beta))=$ det($\Sigma(x,\beta_0)$) for $\mu$-a.e. $x$ if and only if $\beta=\beta_0.$
     \end{itemize}

\begin{remark}
 The ergodic property representing a kind of law of large numbers is required for the estimation of the parameter $\alpha$ appeared in the drift term. 
 The ergodicity of the process $X$ follows from some mixing conditions. For instance, we assume that there exists a positive constant $a$ such that
 $$\nu_X(u)\leq \frac{e^{-au}}{a},\quad u>0$$
 where
 $$\nu_X(u)=\sup_{t\geq 0}\sup_{\underset{B\in\sigma\{X_r:r\geq t+u\}}{A\in\sigma\{X_r:r\leq t\}}}|P(A\cap B)-P(A)P( B)|.$$
Other conditions are the following ones: if we assume that $\Sigma$ is bounded and there exist positive constants $\lambda_{-},\lambda_{+}$ and $\Lambda$ such that for all $\beta$
$$0<\lambda_{-}\leq \langle \Sigma(x,\beta)x/|x|, x/|x|\rangle\leq \lambda_{+},\quad \frac{\text{Tr}(\Sigma(x,\beta))}{d}\leq \Lambda$$
and for all $\alpha$
$$\langle b(x,\alpha), x/|x|\rangle\leq - r|x|^a,\quad |x|\geq M_0, $$
with $M_0\geq 0, a\geq -1$ and $r>0.$ Under this assumptions the process $X$ is ergodic and the moments condition holds (see \cite{pardoux2001poisson}).
 
 The smoothness conditions for $b$ and $\sigma$ depend on the adopted estimation procedure. For this reason, we avoid to explicit this assumption and refer to the papers where the estimators are analyzed.     
\end{remark}

     The sample path of $X$ is observed at $n + 1$ equidistant discrete times $t_i^n$, such that $t_i^n-t_{i-1}^n=\Delta_n<\infty$ for $i=1,\ldots,n$ with $t_0^n=0$. 
    The collected data are the discrete observations of the sample path of $X$, that we represent by $\mathbf{X}_n:=(X_{t_i^n})_{0\leq i\leq n}$. The main goal of the paper is to estimate $\theta$ and simultaneously select the true subset model from the sample $\mathbf{X}_n$.

     
     The asymptotic scheme adopted in this paper is the following $$n\Delta_n\longrightarrow\infty, n\Delta^2\longrightarrow 0 \text{ \ and \ } \Delta_n\longrightarrow 0$$ as $n\to \infty.$ We have high frequency sampling and long-run data. Actually, it is possible to weak the above asymptotic condition imposing $n\Delta_n^k\longrightarrow 0, k> 2.$ In this setting, some estimators for ergodic diffusions have been proposed in \cite{kess} and \cite{uchida2012adaptive}.

In order to introduce the Elastic-Net estimation methodology for diffusion processes, we take into account the same approach developed in \cite{wang1},  \cite{de2012adaptive}, \cite{suzuki2020penalized} and \cite{de2021regularized}. Let us start by dealing with  a generic contrast function $\mathfrak{L}_n(\theta):=\mathfrak{L}_n(\theta; \mathbf{X}_n)$ such that the estimator  $\tilde{\theta}_n:=\tilde{\theta}_n(\mathbf{X}_n)$ of $\theta$  is given by: 
\begin{align}\label{eq:iniest}
    \tilde{\theta}_n:=(\tilde{\alpha}_n, \tilde{\beta}_n)^\tr \in \arg\min_\theta \mathfrak{L}_n(\theta)
\end{align} 
If $\mathfrak{L}_n(\theta)$ is twice differentiable with respect to $\theta$, it is approximated by Taylor expansion as follows: \begin{align*}
       \mathfrak{L}_n(\theta) & \simeq \mathfrak{L}_n(\tilde{\theta}_n) +\frac{1}{2}\langle \ddot{\mathfrak{L}}_n(\tilde{\theta}_n), (\theta-\tilde{\theta}_n)^{\otimes 2}\rangle
    \end{align*} 
    where $\ddot{\mathfrak{L}}_n$ represents the Hessian matrix of the second derivatives with respect to $\theta$. Therefore, we can minimize $\langle \ddot{\mathfrak{L}}_n(\tilde{\theta}_n), (\theta-\tilde{\theta}_n)^{\otimes 2}\rangle$ instead of $\mathfrak{L}_n(\theta).$ This remark inspires the following definition.

Usually, $\tilde\theta_n$ minimizes a loss function; nevertheless we will not assume the existence of a loss function.     Let $\hat{G}_n$ be a $\m\times\m$ almost surely positive definite symmetric random matrix depending on $n$. We introduce the following objective function: 
     \begin{align} \label{objfunction}
       \mathcal{F}_n(\theta;\tilde{\theta}_n)&=\langle \hat{G}_n, (\theta-\tilde{\theta}_n)^{\otimes 2}\rangle+L_n(\theta)+R_n(\theta)\notag\\
       &=| \hat{G}_n^{1/2} (\theta-\tilde{\theta}_n)|^2+L_n(\theta)+R_n(\theta)
    \end{align} where $ \hat{G}_n^{1/2}$ is the symmmetric matrix such that $\hat{G}_n^{1/2}\hat{G}_n^{1/2}=\hat{G}_n$ and $L_n(\theta)$ and $R_n(\theta)$ are function of the parameter of interest and represent the penalization term of Elastic-Net in the objective function. In particular, $L_n(\theta)$ represents the LASSO component of the penalization term:
    \begin{equation}\label{eq:lasso-pen}
        L_n(\theta)=|\alpha|_{1,\kappa_n}+ |\beta|_{1,\pi_n}
    \end{equation} involving the weighted $\ell_1$ norms $$|\alpha|_{1,\kappa_n}=\sum_{j=1}^{\p}\kappa_{n,j}|\alpha_j|$$ and $$|\beta|_{1,\pi_n}=\sum_{h=1}^{\q} \pi_{n,h}|\beta_h|,$$ where $\kappa_{n,j}, j=1,...,\p,$ and $\pi_{n,h},h=1,...,\q,$ are sequence of real positive random variable depending on the data and representing the amount of shrinkage for $\alpha_j$ and $\beta_h$, respectively. 
    \begin{remark}
            Usually, the adaptive weights are chosen as follows (\cite{zou2006adaptive})
    \begin{align}
\kappa_{n,j}&=\frac{\lambda_{1,n}}{|\tilde\alpha_{n,j}|^{\delta_1}},\quad j=1,\ldots,\p, \label{eq:adap-w1}\\ 
\pi_{n,h}&=\frac{\gamma_{1,n}}{|\tilde\beta_{n,h}|^{\delta_2}},\quad h=1,\ldots,\q,\label{eq:adap-w2}
    \end{align}
    where $\lambda_{1,n}$ and $\gamma_{1,n}$ are suitable sequences of positive numbers and $\delta_1,\delta_2>0$. The idea is that $\tilde\theta_n$ is consistent and then this choice allows to assign  higher weights to the zero coefficients. 
    \end{remark}
    
    The Ridge component of the Elastic-Net penalty is:
    \begin{equation}\label{eq:ridge-pen}
    R_n(\theta)=\lambda_{2,n}|\alpha|^2+\gamma_{2,n}|\beta|^2
    \end{equation} where $\lambda_{2,n}$ and $\gamma_{2,n}$ are sequence of positive real values, depending on $n$.

We observe that if $\tilde\theta_n$ coincides with \eqref{eq:iniest}, by setting $\hat G_n=\ddot{\mathfrak{L}}_n(\tilde{\theta}_n),$ the first term in $\mathcal{F}_n$ is exactly the least squares approximation  of the loss function $\mathfrak{L}_n(\theta).$

     The Elastic-Net estimator $\hat{\theta}_n:=\hat{\theta}_n(\mathbf{X}_n):\mathbb{R}^{(n+1)\times d}\longrightarrow \overline{\Theta}$ is defined minimizing the  objective function \eqref{objfunction}; i.e. \begin{equation}  \label{eq:estenet}
        \hat{\theta}_n:=(\hat{\alpha}_n,\hat{\beta}_n)^\tr\in \arg\min_{\theta\in\overline{\Theta}}\mathcal{F}_n(\theta;\tilde{\theta}_n).
    \end{equation}

The primary advantage of our approach lies in addressing a penalized optimization problem that involves a convex cost function. This characteristic enhances the feasibility of the estimation procedure both theoretically and computationally.

\begin{remark}\label{rem:enetlasso}
The adaptive LASSO estimator $\hat\theta_n$(LASSO) for stochastic differential equations studied in \cite{de2012adaptive} is defined as follows
$$\hat\theta_n\text{(LASSO)}\in\arg\min_{\theta}\left\{| \hat{G}_n^{1/2} (\theta-\tilde{\theta}_n)|^2+L_n(\theta)\right\}$$
or equivalently
\begin{equation}\label{eq:lasso}
\hat\theta_n\text{(LASSO)}\in\arg\min_{\theta}\left\{\langle \hat G_n,\theta^{\otimes 2}\rangle-2\tilde\theta_n^\tr\hat G_n\theta +L_n(\theta)\right\}.
\end{equation}
Clearly, the stationary points   of $\mathcal{F}_n(\theta;\tilde{\theta}_n)$ coincides with those of the function
$$\langle \hat G_n+C(\lambda_{2,n},\gamma_{2,n}),\theta^{\otimes 2}\rangle-2\tilde\theta_n^\tr\hat G_n\theta +L_n(\theta),$$
where $C(\lambda_{2,n},\gamma_{2,n}):=\text{diag}(\lambda_{2,n} {\bf I}_{\p},\gamma_{2,n}{\bf I}_{\q}).$ Therefore the Elastic-Net estimator becomes
\begin{equation*}
\hat\theta_n\in\arg\min_{\theta}\left\{\langle \hat G_n+C(\lambda_{2,n},\gamma_{2,n}),\theta^{\otimes 2}\rangle-2\tilde\theta_n^\tr\hat G_n\theta +L_n(\theta)\right\};
\end{equation*}
i.e. $\hat\theta_n$ is a stabilized version of \eqref{eq:lasso} where the Ridge adjustment is taken into account  by the perturbation of $ \hat G_n$ with the rates $\lambda_{2,n}$ and $\gamma_{2,n}$. 
\end{remark}

\section{Theoretical properties of Elastic-Net estimator}\label{sec:tp}

Without loss of generality, we set $\alpha_{0,j}\neq0$, $j=1,\ldots,\p^0,$ and $\beta_{0,h}\neq 0$, $h=1,\ldots,\q^0$.  Let $A_n:=\mathrm{diag}\left(\frac{1}{\sqrt{n\Delta_n}}\mathbf{I}_{\p}, \frac{1}{\sqrt{n}}\mathbf{I}_{\q}\right),$ where $\mathbf{I}_{\g}$ stands for the identity matrix of size $\g$.

We introduce the following assumptions.

\begin{itemize}
    \item[P1.] The initial estimator $\tilde{\theta}_n:=\left(\tilde{\alpha}_n,\tilde{\beta}_n  \right)^\tr\,:\mathbb{R}^{(n+1)\times d}\longrightarrow \overline{\Theta}$ of $\theta$ is consistent in a mixed-rates asymptotic regime, $\tilde{\alpha}_n$ is $\sqrt{n\Delta_n}$-consistent while $\tilde{\beta}_n$ is $\sqrt{n}$-consistent: $$\left(\sqrt{n\Delta_n}(\tilde{\alpha}_n-\alpha_0), \sqrt{n}(\tilde{\beta}_n-\beta_0)\right)^\tr = O_p(1)$$
    
    
    \item[P2.] $\tilde{\theta}_n$ is asymptotically normal; i.e. $$\left((\sqrt{n\Delta_n}(\tilde{\alpha}_n-\alpha_0), \, \sqrt{n}(\tilde{\beta}_n-\beta_0)\right)^\tr \overset{d}{\longrightarrow} N_\m\left(0,\Gamma(\theta_0)^{-1}\right),$$ where $\Gamma(\theta_0):=\mathrm{diag}\left(\Gamma^{^{\alpha\alpha}}(\theta_0), \Gamma{^{\beta\beta}}(\beta_0)\right)$ is the asymptotic Fisher information and \begin{align*}
        \Gamma^{^{\alpha\alpha}}(\theta_0) & := \left[\int_{\mathbb{R}^d} \partial_{\alpha_i} b^\tr(x,\alpha_0) \Sigma^{-1}(x,\beta_0) \partial_{\alpha_j} b(x, \alpha_0)\mu(\mathrm{d}x)\right]_{i,j=1}^\p\\
        \Gamma^{^{\beta\beta}}( \beta_0) & := \left[\frac{1}{2}\int_{\mathbb{R}^d} \mathrm{tr}\left[(\partial_{\beta_i} \Sigma) \Sigma^{-1}(\partial_{\beta_j}\Sigma)\Sigma^{-1}(x, \beta_0)\right]\mu(\mathrm{d}x)\right]_{i,j=1}^\q.
    \end{align*}
    
    \item[P3.] Let $G$ be a $\m\times\m$ matrix assumed to be symmetric and positive definite and such that: $$\hat{D}_n:=A_n\hat{G}_nA_n \overset{p}{\longrightarrow}G.$$
    \item[P4.]  There exist two positive definite symmetric random matrices $G^{\alpha\alpha}\in\mathbb R^\p\otimes \mathbb R^\p$ and $G^{\beta\beta}\in\mathbb R^\q\otimes \mathbb R^\q$ such that $$\hat{D}_n\stackrel{p}{\longrightarrow}G:= \text{diag}(G^{\alpha\alpha},G^{\beta\beta}).$$
\end{itemize}

\begin{remark}
The conditions P1 and P2 reveal the mixed-rates asymptotic nature of a good estimation procedure for ergodic diffusion processes.  This is the main reason leading to introduce in the penalty terms $L_n$ and $R_n$ two different norms for each group of parameters. 
\end{remark}

\begin{remark}\label{rem:ini-ql}
The  maximum quasi-likelihood  estimator, indicated by $\tilde\theta_n^{\text{(QL)}},$ is obtained by maximizing the contrast function
given by the quasi-log-likelihood function \begin{align}\label{qlik}
\ell_n(\theta)
&:=-\frac12\sum_{i=1}^n\left\{\log\text{det}(\Sigma( X_{t_{i-1}^n},\beta))
+\frac{1}{\Delta_n}\langle \Sigma^{-1}( X_{t_{i-1}^n},\beta),(X_{t_i^n}-X_{t_{i-1}^n}-\Delta_n b( X_{t_{i-1}^n},\alpha))^{\otimes 2}\rangle\right\};
\end{align}
i.e. $\tilde\theta_n^{\text{(QL)}}\in\arg\max_\theta \ell_n(\theta).$
The terms appearing in the sum \eqref{qlik} 
represent a local-Gaussian approximation of the transition density of $X$ between two observations $X_{t_{i-1}^n}$ and $X_{t_{i}^n},$ arising from the Euler-Maruyama discretization scheme (see \cite{kloeden1992}). This approach was originally introduced in \cite{florens1989approximate} with $\sigma(x,\beta)=\beta$. Therefore we can assume that $\mathfrak L_n(\theta)=-\ell_n(\theta)$ and 
a possible choice of $\hat G_n$ is the Hessian matrix $-\ddot \ell_n(\tilde\theta_n^{\text{(QL)}})$ evaluated in $\tilde\theta_n^{\text{(QL)}}$. Under some mild conditions the estimator  $\tilde\theta_n^{\text{(QL)}}$ and $\ddot \ell_n(\tilde\theta_n^{\text{(QL)}})$ behave properly and  the properties P1, P2 and P3 are satisfied. In particular 
$$\hat{D}_n=A_n (-\ddot\ell_n(\tilde\theta_n^{\text{(QL)}})) A_n\overset{p}{\longrightarrow}\Gamma(\theta_0)$$
(actually uniformly in $\theta$).
The maximum quasi-likelihood estimation for the ergodic stochastic differential equations has been developed, for instance, in \cite{yoshida2011polynomial} by means a polynomial-type inequality for suitable statistical random fields.   

Other estimators for ergodic diffusions satisfying the asymptotic properties P1, P2 and P3 appeared in literature. For instance, the quasi-Bayesian estimator is studied in \cite{yoshida2011polynomial}, while the hybrid multistep estimator has been introduced in \cite{kamatani2015hybrid}. An approach based on the approximate martingale estimating functions is discussed in \cite{sorensen2024efficient}. If $n\Delta_n^k\longrightarrow 0, k\geq  2,$   estimators defined by local-Gaussian  contrast functions involving higher-order expansions of the conditional moments of $X_t$, have been proposed in \cite{kess} and \cite{uchida2012adaptive}.
\end{remark}

Let us introduce $a_{n}:=\max\{\kappa_{n,j};j\leq \p^0\}$, $b_{n}:=\min\{\kappa_{n,j};j> \p^0\}$, $c_{n}:=\max\{\pi_{n,h};h\leq \q^0\}$ and $d_{n}:=\min\{\pi_{n,h};h> \q^0\}$.    We introduce the following conditions.

\begin{itemize}
    \item[A1.] $\frac{a_{n}}{\sqrt{n\Delta_n}}=O_p(1)$ and $\frac{c_{n}}{\sqrt{n}}=O_p(1)$.
    \item[A2.] $\frac{\lambda_{2,n}}{\sqrt{n\Delta_n}} = O(1)$ and $\frac{\gamma_{2,n}}{\sqrt{n}} = O(1)$,
    \item[A3.] $\frac{a_n}{\sqrt{n\Delta_n}}=o_p(1)$ and $\frac{c_{n}}{\sqrt{n}}=o_p(1)$.
    \item[A4.] $\frac{b_n}{\sqrt{n\Delta_n}}\overset{p}{\longrightarrow}\infty$ and $\frac{d_n}{\sqrt{n}}\overset{p}{\longrightarrow}\infty.$\footnote{A sequence of random variables $X_n$ converges to $\infty$ in probability, we write $X_n\stackrel{p}{\to}\infty$ if $P(X_n > K)\to 1$ for every $K>0$}.
\end{itemize}

We denote $\alpha_\star:=(\alpha_1, \dots,\alpha_{\p^0})^\tr$, $\beta_\star:=(\beta_1, \dots,\beta_{\q^0})^\tr$ and  $\alpha_\bullet:=(\alpha_{\p^0+1}, \dots,\alpha_{\p})^\tr$, $\beta_\bullet:=(\beta_{\q^0+1}, \dots,\beta_{\q})^\tr$. For the discussion of the next result we need to introduce the following notation on a block matrix $M\in\mathbb R^u\otimes \mathbb R^v,$ where $u,v\in\{\p,\q\},$   
	 $$M =\left(  \begin{matrix} 
	      M_{\star\star} &   M_{\star\bullet} \\
	        M_{\bullet\star} &   M_{\bullet\bullet} \\
	   \end{matrix}	\right),$$  
	   where for $u^0,v^0\in\{\p^0,\q^0\},$ 
	   \begin{itemize}
	   \item  $M_{\star\star}=(m_{ij})_{1\leq i\leq u^0,1\leq j\leq v^0}$ is a $u^0\times v^0$ matrix;
	   \item $M_{\star\bullet}=(m_{ij})_{1\leq i\leq u^0,v^0< j\leq v},$ is a $u^0\times (v-v^0)$ matrix;
	   \item $M_{\bullet\star}=(m_{ij})_{u^0< i\leq u ,1\leq j\leq v^0}$ is a  $(u-u^0)\times v^0$ matrix;
	   \item  $M_{\bullet\bullet}=(m_{ij})_{u^0< i\leq u,v^0<j\leq v}$ is a  $(u-u^0)\times(v-v^0)$ matrix.
	   \end{itemize}
    Furthermore, under P4 we introduce the following $\m^0\times \m$ matrix
 	   $$\mathfrak G:= \left(  \begin{matrix} 
	          \mathfrak G_\alpha&0\\
	        0 &  \mathfrak G_\beta 
	      \end{matrix}\right),$$
    	      where $  \mathfrak G_\alpha:=({\bf I}_{\p^0}\,\,\, ( G_{\star\star}^{\alpha\alpha})^{-1} G^{\alpha\alpha}_{\star\bullet})$ and  $  \mathfrak G_\beta:=({\bf I}_{\q^0}\,\,\, ( G_{\star\star}^{\beta\beta})^{-1} G^{\beta\beta}_{\star\bullet}).$ 

Now, we are able to prove the crucial asymptotic Oracle properties for the estimator \eqref{eq:estenet}. 
\begin{theorem}[Oracle properties]\label{thm:asoracle} The adaptive Elastic-Net estimator $\hat\theta_n$ satisfying the following properties.
\begin{itemize}
    \item[i)] (Consistency) Let us assume that P1, P3, A1 and A2 hold. Then $$A_n^{-1}\left(\hat{\theta}_n-\theta_0\right)=O_p(1).$$
    
  \item[(ii)] (Selection consistency) Under the assumptions P1, P3, A1, A2 and A4, we have that:$$P(\hat{\alpha}_{n\bullet}=0)\longrightarrow1 \text{ \ and \  }P(\hat{\beta}_{n\bullet}=0)\longrightarrow1$$ as $n\longrightarrow\infty$. 

\item[(iii)] (Asymptotic normality)
Let us assume that P1, P2, P4, A3 and A4 are fulfilled.  Then $$\left(\sqrt{n\Delta_n}(\hat{\alpha}_n-\alpha_0)_\star, \sqrt{n}(\hat{\beta}_n-\beta_0)_\star\right)^\tr\overset{d}{\longrightarrow}N_{\m^0}\left(0,\mathfrak G\Gamma(\theta_0)^{-1}\mathfrak G \right)$$ as $n\longrightarrow \infty$. Furthermore if $G=\Gamma(\theta_0)$
$$ \mathfrak G\Gamma(\theta_0)^{-1}\mathfrak G =\mathrm{diag}\left(\Gamma^{^{\alpha\alpha}}_{\star\star}(\theta_0)^{-1}, \Gamma^{^{\beta\beta}}_{\star\star}(\beta_0)^{-1} \right).$$
 \end{itemize}   
\end{theorem}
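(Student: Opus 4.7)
The overall strategy exploits that $\mathcal{F}_n(\,\cdot\,;\tilde\theta_n)$ is a convex function of $\theta$ (a quadratic form plus an $\ell_1$ and an $\ell_2$ part), so arguments based on rescaled epi-convergence or the ``argmax of a convex random function'' technique apply. Throughout I would reparametrize via $u=A_n^{-1}(\theta-\theta_0)$, so $\theta=\theta_0+A_n u$, and study the increment
\[
\Psi_n(u):=\mathcal{F}_n(\theta_0+A_n u;\tilde\theta_n)-\mathcal{F}_n(\theta_0;\tilde\theta_n).
\]
Expanding the quadratic part of $\mathcal{F}_n$ using $\hat D_n=A_n\hat G_n A_n$ gives
\[
\langle \hat G_n,(\theta-\tilde\theta_n)^{\otimes 2}\rangle-\langle \hat G_n,(\theta_0-\tilde\theta_n)^{\otimes 2}\rangle
=u^\tr \hat D_n u+2u^\tr \hat D_n A_n^{-1}(\theta_0-\tilde\theta_n),
\]
whose second term is $O_p(|u|)$ by P1 and P3.

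\textbf{Part (i).} For the LASSO penalty, the triangle inequality gives
\[
L_n(\theta)-L_n(\theta_0)\ge -\frac{a_n}{\sqrt{n\Delta_n}}\sum_{j\le\p^0}|u_j|-\frac{c_n}{\sqrt{n}}\sum_{h\le\q^0}|u_{\p+h}|,
\]
since the terms for indices in $\bullet$ are non-negative; under A1 this is $-O_p(|u|)$. For the Ridge part,
\[
R_n(\theta)-R_n(\theta_0)=\tfrac{2\lambda_{2,n}}{\sqrt{n\Delta_n}}\alpha_0^\tr u_\alpha+\tfrac{\lambda_{2,n}}{n\Delta_n}|u_\alpha|^2+\text{analogous }\beta\text{ terms},
\]
which under A2 contributes $O(|u|)+o(|u|^2)$. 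Combining these with $u^\tr \hat D_n u\ge \lambda_{\min}(G)|u|^2/2$ on an event of arbitrarily large probability (using P3), I obtain $\Psi_n(u)\ge c|u|^2-O_p(|u|)$ for $|u|=C$ and $C$ large. Convexity of $\Psi_n$ then forces the minimizer $\hat u_n:=A_n^{-1}(\hat\theta_n-\theta_0)$ to lie in $\{|u|\le C\}$ with probability $\ge 1-\varepsilon$, proving consistency.

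\textbf{Part (ii).} Use the KKT subgradient optimality at $\hat\theta_n$: for any $j>\p^0$ with $\hat\alpha_{n,j}\neq 0$,
\[
2[\hat G_n(\hat\theta_n-\tilde\theta_n)]_j+\mathrm{sgn}(\hat\alpha_{n,j})\,\kappa_{n,j}+2\lambda_{2,n}\hat\alpha_{n,j}=0.
\]
Writing $\hat\theta_n-\tilde\theta_n=A_n\bigl(A_n^{-1}(\hat\theta_n-\theta_0)+A_n^{-1}(\theta_0-\tilde\theta_n)\bigr)$ and using part (i), P1 and P3, the first term is of order $\sqrt{n\Delta_n}\,O_p(1)$; by A2 and part (i), $\lambda_{2,n}\hat\alpha_{n,j}=O_p(\sqrt{n\Delta_n})$. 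But $\kappa_{n,j}\ge b_n$ with $b_n/\sqrt{n\Delta_n}\stackrel{p}{\to}\infty$ by A4, so the KKT identity cannot hold, i.e.\ $P(\hat\alpha_{n,j}\neq 0)\to 0$. The same argument with $c_n,\pi_{n,h},\gamma_{2,n}$ gives $P(\hat\beta_{n,h}=0)\to 1$ for $h>\q^0$.

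\textbf{Part (iii).} On the event $E_n:=\{\hat\alpha_{n\bullet}=0,\ \hat\beta_{n\bullet}=0\}$, which has probability $\to 1$ by (ii), $\hat\theta_{n\star}$ solves the smooth stationary equation (coordinates in $\star$)
\[
\hat G_{n,\star\star}(\hat\theta_{n\star}-\tilde\theta_{n\star})-\hat G_{n,\star\bullet}\tilde\theta_{n\bullet}+\tfrac12(\partial L_n+\partial R_n)_\star=0.
\]
Multiplying by $A_{n,\star}$ and using $A_n\hat G_n A_n=\hat D_n\stackrel{p}{\to}G$ (P3, P4), I would rewrite this as
\[
A_{n,\star}^{-1}(\hat\theta_{n\star}-\theta_{0\star})=A_{n,\star}^{-1}(\tilde\theta_{n\star}-\theta_{0\star})+(\hat D_{n,\star\star})^{-1}\hat D_{n,\star\bullet}A_{n,\bullet}^{-1}(\tilde\theta_{n\bullet}-0)+o_p(1),
\]
where the $o_p(1)$ collects the rescaled penalty gradients: for $j\le\p^0$, sgn$(\hat\alpha_{n,j})$ is eventually constant (by consistency), $\kappa_{n,j}/\sqrt{n\Delta_n}\le a_n/\sqrt{n\Delta_n}=o_p(1)$ by A3, and the Ridge contribution $\lambda_{2,n}\hat\alpha_{n,j}/\sqrt{n\Delta_n}$ is handled analogously (with the analogue for $\beta$ via $c_n,\gamma_{2,n},\sqrt n$). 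By P4, the cross blocks $G^{\alpha\beta}$ vanish, leaving exactly the matrix
\[
\mathfrak G=\mathrm{diag}\bigl((\mathbf I_{\p^0},\,(G^{\alpha\alpha}_{\star\star})^{-1}G^{\alpha\alpha}_{\star\bullet}),\ (\mathbf I_{\q^0},\,(G^{\beta\beta}_{\star\star})^{-1}G^{\beta\beta}_{\star\bullet})\bigr)
\]
acting on the joint limit. Finally, Slutsky combined with P2 yields the stated $N_{\m^0}(0,\mathfrak G\,\Gamma(\theta_0)^{-1}\mathfrak G^\tr)$ limit; the identity $\mathfrak G\,\Gamma(\theta_0)^{-1}\mathfrak G^\tr=\mathrm{diag}(\Gamma^{\alpha\alpha}_{\star\star}(\theta_0)^{-1},\Gamma^{\beta\beta}_{\star\star}(\beta_0)^{-1})$ when $G=\Gamma(\theta_0)$ follows from the standard Schur complement identity for the top-left block of the inverse of a block matrix.

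\textbf{Main obstacle.} The trickiest step is (iii): one has to simultaneously control (a) the bias induced by $\tilde\theta_{n\bullet}\neq 0$ propagating through the off-diagonal block $\hat G_{n,\star\bullet}$ (which produces the nontrivial $\mathfrak G$), and (b) the negligibility of both adaptive LASSO and Ridge gradient pieces at the correct rate. Item (a) requires the full block-diagonal structure of $G$ in P4, while (b) requires A3 together with A2; if one worries that A2 only yields $O(1)$ (not $o(1)$) on the Ridge side, the cleanest route is to note that on the active indices $\hat\alpha_{n,j}\to\alpha_{0,j}$ is bounded, so the Ridge gradient is exactly $2\lambda_{2,n}\alpha_{0,j}+o_p(\lambda_{2,n})$, whose rescaling by $1/\sqrt{n\Delta_n}$ yields a deterministic shift absorbed by the $\sqrt{n\Delta_n}$-consistency rate together with the Slutsky step.
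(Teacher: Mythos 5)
Your proposal follows essentially the same route as the paper's proof in all three parts: for (i) the paper manipulates the inequality $\mathcal F_n(\hat\theta,\tilde\theta)\le\mathcal F_n(\theta_0,\tilde\theta)$ directly into the explicit bound \eqref{consistency} rather than running the Hjort--Pollard ball argument on $\Psi_n$, but the two are interchangeable here (the paper's version has the side benefit that \eqref{consistency} is reused verbatim in the proof of Theorem \ref{thm:bound}); for (ii) the KKT/rate-comparison argument is identical, except that you omit the boundary event --- the stationarity condition is only valid when $\hat\alpha_n\notin\partial\Theta_\alpha$, and the paper separately shows $P(\hat\alpha_n\in\partial\Theta_\alpha)\to0$ before summing over $j>\p^0$; for (iii) the restriction to the correct-support event, the explicit solution of the $\star$-block stationarity equation, the $o_p(1)$ bookkeeping of the cross-block and penalty-gradient terms, and the final Slutsky/Schur-complement step all coincide with the paper.

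The one genuine misstep is in your closing remark on the Ridge gradient in (iii). The relevant term is $\sqrt{n\Delta_n}(\hat G^{\alpha\alpha}_{\star\star})^{-1}U(\hat\alpha)=(\hat D^{\alpha\alpha}_{\star\star})^{-1}\tfrac{\lambda_{2,n}}{\sqrt{n\Delta_n}}\hat\alpha_{n\star}$, and since $\hat\alpha_{n\star}\stackrel{p}{\to}\alpha_{0\star}\neq0$, this is $o_p(1)$ if and only if $\lambda_{2,n}/\sqrt{n\Delta_n}\to0$. Your fallback claim --- that when $\lambda_{2,n}/\sqrt{n\Delta_n}$ is merely $O(1)$ the resulting deterministic shift is ``absorbed by the $\sqrt{n\Delta_n}$-consistency rate together with the Slutsky step'' --- is false: a non-vanishing shift $-2(G^{\alpha\alpha}_{\star\star})^{-1}c\,\alpha_{0\star}$ with $c=\lim\lambda_{2,n}/\sqrt{n\Delta_n}>0$ survives into the limit and produces a non-centered Gaussian, contradicting the stated conclusion. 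The correct resolution (which is what the paper's $o_p(1)$ assertion for $n\Delta_n(\hat G^{\alpha\alpha}_{\star\star})^{-1}\tfrac{1}{\sqrt{n\Delta_n}}U(\hat\alpha)\bbone_{B^\alpha_n}$ implicitly uses) is to require $\lambda_{2,n}=o(\sqrt{n\Delta_n})$ and $\gamma_{2,n}=o(\sqrt n)$ for the asymptotic normality; there is no way to rescue a centered limit under the weaker $O(1)$ condition.
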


We recall that a sequence of random variables $(X_n)_{n\geq 1}$ is uniformly $L^r$-bounded if and only if $\sup_{n\geq1}\mathbb{E}\left[|X_n|^r\right]<\infty,$ for all $r\geq 1.$ 

\begin{theorem}[Uniform $L^r$-boundness]\label{thm:bound}
By assuming A2 and $(\hat{D}_{n})_{n\geq 1},$ $(\hat{D}_{n}^{-1})_{n\geq 1},$ $(A_n^{-1}(\tilde{\theta}_n-\theta_0))_{n\geq 1}, $ $(\frac{a_n}{\sqrt{n\Delta_n}})_{n\geq 1},$   $(\frac{c_n}{\sqrt n})_{n\geq 1}$uniformly $L^r$-bounded, then  

\begin{equation}
    \sup_{n\geq1}\mathbb{E}\left[|A_n^{-1}(\hat{\theta}_n-\theta_0)|^r\right]<\infty, \quad r\geq 1.    \end{equation} 
\end{theorem}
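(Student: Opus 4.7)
The plan is to exploit the defining minimality of $\hat\theta_n$ through the basic inequality $\mathcal{F}_n(\hat\theta_n;\tilde\theta_n)\le\mathcal{F}_n(\theta_0;\tilde\theta_n)$ and to recast it as a scalar quadratic inequality in $|\hat u|:=|A_n^{-1}(\hat\theta_n-\theta_0)|$ whose coefficients are all uniformly $L^r$-bounded by hypothesis.

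First I would reparametrize. Setting $\hat u=A_n^{-1}(\hat\theta_n-\theta_0)$, $\tilde u=A_n^{-1}(\tilde\theta_n-\theta_0)$ and using $A_n\hat G_nA_n=\hat D_n$, the quadratic part of $\mathcal{F}_n(\theta;\tilde\theta_n)$ rewrites as $(A_n^{-1}(\theta-\theta_0)-\tilde u)^\tr\hat D_n(A_n^{-1}(\theta-\theta_0)-\tilde u)$. Evaluating the optimality inequality at $\theta=\hat\theta_n$ and at $\theta=\theta_0$ yields
\begin{equation*}
(\hat u-\tilde u)^\tr\hat D_n(\hat u-\tilde u)\le\tilde u^\tr\hat D_n\tilde u+\bigl[L_n(\theta_0)-L_n(\hat\theta_n)\bigr]+\bigl[R_n(\theta_0)-R_n(\hat\theta_n)\bigr].
\end{equation*}

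The core of the argument is to bound the two penalty differences by multiples of $|\hat u|$ with coefficients controlled by the hypotheses. For the adaptive LASSO, the sparsity of $\theta_0$ kills the $\bullet$-blocks of $L_n(\theta_0)$; combined with the reverse triangle inequality $\bigl||x|-|y|\bigr|\le|x-y|$, with the definitions of $a_n$ and $c_n$, with the dropping of the non-negative terms $\sum_{j>\p^0}\kappa_{n,j}|\hat\alpha_{n,j}|$ and $\sum_{h>\q^0}\pi_{n,h}|\hat\beta_{n,h}|$, and with $|\cdot|_1\le\sqrt k\,|\cdot|$ on $\mathbb R^k$, one obtains
\begin{equation*}
L_n(\theta_0)-L_n(\hat\theta_n)\le a_n\sqrt{\p^0}\,|\hat\alpha_n-\alpha_0|+c_n\sqrt{\q^0}\,|\hat\beta_n-\beta_0|.
\end{equation*}
For the ridge part, the factorization $|\alpha_0|^2-|\hat\alpha_n|^2=(|\alpha_0|-|\hat\alpha_n|)(|\alpha_0|+|\hat\alpha_n|)$ together with the compactness of $\Theta_\alpha,\Theta_\beta$ bounds the sums $|\alpha_0|+|\hat\alpha_n|$ and $|\beta_0|+|\hat\beta_n|$ by deterministic constants $C_\alpha,C_\beta$, producing the analogous estimate with weights $\lambda_{2,n}C_\alpha$ and $\gamma_{2,n}C_\beta$. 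Using the block structure of $A_n$, $\sqrt{n\Delta_n}|\hat\alpha_n-\alpha_0|\le|\hat u|$ and $\sqrt n|\hat\beta_n-\beta_0|\le|\hat u|$, so the total penalty difference is majorized by $(Y_n+Z_n)|\hat u|$ with
\begin{equation*}
Y_n=\frac{a_n\sqrt{\p^0}+\lambda_{2,n}C_\alpha}{\sqrt{n\Delta_n}},\qquad Z_n=\frac{c_n\sqrt{\q^0}+\gamma_{2,n}C_\beta}{\sqrt n},
\end{equation*}
both uniformly $L^r$-bounded thanks to A2 and the standing hypotheses.

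To conclude, I would use the positive-definiteness of $\hat D_n$ in the form $x^\tr\hat D_nx\ge|x|^2/\|\hat D_n^{-1}\|$, together with $\tilde u^\tr\hat D_n\tilde u\le\|\hat D_n\|\cdot|\tilde u|^2$ and $|\hat u|^2\le 2|\hat u-\tilde u|^2+2|\tilde u|^2$, which reduces everything to the scalar inequality
\begin{equation*}
|\hat u|^2\le a\,|\hat u|+b,\qquad a=2\|\hat D_n^{-1}\|(Y_n+Z_n),\qquad b=2\bigl(1+\|\hat D_n^{-1}\|\cdot\|\hat D_n\|\bigr)|\tilde u|^2.
\end{equation*}
Solving this quadratic gives $|\hat u|\le a+\sqrt b$, and the conclusion follows by taking $r$-th moments: the bound $(a+\sqrt b)^r\le 2^{r-1}(a^r+b^{r/2})$ and repeated applications of H\"older's inequality split the products into factors of $\|\hat D_n^{-1}\|$, $\|\hat D_n\|$, $Y_n$, $Z_n$ and $|\tilde u|$, each uniformly $L^{kr}$-bounded for every $k$. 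The delicate step is the treatment of the LASSO penalty: one must use both the sparsity of $\theta_0$ \emph{and} the $A_n^{-1}$-scaling so that the only random weights that need to be controlled are the normalized quantities $a_n/\sqrt{n\Delta_n}$ and $c_n/\sqrt n$ assumed in the hypotheses, rather than $a_n$ or $c_n$ themselves, whose growth would otherwise overwhelm the estimate.
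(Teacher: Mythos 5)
Your argument is correct and follows essentially the same route as the paper: both exploit the minimality of $\hat\theta_n$ to obtain a deterministic bound on $|A_n^{-1}(\hat\theta_n-\theta_0)|$ in terms of $\lVert \hat D_n\rVert$, $\lVert \hat D_n^{-1}\rVert$, $|A_n^{-1}(\tilde\theta_n-\theta_0)|$ and the normalized penalty weights $a_n/\sqrt{n\Delta_n}$, $c_n/\sqrt n$, $\lambda_{2,n}/\sqrt{n\Delta_n}$, $\gamma_{2,n}/\sqrt n$, and then take $r$-th moments using convexity of $t\mapsto t^r$ together with Cauchy--Schwarz/H\"older. The paper simply reuses the inequality \eqref{consistency} already derived in the consistency proof (where the ridge term is handled by the convexity bound $|\hat\alpha|^2-|\alpha_0|^2\ge 2\alpha_0^\tr(\hat\alpha-\alpha_0)$ rather than your compactness argument), whereas you re-derive an equivalent bound by solving the scalar quadratic $|\hat u|^2\le a|\hat u|+b$; these differences are cosmetic.
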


\begin{remark}\label{rem:weights}
  The assumptions required in Theorem \ref{thm:bound} are not  strong since, for instance, the maximum quasi-likelihood and Bayesian estimators are uniformly $L^r$-bounded (see  \cite{yoshida2011polynomial}). Furthermore, we can chose the weights properly, in order to get $L^r$-bounded coefficients $a_n$ and $c_n;$  for instance by setting
  $$\kappa_{n,j}=\frac{\lambda_{1}}{|\tilde\alpha_{n,j}|^{\delta_1}+a_n}, \quad \pi_{n,h}=\frac{\gamma_{1}}{|\tilde\beta_{n,h}|^{\delta_2}+b_n}$$
  or
  $$\kappa_{n,j}=\frac{\lambda_{1}}{(|\tilde\alpha_{n,j}|\vee a_n)^{\delta_1}}, \quad \pi_{n,h}=\frac{\gamma_{1}}{(|\tilde\beta_{n,h}|\vee b_n)^{\delta_2}}$$
  with $a_n,b_n>0$ and such that $a_n,b_n\downarrow 0$ and $\lambda_{1}, \gamma_{1}>0.$
  
  The $L^r$ estimates are useful, for example, in the in asymptotic decision theory,  where the estimator is efficient if it attains the Hajek's minimax bound, or for the validation of the information criterion.
\end{remark}

\section{Block-diagonal estimator and non-asymptotic bounds}\label{sec:nab}
Before proceeding with the results in this section, dealing with the non-asymptotic properties of the estimator, we state the following assumptions which we need in order to control the finite-sample behaviour of the contrast function and information matrix.
Denote by $\partial_\theta \ell_n$
and  $\partial^2_{\theta, \theta} \ell_n$ the gradient and Hessian matrix of $\ell_n$, respectively, and by $\partial_\theta \overline{\ell_n} = A_n \partial_\theta \ell_n$, 
$\partial^2_{\theta \theta} \overline{\ell_n} = A_n
\partial^2_{\theta \theta} \ell_n A_n$ their scaled version.

We need the following two conditions.

\begin{enumerate}[]
    \item[A5$(r)$.]
    (\emph{Regular contrast}) 
    The negative quasi-likelihood and its $\theta$-derivatives $\ell_n$,  $\partial_\theta \ell_n$,  $\partial_{\theta\theta}^2\ell_n$ can be extended continuously
    to the boundary of $\Theta$ and, for $r>0$, there exist square integrable random variables $\xi_n$ with $\mathbb E\xi_n^2 \leq J$ and $\mu > 0$ s.t. 
    \begin{align*}
      &(i)\quad  \max_{i \in [\p + \q]} \sup_{\theta:| \theta_0 - \theta| \leq r}|
        \partial_{\theta_i} \overline{\ell_n}(\theta) | \leq \xi_n,
\\
  &(ii) \quad \inf_{v\in \mathbb R^{\p+\q}:|v|=1} \inf_{\theta: |\theta - \tilde \theta_n| \leq  r}  v^\top \partial^2_{\theta\theta } \overline{\ell_n}(\theta) v \geq \mu
    \end{align*}
     for all $n$, $P_{\theta_0}$-a.s..

\end{enumerate}

\begin{enumerate}[]
    \item[A6.]
    (\emph{Regular information}) 
    The random matrix $\hat D_n$ satisfies
    \[
    \tau_1 \leq \tau_{\min} (\hat D_n)\leq  \tau_{\max} (\hat D_n) \leq \tau_2
    \]
    for some $\tau_1, \tau_2 >0$, for all $n,$  $P_{\theta_0}$-a.s..
\end{enumerate}

\begin{remark}
    Condition A5($r$)-(\emph{ii}) can be seen as  a finite sample version of the so-called identifiability condition (see, e.g., \cite{ciolek2022lasso},\cite{amorino2024sampling}).
\end{remark}

\begin{lemma}[Non-asymptotic bound for QMLE] \label{lem:non-asy}
Let $\tilde \theta_n^{(QL)} = (\tilde \alpha_n^{(QL)}, \tilde \beta_n^{(QL)})$ be  the QL estimator. Under A5$(r)$, on the event \{$|\tilde \theta_n^{(QL)} - \theta_0| \leq r$\}, we have 

\begin{equation}
    \sqrt{n \Delta_n} |\tilde\alpha_n^{(QL)} - \alpha_0| \leq \frac{2 \xi_n}{\mu} \sqrt{\p}, 
    \qquad 
    \sqrt{n} |\tilde\beta_n^{(QL)} - \beta_0| \leq \frac{2 \xi_n}{\mu} \sqrt{\q}.
\end{equation}

\end{lemma}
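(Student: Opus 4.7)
The plan is to exploit the coordinatewise minimality of $\tilde\theta_n^{(QL)}=(\tilde\alpha_n^{(QL)},\tilde\beta_n^{(QL)})$ in order to decouple the errors on $\alpha$ and $\beta$. Since $\tilde\theta_n^{(QL)}$ minimizes (the negative quasi-likelihood) $\ell_n$ over the product set $\Theta=\Theta_\alpha\times\Theta_\beta$, and $\theta_0\in\mathrm{Int}(\Theta)$ forces both $(\alpha_0,\tilde\beta_n^{(QL)})$ and $(\tilde\alpha_n^{(QL)},\beta_0)$ to lie in $\Theta$, the starting point is the pair of scalar inequalities
$$
\ell_n(\tilde\alpha_n^{(QL)},\tilde\beta_n^{(QL)})\leq \ell_n(\alpha_0,\tilde\beta_n^{(QL)}),
\qquad
\ell_n(\tilde\alpha_n^{(QL)},\tilde\beta_n^{(QL)})\leq \ell_n(\tilde\alpha_n^{(QL)},\beta_0),
$$
which completely avoid the cross-block Hessian terms that a joint Taylor argument would produce.

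For the $\alpha$-bound I would apply Taylor's formula with integral remainder to $\alpha\mapsto\ell_n(\alpha,\tilde\beta_n^{(QL)})$ around $\alpha_0$ and, writing $u_\alpha:=\tilde\alpha_n^{(QL)}-\alpha_0$, obtain
$$
\ell_n(\tilde\alpha_n^{(QL)},\tilde\beta_n^{(QL)})-\ell_n(\alpha_0,\tilde\beta_n^{(QL)})=\partial_\alpha\ell_n(\alpha_0,\tilde\beta_n^{(QL)})^\top u_\alpha+u_\alpha^\top H_\alpha u_\alpha,
$$
where $H_\alpha=\int_0^1(1-t)\,\partial^2_{\alpha\alpha}\ell_n(\alpha_0+tu_\alpha,\tilde\beta_n^{(QL)})\,dt$. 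On the event $\{|\tilde\theta_n^{(QL)}-\theta_0|\leq r\}$ the integration segment sits within distance $|u_\alpha|\leq r$ of $\tilde\theta_n^{(QL)}$, so A5$(r)$-(ii), applied to unit vectors of the form $(v_\alpha,0)\in\mathbb R^{\p+\q}$ and combined with the scaling identity $\partial^2_{\alpha\alpha}\ell_n=n\Delta_n\,\partial^2_{\alpha\alpha}\overline{\ell_n}$, yields $H_\alpha\succeq \tfrac{n\Delta_n\mu}{2}\mathbf{I}_{\p}$. Feeding this back into the first inequality and using Cauchy--Schwarz gives
$$
\tfrac{n\Delta_n\mu}{2}\,|u_\alpha|^2\leq |\partial_\alpha\ell_n(\alpha_0,\tilde\beta_n^{(QL)})|\,|u_\alpha|.
$$
Dividing by $|u_\alpha|$ (trivial if $u_\alpha=0$) and multiplying by $\sqrt{n\Delta_n}$, together with $\partial_\alpha\overline{\ell_n}=(n\Delta_n)^{-1/2}\partial_\alpha\ell_n$, yields $\sqrt{n\Delta_n}\,|u_\alpha|\leq \tfrac{2}{\mu}\,|\partial_\alpha\overline{\ell_n}(\alpha_0,\tilde\beta_n^{(QL)})|$. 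Since $|(\alpha_0,\tilde\beta_n^{(QL)})-\theta_0|=|\tilde\beta_n^{(QL)}-\beta_0|\leq r$, A5$(r)$-(i) bounds each of the $\p$ components of $\partial_\alpha\overline{\ell_n}(\alpha_0,\tilde\beta_n^{(QL)})$ by $\xi_n$, giving $|\partial_\alpha\overline{\ell_n}(\alpha_0,\tilde\beta_n^{(QL)})|\leq \xi_n\sqrt{\p}$ and hence the target estimate $\sqrt{n\Delta_n}\,|\tilde\alpha_n^{(QL)}-\alpha_0|\leq 2\xi_n\sqrt{\p}/\mu$.

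An entirely symmetric argument, based on the second scalar inequality and Taylor-expanding $\beta\mapsto\ell_n(\tilde\alpha_n^{(QL)},\beta)$ around $\beta_0$, with A5$(r)$-(ii) applied to vectors $(0,v_\beta)$ and A5$(r)$-(i) invoked at $(\tilde\alpha_n^{(QL)},\beta_0)$, produces $\sqrt{n}\,|\tilde\beta_n^{(QL)}-\beta_0|\leq 2\xi_n\sqrt{\q}/\mu$. The main conceptual step is the coordinatewise split of the opening paragraph: a direct Taylor expansion of $\ell_n$ at $\theta_0$ together with the full Hessian lower bound only delivers the weaker estimate $|A_n^{-1}(\tilde\theta_n^{(QL)}-\theta_0)|\leq 2\xi_n\sqrt{\p+\q}/\mu$, so the genuine trick is anchoring one block of parameters at its empirical optimum while pushing the other to its true value. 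The rest is bookkeeping: matching the powers of $n\Delta_n$ between the Hessian lower bound and the gradient upper bound, and verifying that every point at which A5 is evaluated, namely $(\alpha_0,\tilde\beta_n^{(QL)})$, $(\tilde\alpha_n^{(QL)},\beta_0)$, and the two interpolation segments, lies in the appropriate $r$-neighborhood of $\theta_0$ or $\tilde\theta_n^{(QL)}$.
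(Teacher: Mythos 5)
Your proof is correct and follows essentially the same route as the paper's: fix $\beta$ at $\tilde\beta_n^{(QL)}$, combine the gradient bound A5$(r)$-(i) (giving the $\xi_n\sqrt{\p}$ factor) with the scaled Hessian lower bound A5$(r)$-(ii) (giving the $\mu/2$ curvature), and repeat symmetrically for $\beta$. The only cosmetic difference is that the paper sandwiches the value gap $\ell_n(\alpha_0,\tilde\beta_n)-\ell_n(\tilde\alpha_n,\tilde\beta_n)$ between a first-order expansion around $\alpha_0$ and a second-order expansion around $\tilde\alpha_n$ (using the first-order condition there), whereas you expand once around $\alpha_0$ and invoke coordinatewise minimality — an equivalent argument that is, if anything, marginally more robust at the boundary.
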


In this section, we derive non-asymptotic results for the Elastic-Net estimator in the case where the $\hat G_n$ information matrix is taken to be block-diagonal. This is a mild restriction, since in most cases we have that the off-diagonal blocks of the scaled information matrix $\hat D_n^{\alpha \beta}$ converge to zero in probability. This is the case in the notable example of the hessian matrix in quasi-likelihood estimation, when evaluated at $\tilde \theta_n^{(QL)}$.

Specifically, let us assume $\hat G_n=\text{diag}(\hat G_n^{\alpha\alpha}, \hat G_n^{\beta\beta})$ which satisfies condition P4.  In this case the objective function can be decomposed as follows
$$\mathcal{F}_n(\theta;\tilde{\theta}_n)=\mathcal{F}_{1,n}(\alpha;\tilde{\alpha}_n)+\mathcal{F}_{2,n}(\beta;\tilde{\beta}_n)$$
where
\begin{align*}
       \mathcal{F}_{1,n}(\alpha;\tilde{\alpha}_n)&=\langle \hat{G}_n, (\alpha-\tilde{\alpha}_n)^{\otimes 2}\rangle+
       |\alpha|_{1,\kappa_n}+\lambda_{2,n}|\alpha|^2\notag\\
       &=| (\hat{G}_n^{\alpha\alpha})^{1/2} (\alpha-\tilde{\alpha}_n)|^2+
       |\alpha|_{1,\kappa_n}+\lambda_{2,n}|\alpha|^2
    \end{align*}
    and
    \begin{align*} 
       \mathcal{F}_{2,n}(\beta;\tilde{\beta}_n)&=\langle \hat{G}_n^{\beta\beta}, (\beta-\tilde{\beta}_n)^{\otimes 2}\rangle+|\beta|_{1,\pi_n}+\gamma_{2,n}|\beta|^2\notag\\
       &=| (\hat{G}_n^{\beta\beta})^{1/2} (\beta-\tilde{\beta}_n)|^2+|\beta|_{1,\pi_n}+\gamma_{2,n}|\beta|^2
    \end{align*}
    Furthermore $\hat\alpha_n\in \arg\min_\alpha \mathcal{F}_{1,n}(\alpha;\tilde{\alpha}_n)$ and $\hat\beta_n\in \arg\min_\beta \mathcal{F}_{2,n}(\beta;\tilde{\beta}_n).$

The following theorem provides non-asymptotic bounds for the Elastic-Net estimator. 

\begin{theorem}[Non-asymptotic bounds]\label{thm:ineq} Let us assume $\hat G_n=\text{diag}(\hat G_n^{\alpha\alpha}, \hat G_n^{\beta\beta})$ satisfying P4.  The error bounds of the Elastic-Net estimators $\hat\alpha_n$ and $\hat\beta_n$ are given by
\begin{align}\label{eq:nabound1a}
        |\hat\alpha_n-\alpha_0|\leq \frac{2}{n\Delta_n\tau_{\min}(\hat D_n^{\alpha\alpha})+\lambda_2}\left(\lambda_2|\alpha_0|+ n\Delta_n\tau_{\max}(\hat D_n^{\alpha\alpha})  |\tilde\alpha_n-\alpha_0|+\lambda_1|\kappa_n|\right),
    \end{align}
    and
            \begin{align}\label{eq:nabound1b}
        |\hat\beta_n-\beta_0| \leq \frac{2}{n \tau_{\min}( \hat D_n^{\beta\beta})+\gamma_2}\left(\gamma_2|\beta_0|+  n\tau_{\max}(\hat D_n^{\beta\beta})|   \tilde\beta_n-\beta_0|+\gamma_1 |\pi_n|\right).
    \end{align}
    
\end{theorem}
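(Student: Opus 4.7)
The plan is to exploit the block-diagonal hypothesis $\hat G_n=\mathrm{diag}(\hat G_n^{\alpha\alpha},\hat G_n^{\beta\beta})$, under which the objective function splits additively as $\mathcal F_n(\theta;\tilde\theta_n)=\mathcal F_{1,n}(\alpha;\tilde\alpha_n)+\mathcal F_{2,n}(\beta;\tilde\beta_n)$, so the minimization decouples into two independent penalized least-squares problems. It therefore suffices to establish \eqref{eq:nabound1a} for $\hat\alpha_n$: the companion bound \eqref{eq:nabound1b} follows by the same argument after replacing the rate $n\Delta_n$ by $n$, the tuning constants $(\lambda_{1,n},\lambda_{2,n})$ by $(\gamma_{1,n},\gamma_{2,n})$, the weights $\kappa_n$ by $\pi_n$, and $(\hat D_n^{\alpha\alpha},\alpha_0,\tilde\alpha_n)$ by $(\hat D_n^{\beta\beta},\beta_0,\tilde\beta_n)$.

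The starting point is the basic inequality $\mathcal F_{1,n}(\hat\alpha_n;\tilde\alpha_n)\leq \mathcal F_{1,n}(\alpha_0;\tilde\alpha_n)$. Setting $u:=\hat\alpha_n-\alpha_0$ and applying the polarization identity $\langle M a,a\rangle-\langle M b,b\rangle=\langle M(a-b),a-b\rangle+2\langle M(a-b),b\rangle$ with $M=\hat G_n^{\alpha\alpha}$, $a=\hat\alpha_n-\tilde\alpha_n$, $b=\alpha_0-\tilde\alpha_n$, together with the scalar identity $|\hat\alpha_n|^2-|\alpha_0|^2=|u|^2+2\langle u,\alpha_0\rangle$, the basic inequality reduces (after the common terms cancel) to
\begin{equation*}
\langle\hat G_n^{\alpha\alpha}u,u\rangle+\lambda_{2,n}|u|^2 \;\leq\; \bigl(|\alpha_0|_{1,\kappa_n}-|\hat\alpha_n|_{1,\kappa_n}\bigr)-2\langle \hat G_n^{\alpha\alpha}u,\alpha_0-\tilde\alpha_n\rangle -2\lambda_{2,n}\langle u,\alpha_0\rangle.
\end{equation*}

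Using $\hat G_n^{\alpha\alpha}=n\Delta_n\,\hat D_n^{\alpha\alpha}$, the left-hand side is lower bounded by $(n\Delta_n\,\tau_{\min}(\hat D_n^{\alpha\alpha})+\lambda_{2,n})|u|^2$. Each term on the right is controlled linearly in $|u|$: Cauchy--Schwarz combined with the operator-norm bound give $|\langle \hat G_n^{\alpha\alpha}u,\alpha_0-\tilde\alpha_n\rangle|\leq n\Delta_n\,\tau_{\max}(\hat D_n^{\alpha\alpha})|\tilde\alpha_n-\alpha_0|\,|u|$ and $|\langle u,\alpha_0\rangle|\leq |\alpha_0|\,|u|$, while the weighted-$\ell_1$ difference is handled by the componentwise triangle inequality followed by Cauchy--Schwarz, $|\alpha_0|_{1,\kappa_n}-|\hat\alpha_n|_{1,\kappa_n}\leq \sum_j \kappa_{n,j}|u_j|\leq |\kappa_n|\,|u|$. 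Dividing through by $|u|>0$ (the case $u=0$ is trivial) yields a bound of the desired form \eqref{eq:nabound1a}; the analogous reasoning applied to $\mathcal F_{2,n}$ gives \eqref{eq:nabound1b}.

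I do not anticipate a substantive obstacle: the argument is a purely deterministic consequence of the minimizing property of the Elastic-Net estimator together with the strong convexity induced by the quadratic plus Ridge part, and the block-diagonal assumption makes the reasoning completely modular between the $\alpha$- and $\beta$-blocks. The point that requires care is the linear-in-$u$ remainder $2\lambda_{2,n}\langle u,\alpha_0\rangle$ produced by differencing the Ridge term $\lambda_{2,n}|\cdot|^2$: this remainder cannot be absorbed into the quadratic lower bound and is exactly what forces the explicit $|\alpha_0|$-dependence in the final inequality, distinguishing the Elastic-Net bound from its adaptive-LASSO analogue.
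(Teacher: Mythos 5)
Your proof is correct, and it takes a genuinely different route from the paper. The paper follows the adaptive Elastic-Net strategy of Zou--Zhang: it introduces the pure Ridge estimator $\hat\alpha_n(\lambda_2)=(\hat G_n^{\alpha\alpha}+\lambda_2\mathbf I_{\p})^{-1}\hat G_n^{\alpha\alpha}\tilde\alpha_n$ as an intermediate object, splits $|\hat\alpha_n-\alpha_0|^2\leq 2|\hat\alpha_n-\hat\alpha_n(\lambda_2)|^2+2|\hat\alpha_n(\lambda_2)-\alpha_0|^2$, bounds the Ridge term explicitly via its closed form (producing the $\lambda_2|\alpha_0|$ bias and the $\tau_{\max}$ initial-error contributions) and bounds the Elastic-Net-to-Ridge distance by $\lambda_1|\kappa_n|/(\tau_{\min}(\hat G_n^{\alpha\alpha})+\lambda_2)$ via a strong-convexity argument, finally extracting the linear bound from the squared one. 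You instead run the standard ``basic inequality'' argument directly: compare $\mathcal F_{1,n}$ at $\hat\alpha_n$ and at $\alpha_0$, polarize the quadratic and Ridge terms, lower-bound the left side by $(n\Delta_n\tau_{\min}(\hat D_n^{\alpha\alpha})+\lambda_2)|u|^2$, upper-bound each right-hand term linearly in $|u|$, and divide. All your intermediate steps check out (the polarization identity, the weighted-$\ell_1$ control $\sum_j\kappa_{n,j}|u_j|\leq|\kappa_n|\,|u|$, and the eigenvalue rescaling $\hat G_n^{\alpha\alpha}=n\Delta_n\hat D_n^{\alpha\alpha}$), and your final constant on the $|\kappa_n|$ term is in fact slightly sharper (a factor $1$ rather than $2$), so the stated bound follows a fortiori. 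What your route buys is a shorter, fully self-contained derivation that never needs the closed-form Ridge solution and produces the linear-form bound directly; what the paper's route buys is an explicit decomposition of the error into an $\ell_1$-shrinkage component and a Ridge bias-plus-initial-error component, which ties the result to the existing adaptive Elastic-Net literature and is reused conceptually in the subsequent high-probability and prediction-error results. Your closing observation is also on point: the $2\lambda_{2}\langle u,\alpha_0\rangle$ cross term from differencing the Ridge penalty is exactly what generates the $\lambda_2|\alpha_0|$ term in the bound, mirroring the Ridge bias term in the paper's decomposition.
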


\begin{remark}
    If $\frac{\lambda_{2,n}}{\sqrt{n\Delta_n}}\longrightarrow 0$ and $\frac{\gamma_{2,n}}{\sqrt{n}}\longrightarrow 0,$ one has immediately that  P1 leads to
    $$A_n(\hat G_n+C(\lambda_{2,n},\gamma_{2,n}))A_n\stackrel{p}{\longrightarrow} G.$$
    Therefore, Remark \ref{rem:enetlasso} implies that the Oracle properties showed in \cite{de2012adaptive} and \cite{de2021regularized} hold true for $\hat\theta_n.$
\end{remark}

In the previous theorem the properties of the estimator depend on the error of the initial estimator, which in particular depends implicitly on the dimension, with the advantage of requiring very little assumptions.
In the following theorem we gather a deeper insight into the high-dimensional features of the estimator, by obtaining a high-probability result. We need to introduce some extra assumptions in order to control the finite sample behavior of the contrast and the Hessian matrix. Moreover we state the following results in the case where the initial estimator is the quasi-likelihood estimator. 

\begin{theorem}[Non-asymptotic bounds II]\label{thm:ineq-ii} Suppose that $\tilde \theta_n =\tilde \theta_n^{(QL)}$. For any $r>0$, under the assumptions of Theorem \ref{thm:ineq}, and, in addition, under A5($r/n\Delta_n$) - A6, the Elastic-Net estimators $\hat\alpha_n$ and $\hat\beta_n$ satisfies the bounds 

\begin{align}\label{eq:nabound2a}
        |\hat\alpha_n-\alpha_0| \leq 
        \frac{2}{n\Delta_n \tau_1 +\lambda_2} 
        \left(\lambda_2|\alpha_0|+ 
        \frac{\tau_2 \xi_n}{\mu} \sqrt{n\Delta_n \, \p}
        +\lambda_1|\kappa_n|\right),
    \end{align}
    and
\begin{align}\label{eq:nabound2b}
        |\hat\beta_n-\beta_0| \leq \frac{2}{n \tau_1 + \gamma_2}
        \left(\gamma_2|\beta_0|+  
        \frac{\tau_2 \xi_n}{\mu}  \sqrt{n \, \q}
        +\gamma_1 |\pi_n|\right)
\end{align}
 with probability at least $1 - C_L/r^L$, for some $L>0$.
 \end{theorem}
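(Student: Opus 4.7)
The plan is to combine the deterministic bound of Theorem~\ref{thm:ineq} with the finite-sample control of the QMLE provided by Lemma~\ref{lem:non-asy}, and then invoke a polynomial-type tail inequality to argue that the event on which this control is valid has probability at least $1-C_L/r^L$.

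First I would apply Theorem~\ref{thm:ineq} with the block-diagonal choice $\hat G_n = \mathrm{diag}(\hat G_n^{\alpha\alpha}, \hat G_n^{\beta\beta})$, so that the estimation errors of $\hat \alpha_n$ and $\hat \beta_n$ are bounded by expressions depending on $\tau_{\min}(\hat D_n^{\alpha\alpha})$, $\tau_{\max}(\hat D_n^{\alpha\alpha})$ and $|\tilde\alpha_n^{(QL)}-\alpha_0|$ (and similarly for the $\beta$-block). Assumption A6 then allows me to replace $\tau_{\min}(\hat D_n^{\alpha\alpha})$ and $\tau_{\min}(\hat D_n^{\beta\beta})$ by the deterministic lower bound $\tau_1$, and $\tau_{\max}(\hat D_n^{\alpha\alpha})$, $\tau_{\max}(\hat D_n^{\beta\beta})$ by $\tau_2$, uniformly in $n$ and almost surely. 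This already turns the stochastic inequality of Theorem~\ref{thm:ineq} into one whose randomness is carried solely through $|\tilde\theta_n^{(QL)}-\theta_0|$ and $\xi_n$.

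Next, on the event $\mathcal E_{n,r} := \{|\tilde\theta_n^{(QL)}-\theta_0|\leq r\}$ (with $r$ chosen as in the statement so that A5$(r/n\Delta_n)$ applies), Lemma~\ref{lem:non-asy} yields
\[
\sqrt{n\Delta_n}\,|\tilde\alpha_n^{(QL)}-\alpha_0|\leq \frac{2\xi_n}{\mu}\sqrt{\p},\qquad \sqrt{n}\,|\tilde\beta_n^{(QL)}-\beta_0|\leq \frac{2\xi_n}{\mu}\sqrt{\q}.
\]
Plugging these into the cross-terms $n\Delta_n\tau_{\max}(\hat D_n^{\alpha\alpha})|\tilde\alpha_n-\alpha_0|$ and $n\tau_{\max}(\hat D_n^{\beta\beta})|\tilde\beta_n-\beta_0|$ gives exactly the terms $\frac{\tau_2\xi_n}{\mu}\sqrt{n\Delta_n\,\p}$ and $\frac{\tau_2\xi_n}{\mu}\sqrt{n\,\q}$ appearing in \eqref{eq:nabound2a}--\eqref{eq:nabound2b} (absorbing the factor $2$ into the constant $C_L$ below, or re-deriving Theorem~\ref{thm:ineq} with a sharper numerical constant on this term). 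The key point is that the dimensions $\p$ and $\q$ now enter explicitly through $\sqrt{\p}$ and $\sqrt{\q}$, rather than being hidden in $|\tilde\theta_n-\theta_0|$.

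The main technical obstacle is showing that $\mathcal E_{n,r}$ has probability at least $1-C_L/r^L$. This is the high-probability concentration statement for the initial QMLE, and it is typically obtained from a Yoshida-type polynomial inequality for the statistical random field associated with $\ell_n$ (see \cite{yoshida2011polynomial}): under the regularity assumptions of A5$(r/n\Delta_n)$ combined with the uniform non-degeneracy and moment conditions on $X$, the scaled quasi-likelihood ratio enjoys tail bounds of the form $P(|A_n^{-1}(\tilde\theta_n^{(QL)}-\theta_0)|>r)\leq C_L/r^L$ for every $L>0$. Translating this back to the unscaled radius used in A5 and intersecting with $\mathcal E_{n,r}$ yields the claimed probability, completing the proof. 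The delicate step is to verify that the radius scaling chosen in A5$(r/n\Delta_n)$ is compatible with the polynomial inequality: in practice one picks the radius large enough that the event contains the QMLE with the required probability, and the uniform $L^r$-boundedness of the rescaled score and Hessian (guaranteed by A5) supplies the moment bounds feeding the inequality.
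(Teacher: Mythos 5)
Your proposal follows essentially the same route as the paper: apply Theorem~\ref{thm:ineq}, replace the extreme eigenvalues of $\hat D_n$ by $\tau_1,\tau_2$ via A6, control $|\tilde\alpha_n^{(QL)}-\alpha_0|$ and $|\tilde\beta_n^{(QL)}-\beta_0|$ through Lemma~\ref{lem:non-asy} on the good event, and obtain the probability $1-C_L/r^L$ from Yoshida's polynomial-type large deviation inequality. The only small imprecision is the definition of the good event: to invoke Lemma~\ref{lem:non-asy} under A5$(r/n\Delta_n)$ you need $\{|\tilde\theta_n-\theta_0|\le r/n\Delta_n\}$ rather than $\{|\tilde\theta_n-\theta_0|\le r\}$, which the paper obtains as a consequence of $\{|A_n^{-1}(\tilde\theta_n-\theta_0)|\le r\}$ — the event to which the polynomial deviation bound directly applies; your factor-of-$2$ remark on the middle term is a fair observation that the paper itself glosses over.
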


\begin{remark}
    In the previous theorem the parameter $r>0$ plays the role of a tuning parameter for the above inequalities. In particular it controls the finite-sample regularity of the contrast in  a neighborhood of 
    the maximum likelihood estimator. For fixed $n$, the larger the desired probability, the larger has to be the neighborhood in which A5$(r/n\Delta_n)$ holds. At the same time as $n$ increases the regularity requirement is less restrictive. Note also that this property holds asymptotically, in view of the fact that, under mild regularity assumptions, the quasi-likelihood satisfies an asymptotically quadratic approximation (see   \cite{yoshida2011polynomial}).

    Furthermore, we can deal with a pure high-dimensional setting by assuming  $\p=\p_n=O((n\Delta_n)^{\nu_1}), \nu_1\in(0,1),$ and $\q=\q_n=O(n^{\nu_2}), \nu_2\in(0,1).$ In this case the parameters tend to be large as $n$ increases and by exploiting the bounds \eqref{eq:nabound2a} and \eqref{eq:nabound2b} (and condition A3), it is possible to conclude that $\hat\alpha_n$ and $\hat \beta_n$ are consistent.    
\end{remark}


\section{Prediction error}\label{sec:pe}
We now turn our attention to the analysis of the prediction error. Our aim is to obtain finite sample guarantees for the mean absolute error of predictions at a future time. 
Assume that
\begin{enumerate}
	\item[A7.] The drift function is Lipschitz continuous w.r.t. $\alpha$
	with constant possibly depending on $x$ and uniformly bounded in $L^2(P)$; i.e.
	\[
	|b(x, \alpha_1) - b(x, \alpha_2)| \leq C(x) |\alpha_1 - \alpha_2| \quad  \forall \alpha_1, \alpha_2 \in \Theta_\alpha
	\]
	and $D=\sup_t \mathbb E C^2(X_t) < \infty$.
\end{enumerate}

\begin{remark}
    Assumption A7 is for example satisfied by a linear drift,  or when the drift is of the form $b(x, \alpha) = \sum_j \alpha_j b_j(x)$
    for Lipschitz functions $b_j$. We cannot just assume that $C(x) $ is finite a.s., as it would fail even in the linear drift case. 
\end{remark}

 The construction of the predictor is based on the Euler-Maruyama Euler-Maruyama approximation of $X$, starting from $X_{T}$,  that is 
\begin{equation}\label{eq:euler}
    \tilde X_{T + h} = \tilde X_{T + h}(W, \theta) = X_{T} + b(X_{T}, \alpha) h + \sigma(X_{T}, \beta)\Delta W_{h}
\end{equation}

where $\Delta W_{h}=W_{T + h} - W_{T} $.  By standard arguments it can be shown that
\[
\mathbb E[ |\tilde X_{T + h} -  X_{T + h}|^2|\mathcal F_T]  \leq A h
\]
where the constant $A$ depends on the Lipschitz constant of $b$ and $\sigma$ and the moments of $X_0$, 
but not on $h$ (see \cite{kloeden1992}, Th 10.2.2) 
Our goal is to obtain a prediction for the future value $X_{T+h}$, but we cannot directly exploit \eqref{eq:euler} since in general we do not have access to the underlying Brownian motion $W$ nor to the true parameter $\theta_0$. For this reason we introduce an independent Brownian motion $W'$ and, given a $\mathcal F_T$-measurable estimator $\hat \theta$, we define the one-step predictor
\begin{equation}\label{eq:onestep-pred}
    \hat{X}_{T+h} = \mathbb{E}[
    \tilde X_{T + h}(W', \hat \theta) | \mathcal F_T
    ] = 
    X_T +  b(X_{T}, \hat \alpha)h
\end{equation}

Given a sample $\{X_{t_i}, i \in [n]\}$
and the Elastic-Net estimator satisfying the 
assumptions of Theorem 3, we give an error bound for the prediction error associated with the estimator $\hat{X}_{T_n+h}$, where $T_n = n\Delta_n$ is fixed.

\begin{theorem}[Non-Asymptotic prediction error bounds] \label{theo:prederr} Under the assumptions of \autoref{thm:ineq}, A6 and A7 we have that
    \begin{align}\label{eq:pred-bound}
        \mathsf{MAE}(\hat X_{T_n + h}) &= \mathbb E|X_{T_n + h} - \hat X_{T_n + h}| \notag
        \\
        & \leq 
        \sqrt h C_1 + 
        h C_2 +
        \frac{ C_3 h}{T_n\tau_1+\lambda_2}
        \left(\lambda_2 |\alpha_0|+  \tau_2 T_n  \mathbb E |\tilde\alpha_n-\alpha_0| +\lambda_1 \mathbb E|\kappa_n|\right)
    \end{align}
\end{theorem}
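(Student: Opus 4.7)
The strategy is to insert the It\^o representation of $X_{T_n+h}$ so as to split the prediction error into a martingale part, a drift-fluctuation part and a drift-estimation part, bound each piece in $L^1$, and finally invoke the pathwise inequality of Theorem \ref{thm:ineq} to recognize the drift-estimation contribution in the announced form. Concretely, writing
\begin{align*}
X_{T_n+h} - \hat X_{T_n+h}
&= \underbrace{\int_{T_n}^{T_n+h}\!\!\bigl[b(X_s,\alpha_0) - b(X_{T_n},\alpha_0)\bigr]\de s}_{=:\,I}
+ \underbrace{\bigl[b(X_{T_n},\alpha_0) - b(X_{T_n},\hat\alpha_n)\bigr]\,h}_{=:\,II} \\
&\qquad + \underbrace{\int_{T_n}^{T_n+h}\!\!\sigma(X_s,\beta_0)\,\de W_s}_{=:\,III},
\end{align*}
the triangle inequality gives $\mathsf{MAE}(\hat X_{T_n+h}) \le \mathbb{E}|I| + \mathbb{E}|II| + \mathbb{E}|III|$.

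For $\mathbb{E}|III|$ the It\^o isometry combined with Cauchy--Schwarz gives $(\mathbb E|III|)^2 \le \mathbb E |III|^2 \le \int_{T_n}^{T_n+h}\mathbb E\|\sigma(X_s,\beta_0)\|^2\de s \le K_\sigma\,h$, using the polynomial growth of $\sigma$ and the uniform moment bound $\sup_t \mathbb E|X_t|^k<\infty$; this accounts for the $\sqrt h\,C_1$ term. For $\mathbb{E}|I|$, the Lipschitz continuity of $b$ in $x$ together with the standard one-step estimate $\mathbb E|X_s - X_{T_n}|^2 \le K(s-T_n)$ delivers a contribution of order $h^{3/2}$; alternatively, the polynomial growth of $b$ and the moments of $X$ give the coarser $O(h)$ bound directly. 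In either case this accounts for the $h\,C_2$ term.

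The main term is $\mathbb{E}|II|$. Assumption A7 and Cauchy--Schwarz (using $D=\sup_t \mathbb E C^2(X_t)$) give
\[
\mathbb E|II| \,\le\, h\,\mathbb E\bigl[C(X_{T_n})|\hat\alpha_n-\alpha_0|\bigr] \,\le\, h\sqrt D\,\bigl(\mathbb E|\hat\alpha_n-\alpha_0|^2\bigr)^{1/2}.
\]
I then plug in the pathwise inequality \eqref{eq:nabound1a}; assumption A6 replaces $\tau_{\min}(\hat D_n^{\alpha\alpha})$ and $\tau_{\max}(\hat D_n^{\alpha\alpha})$ by the deterministic constants $\tau_1$ and $\tau_2$, so that
\[
|\hat\alpha_n - \alpha_0| \le \frac{2}{T_n\tau_1 + \lambda_2}\bigl(\lambda_2|\alpha_0| + T_n\tau_2\,|\tilde\alpha_n - \alpha_0| + \lambda_1|\kappa_n|\bigr),\quad P_{\theta_0}\text{-a.s.}
\]
Squaring, taking expectations and using the $L^2$ triangle inequality yields an estimate for $(\mathbb E|\hat\alpha_n-\alpha_0|^2)^{1/2}$ of precisely the form of the bracketed factor in \eqref{eq:pred-bound}, with a constant $C_3$ proportional to $\sqrt D$. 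Summing the three contributions closes the argument.

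The principal obstacle is bookkeeping rather than machinery: the Cauchy--Schwarz step naturally produces the $L^2$-moments $(\mathbb E|\tilde\alpha_n-\alpha_0|^2)^{1/2}$ and $(\mathbb E|\kappa_n|^2)^{1/2}$, whereas the statement displays first moments $\mathbb E|\cdot|$. These are equivalent up to universal constants under the uniform $L^r$-boundedness of Theorem \ref{thm:bound}; alternatively one may read the expectations in the statement as $L^2$-moments, or impose A7 with a deterministic Lipschitz constant, in which case no Cauchy--Schwarz is needed and the first-moment bound is direct. The rest of the proof consists only of routine constants-chasing.
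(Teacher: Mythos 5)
Your proof is correct and follows essentially the same route as the paper: the paper bounds the one-step Euler error $X_{T_n+h}-X_{T_n}-h\,b(X_{T_n},\alpha_0)$ in $L^2$ in a single stroke via the conditional-moment expansion (Lemma 7 of Kessler--S\o rensen), where you split it into a martingale part (It\^o isometry) and a drift-fluctuation part (Lipschitz continuity in $x$), while the treatment of the main estimation term --- A7, Cauchy--Schwarz, then the pathwise bound \eqref{eq:nabound1a} of Theorem \ref{thm:ineq} combined with A6 --- is identical. Your observation that Cauchy--Schwarz naturally yields $L^2$-moments of $|\tilde\alpha_n-\alpha_0|$ and $|\kappa_n|$ rather than the first moments displayed in \eqref{eq:pred-bound} points to an imprecision that is also present in the paper's own proof (which simply states that the conclusion ``follows by Theorem \ref{thm:ineq}''), and the remedies you propose (uniform $L^r$-boundedness as in Theorem \ref{thm:bound}, or reading the expectations as $L^2$-norms) are reasonable.
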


The intuition behind the previous result is as follow. Suppose $h$ is small, so that $\sqrt h + h \lesssim \sqrt h$, and that $n$ is large, so that, heuristically, we can write 
$ \mathbb E T_n |\tilde\alpha_n-\alpha_0|^2 \sim \p $. 
Considering adaptive weights as in Remark \ref{rem:weights}, by taking $a_n \sim \sqrt {T_n}$, $\delta_1 = 1$, one has that $E|\kappa_n| \lesssim \sqrt {T_n}$.
Equation \eqref{eq:pred-bound} essentially tells us that the prediction mean absolute error behaves  as 
\begin{equation}\label{eq:pred-bound-intuition}
\mathsf{MAE}(\hat X_{T_n + h}) \lesssim \sqrt h + 
 \frac{h \sqrt \p}{\sqrt{T_n}} + O(T_n^{-1/2}).
\end{equation}
The first term plays the role of an \emph{irreducible} error related to the forecasting task, showing that the predictions degrade with the forecasting horizon. The second source of error is related to the estimation task: it improves with larger sample sizes, it worsens in higher dimensions.

\section{Elastic-Net Optimization and Numerical Analysis}\label{sec:sim}

\subsection{Optimization Algorithm and Coefficient Path}
To obtain an explicit solution in an optimization problem where the objective function \eqref{objfunction} can be decomposed as $\mathcal{F}_n(\theta;\tilde{\theta}_n)=g+h$, with $g=\langle \hat{G}_n, (\theta-\tilde{\theta}_n)^{\otimes 2}\rangle+R_n(\theta)$ being convex and differentiable, and $h=L_n(\theta)$ being convex but not differentiable, iterative proximal gradient algorithms can be used. Specifically, accelerated gradient methods use a weighted combination of the current estimate and the previous gradient directions, and the proximal map function is iteratively applied till convergence of the algorithm. 

In particular, in the objective function \eqref{objfunction} we can consider the adaptive $\ell_1$-penalty as the function $h$ in the above decomposition. It is possible to use the coordinate descent iterative algorithm where, at iteration $t$, the update is: \begin{equation}
    \hat{\theta}^{t+1}_k = \arg\min_{\theta_k} \mathcal{F}_n\left(\hat{\theta}_1^t, \ldots,\hat{\theta}_{k-1}^t,\theta_k,\hat{\theta}_{k+1}^t,\ldots,\hat{\theta}^t_{\p+\q}\right)
\end{equation}

Fixing the sample size $n$, the coefficients in \eqref{eq:adap-w1}, \eqref{eq:adap-w2} and \eqref{eq:ridge-pen} no longer depend on $n$. Specifically, we consider the penalization constant $\lambda\cdot\gamma$ for the LASSO component (instead of using $\lambda_{1,n}$ and $\gamma_{1,n}$), while, similarly, we consider $\lambda(1-\gamma)$ for the Ridge component (instead of $\lambda_{2,n}$ and $\gamma_{2,n}$), where $\lambda>0$ and $\gamma \in (0,1].$ The tuning term $\gamma$ controls the relative proportion between the regularization penalties $\ell_1$ and $\ell_2$; i.e. $\gamma=1$, it reduces to LASSO.

Let $w_k$ be the adaptive weight associated with the parameter $\theta_k$, which represents an element of the diagonal matrix $W=\mathrm{diag}(\kappa_{n,1},\ldots, \kappa_{n,\p}, \pi_{n,1},\ldots,\pi_{n,\q})$. The coordinate descent updates take the form:
\begin{equation}\label{eq:cd-enet}
	\hat \theta ^t_k  = \frac{1}{1 + \frac{\lambda}{\hat G_{(k,k)}} (1 - \gamma)} \mathcal{S}_{\frac{\lambda \gamma}{{\hat G_{(k,k)}}} w_k} \left(\tilde \theta_k - \frac{1}{\hat G_{(k,k)} } \sum_{i \neq k} \hat{G}_{(k,i)} (\hat \theta^{t-1}_i - \tilde \theta_i) \right)
\end{equation} or equivalently
\begin{equation}
\hat \theta ^t_k = \frac{1}{\hat G_{(k,k)} + {\lambda} (1 - \gamma)} 
\mathcal{S}_{\lambda \gamma w_k} 
\left(\hat G_{(k,k)} \hat \theta_k^{t-1} -  \hat{G}_{k \cdot} 
(\hat \theta^{t-1}- \tilde \theta) \right) 
\end{equation} for $k=1,\ldots,\p+\q$, $t\geq 1$, where $\hat{G}_{(i,k)}$ is the element in the $i$-th row and $k$-th column of $\hat{G}_n$ and $\mathcal{S}_\mu(z):=\mathrm{sign}(z)(z-\mu)_+$ is the soft-thresholding operator. 

From this expression it is immediate to check that the largest $\lambda$ value that is necessary to consider, $\lambda_{\max}$, is:
\[
\inf\{ \lambda > 0: \hat \theta (\lambda) = 0 \} \leq \| (\gamma W)^{-1} \hat G \tilde \theta\|_{\infty} =: \lambda_{\max}
\]

A more efficient algorithm for the computation of the solution path can be obtained by applying Proximal Accelerated Gradient Descent algorithms. In the LASSO case a popular algorithm of this type is the so-called FISTA(\cite{beck2009fast}). See \cite{hastie2015statistical} for an introduction to the subject. In order to obtain the solution of
\eqref{objfunction}, we apply Algorithm 1 in \cite{degregorio2024pathwiseoptimizationbridgetypeestimators},
which is based on the proximal map for the Elastic-Net problem, given by
\begin{align}
	\mathrm{prox}^{EN}_{s, \lambda, w} &= 
	\arg \min_u \left\{ \frac 12 (x-u)^2 + \lambda \gamma w |u| + \frac{\lambda (1 - \gamma)}{2} u^2\right\}
	\\\notag
	&=
	\frac{1}{1 + \lambda s (1 - \gamma)}\mathrm{sign}(x) (|x| - \lambda \gamma s w)_+
	\\\notag
	&=
	\frac{1}{1 + \lambda s (1 - \gamma)} \mathcal{S}_{\lambda \gamma s w} (x)
\end{align}
where $s$ denoted the stepsize. 

Improvements beyond the reported algorithm can be achieved by leveraging  block-wise Proximal Alternating Minimization (referred to in the literature as PALM, see \cite{bolte2014proximal}), which updates one block of parameters at each step (see \cite{degregorio2024pathwiseoptimizationbridgetypeestimators}, for further insight regarding pathwise solution algorithms).

In a diffusion process, as in \eqref{eq:sde}, the classical cross-validation technique for tuning parameter selection $\lambda$ is not applicable due to the data dependency structure. For this reason, a data-driven technique is used, based on an iterative algorithm that considers the Euler discretization of the solution of \eqref{eq:sde} and, given estimates of the parameters, a score function of the current estimated residuals (see \cite{de2021regularized}). 
Another criterion for choosing $\lambda$ is to use the AIC method, penalizing the log-likelihood function \eqref{qlik}.

Figure \ref{fig:enet-path} shows the solution path computed by the coordinate descent updates \eqref{eq:cd-enet}
for an Ornstein-Uhlenbek model
$$
\de X_t = -B X_t \de t + A \de W_t
$$
where $A, B$ are parameter SPD matrices in $\mathbb R^{d\times d}$, with $d=5$.

\begin{figure}[ht!]
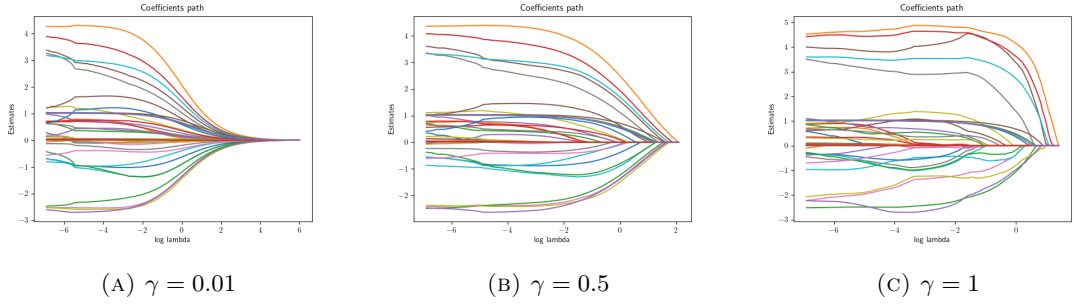

	\centering
	\begin{subfigure}[b]{0.32\textwidth}
		\includegraphics[width=\textwidth]{enet001.pdf}
		\caption{$\gamma = 0.01$}
		\label{fig:sub1}
	\end{subfigure}
	\hfill 
	\begin{subfigure}[b]{0.32\textwidth}
		\includegraphics[width=\textwidth]{enet05.pdf}
		\caption{$\gamma = 0.5$}
		\label{fig:sub2}
	\end{subfigure}
	\hfill 
	\begin{subfigure}[b]{0.32\textwidth}
		\includegraphics[width=\textwidth]{enet1.pdf}
		\caption{$\gamma = 1$}
		\label{fig:sub3}
	\end{subfigure}
	\caption{Elastic-Net coefficient paths as the tuning parameter $\lambda$ for different values of the mixing parameter $\gamma$.}
	\label{fig:enet-path}
\end{figure}

Among all the $\lambda$ values that make up the solution path for the Elastic-Net, the optimal value, $\lambda_{opt}$, is the largest $\lambda$ in the sequence such that the loss is within a median range of the minimum,
thereby ensuring a more stable selection. 
These algorithms are implemented in the Python library \texttt{sdelearn} (see \cite{sdelearn}).

\subsection{Stochastic regression model}

In this subsection we test our findings by applying our method to synthetic data. We evaluate the model selection and predictive capabilities of our Adaptive Elastic-Net method, and we compare it with LASSO and non-regularized (quasi-likelihood estimator). Adaptive estimators are based on the quasi-likelihood initial estimator, as in Remark \ref{rem:ini-ql}.

This next study shows the model selection capability of Elastic-Net in the
presence of correlation between the coordinates of a multivariate diffusion process.
Consider a stochastic regression model, i.e. a $(d+1)$-dimensional process $(Y, X_1, X_2, \ldots , X_d)$
that satisfies a SDE \eqref{eq:sde} with linear drift
\begin{equation}
b(y, x_1, , \ldots, x_d) = \left(
\begin{gathered}\label{eq:reg_model}
    \sum_{j=1}^d \alpha_j x_j - \alpha_{d+1} y  \\
    \alpha_{01} - \alpha_{11} x_1 \\
    \vdots  \\
    \alpha_{0d} - \alpha_{1d} x_d
\end{gathered}
\right)
\end{equation}
and diffusion matrix
\begin{equation}\label{eq:diff-corr}
A = 
\begin{bmatrix}
    \beta_0 & 0 & \cdots&0 \\
    0 & \beta_{11} & \cdots &  \beta_{1d} \\
    0 & \vdots &   &   \vdots \\
    0 & \beta_{d1} &\cdots  &  \beta_{dd}
\end{bmatrix}.
\end{equation}
Matrix $A$ controls the correlation between the regressors. We discuss separately the case $d=2$ from the setting $d>2.$

\textbf{Case} $\bm{d=2.}$
The parameters used in the simulation are as follows:
$(\alpha_1, \alpha_2, \alpha_3) = (1, 1, 1),$ $(\alpha_{01}, \alpha_{02}, \alpha_{11}, \alpha_{22}) = (0, 0, 1, 1)$, $\beta_0 = 1$ and
\[
\begin{aligned}
    &\begin{bmatrix}
        \beta_{11} & \beta_{12} \\
        \beta_{21} & \beta_{22}
    \end{bmatrix} = 
    \begin{bmatrix}
        0.8487 & 0.5316 \\
        0.5316 & 0.8487
    \end{bmatrix}
    = 
    \begin{bmatrix}
        1 & 0.9 \\
        0.9 & 1
    \end{bmatrix}^\frac 12 .
\end{aligned}
\]
The $\beta_{ij}$ values are chosen so that the noise correlation between $X_1$ and $X_2$ is $\rho = 0.9$.

\begin{figure}[ht!]
    \centering
    \includegraphics[width=0.75\linewidth]{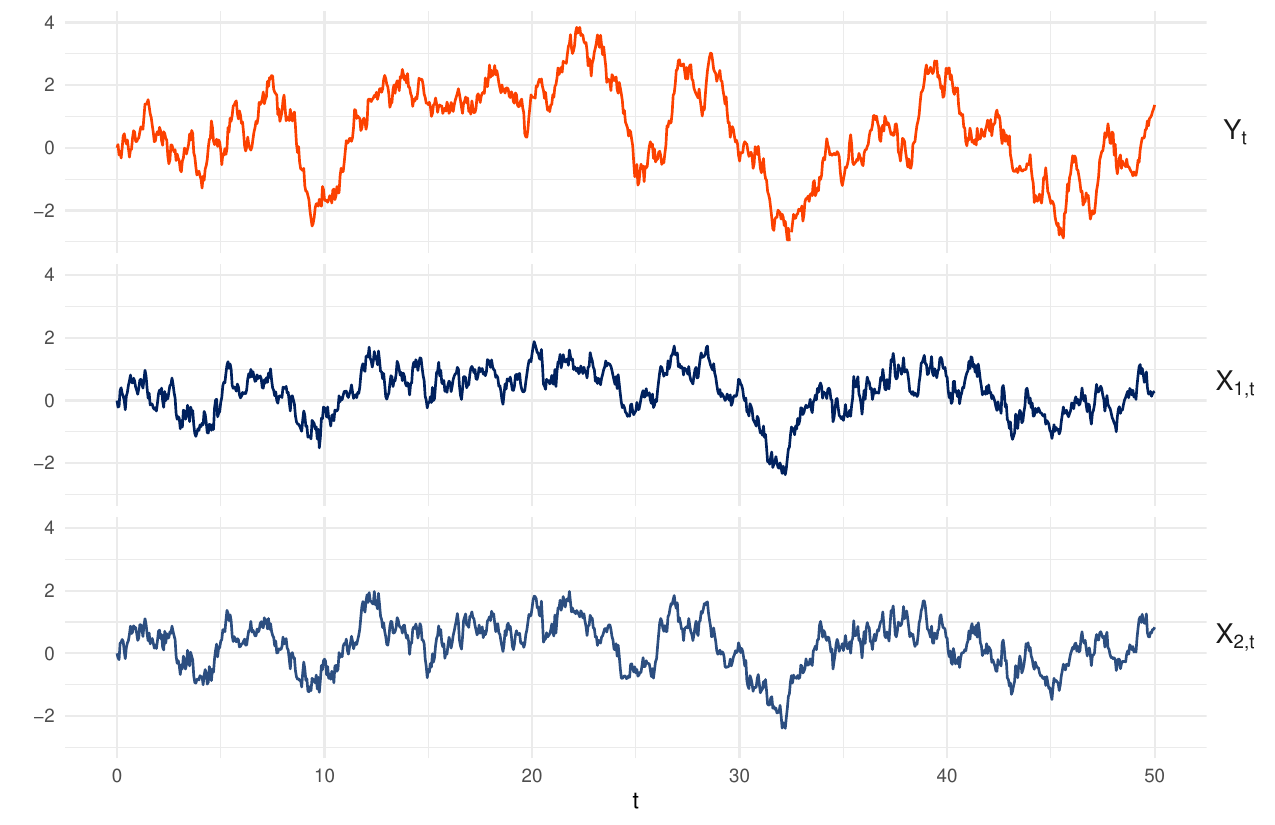}
    \caption{A sample path of the process $(Y, X_1, X_2)$ that satisfies a SDE with linear drift \eqref{eq:reg_model} and diffusion matrix \eqref{eq:diff-corr}.}
    \label{fig:enter-label}
\end{figure}

We consider the metrics \emph{accuracy} and \emph{selection}. Accuracy measures proportion of parameters correctly included/excluded, selection measures the probability of including all of the true predictors in the regression equation. In the $d=2$ case both the true coefficients $\alpha_1$ and $\alpha_2$ of $X_1$ and $X_2$ respectively are non-zero.

The simulation setting is as follows. We repeated $N=10^3$ simulations from the process $(Y, X_1, X_2)$ with linear drift \eqref{eq:reg_model} and diffusion matrix \eqref{eq:diff-corr}, with increasing sample sizes equal to $n=250$, $n=1000$ and $n=10\,000$, according to a high-frequency sampling scheme in order to approach the asymptotic regime. The mixing parameter $\gamma$ is set equal to 0.5. The optimal lambda value, $\lambda_{opt}$, is chosen based on an AIC criterion. Results are rounded up to the second decimal point. In this study the diffusion parameters are fixed, we focus on the drift estimation in order to isolate the impact of correlation on variable selection, in analogy with linear regression.  
 
Results are shown in \autoref{tab:stoch-reg}. We see that Elastic-Net regularization is capable of correctly dealing with the coefficients of the correlated variables, by including both of them, while simultaneously selecting the correct model. The LASSO instead tends to select only one of the two variables, with a lower percentage of model identification. This effect is most apparent for a moderate sample size. 

\begin{table}[ht!]
\centering
\caption{Stochastic Regression Simulation Results}
\label{tab:stoch-reg}
\small
\begin{adjustbox}{max width=1\textwidth,center}
\begin{tabular}{cccccccccc}
\hline
\textbf{$n$} & \textbf{$\Delta_n$} &  \textbf{$T$} & & \textbf{E-Net Accuracy} & \textbf{E-Net Selection} & & \textbf{LASSO Accuracy} & \textbf{LASSO Selection} \\
\cline{5-6}\cline{8-9}
250   & 0.1  & 25 & & 0.90 & 0.73 & & 0.88 & 0.27 \\
1000  & 0.05 & 50 & & 0.93 & 0.89 & & 0.92 & 0.6 \\
10\,000 & 0.01 & 100 & & 0.97 & 0.98 && 0.97 & 0.87 \\
\hline
\end{tabular}
\end{adjustbox}
\end{table}

Let $\hat{\alpha}_{n}^{(k)}$ denote the estimate obtained at simulation $k$ for the drift parameter whose true values are denoted by $\alpha_0$. We evaluate the performance of the Elastic-Net estimator LASSO estimator by computing the empirical mean square errors: \begin{equation}
\widehat{\mathsf{MSE}}_j = \frac{1}{N} \sum_{k=1}^N (\hat{\alpha}_{n,j}-\alpha_{0,j})^2 \qquad \text{for } j=1,\ldots,\p   
\end{equation}

The results obtained with $N=10^3$ simulations, as presented in \autoref{tab:results-distr}, demonstrate that the estimates generated by Elastic-Net method exhibit superior performance in terms of empirical mean square error compared to those produced by the LASSO across all three scenarios. It is noteworthy that the estimates derived using the Elastic-Net exhibit a lower mean square error for the parameters $\alpha_1$ and $\alpha_2$, which, as shown in \eqref{eq:reg_model}, are associated with the correlated variables, particularly in the case of small sample sizes. 

To provide a clear view of the differences in the estimates obtained with Elastic-Net and LASSO, \autoref{fig:distrPar} shows the empirical distributions of the parameters $\alpha_1$ and $\alpha_2$. 

In adherence to the theoretical framework, which asserts that LASSO tends to select only one of the correlated variables, unlike Elastic-Net, which does not shrink the parameters associated to correlated variables to zero, and in alignment with the findings depicted in \autoref{tab:stoch-reg} and \autoref{tab:results-distr}, the plots show that Elastic-Net exhibits lower percentages of parameter shrinkage across all three scenarios. This phenomenon is especially discernible in scenarios involving smaller sample sizes.

\autoref{tab:results-distr} provides a summary of the results obtained from the simulation study.

\begin{figure}[ht]
	\centering
	\begin{subfigure}[b]{0.5\textwidth}
		\includegraphics[width=\textwidth]{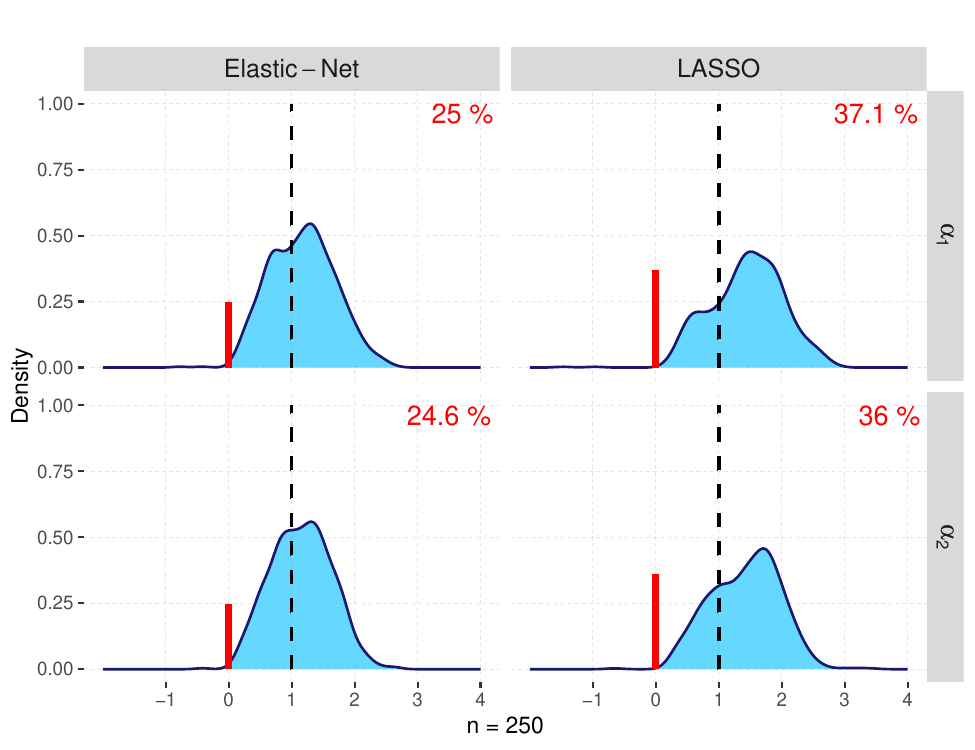}
	\end{subfigure}
	\begin{subfigure}[b]{0.5\textwidth}
		\includegraphics[width=\textwidth]{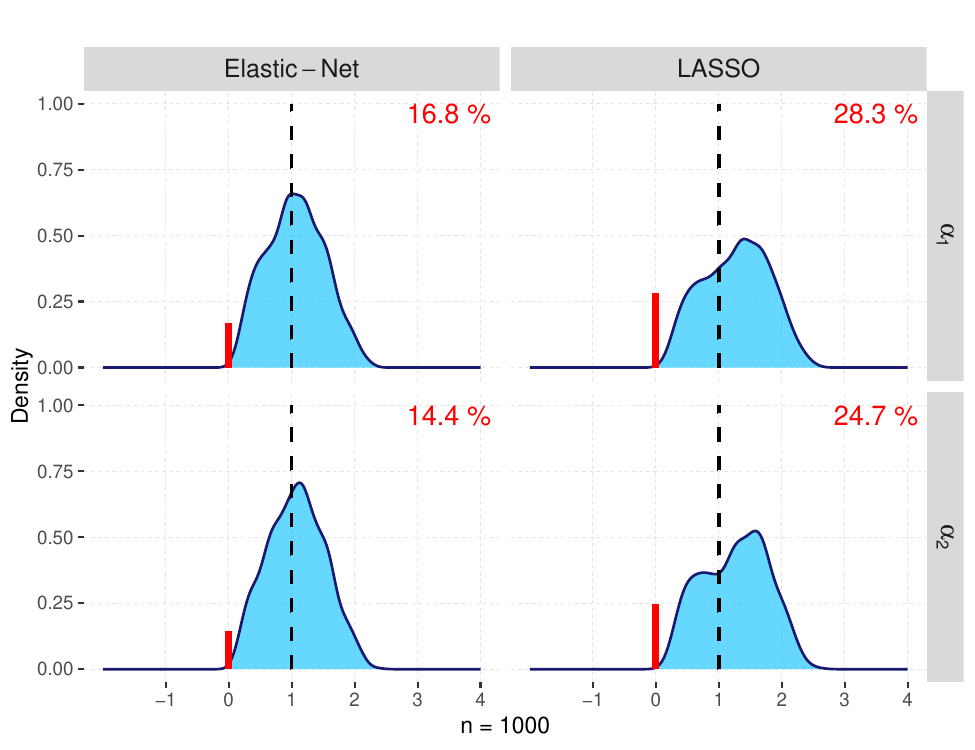}
	\end{subfigure}
	\begin{subfigure}[b]{0.5\textwidth}
		\includegraphics[width=\textwidth]{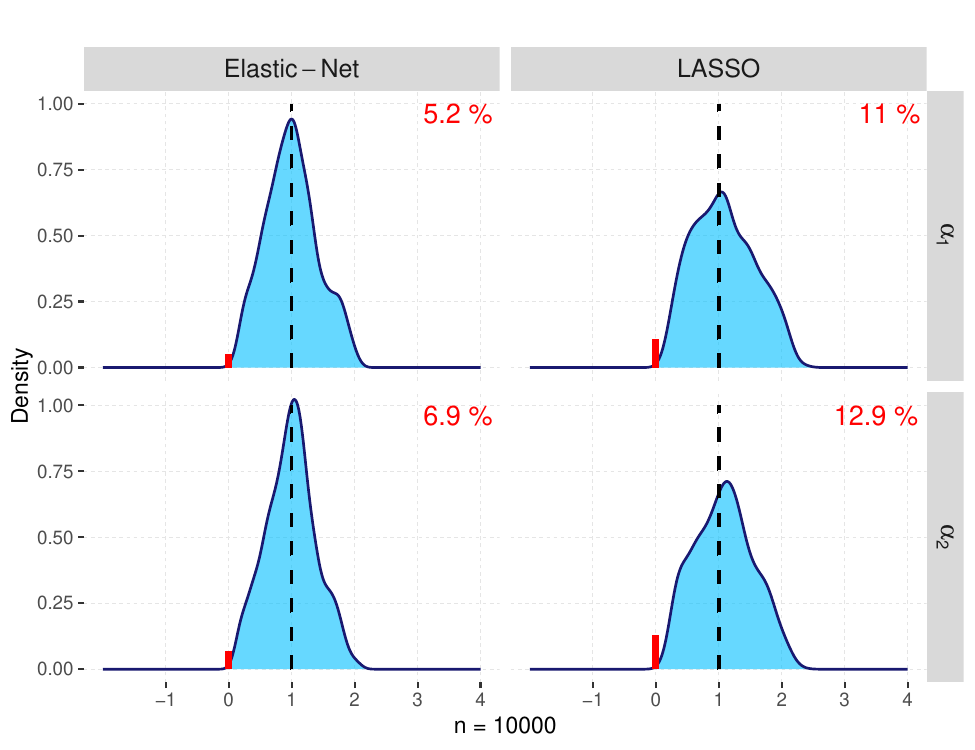}
	\end{subfigure}
	\caption{Distribution of parameters $\alpha_1$ and $\alpha_2$, with increasing sample size. For each parameter-method combination, the plot shows in red the proportion of parameters estimated as zero by the method under consideration, while in blue the density of parameters estimated as non-zero. The true parameter values are shown with black dashed lines. The integration of the continuous and discrete components in each plot is normalized to 1.}
	\label{fig:distrPar}
\end{figure}

\begin{table}[ht!]
    \centering
    \caption{Summary of estimated parameters across different sample sizes $n$.}
    \subcaptionbox{$n=250$}{
    \begin{adjustbox}{max width=1\textwidth,center}
    \begin{tabular}{lcccccccccccccc}
    \hline
       Par.  & \multicolumn{1}{l}{True} & \multicolumn{5}{l}{Elastic-Net} & & \multicolumn{5}{l}{LASSO} \\
        
    \cline{3-7}\cline{9-13}
             &     \multicolumn{1}{l}{Value}   &  $\widehat{MSE}$    & \raisebox{3pt}{$\underset{(Std. Err.)}{Avg.}$} & $q_{0.25}$ & $Me$ & $q_{0.75}$ && $\widehat{MSE}^{\phantom{\hat{a}}}$  & \raisebox{3pt}{$\underset{(St. Err.)}{Avg.}$} & $q_{0.25}$ & $Me$ & $q_{0.75}$ \\
             \hline
 $\alpha_1$ & 1 & 0.469 & $\underset{(0.674)}{0.876}$  & 0 & 0.911 & 1.392 && 0.705 & $\underset{(0.835)}{0.909}$  & 0 & 0.943 & 1.640 \\
 $\alpha_2$  & 1 & 0.436 & $\underset{(0.647)}{0.867}$ & 0.167 & 0.922 & 1.369 &&  0.664  & $\underset{(0.810)}{0.909}$ & 0 & 0.945 & 1.639 \\
 $\alpha_3$  & 1 & 0.052 & $\underset{(0.210)}{0.910}$  & 0.765 & 0.893 & 1.039 &&   0.060  &$\underset{(0.234)}{0.926}$ & 0.759 & 0.907 & 1.069 \\
 $\alpha_{01}$ & 0 & 0.011 & $\underset{(0.103)}{0.004}$  & 0 & 0 & 0 && 0.007  & $\underset{(0.086)}{0.001}$  & 0 & 0 & 0 \\
 $\alpha_{02}$ & 0 & 0.011 & $\underset{(0.104)}{0.002}$  & 0 & 0 & 0 && 0.007 & $\underset{(0.085)}{0}$ & 0 & 0 & 0 \\
 $\alpha_{11}$ & 1 & 0.060 & $\underset{(0.235)}{0.929}$ & 0.768 & 0.906 & 1.070 && 0.073 & $\underset{(0.256)}{0.912}$  & 0.731 & 0.889 & 1.070 \\
 $\alpha_{12}$ & 1 & 0.058 & $\underset{(0.230)}{0.928}$ & 0.768 & 0.913 & 1.071 &&  0.070  & $\underset{(0.249)}{0.911}$ & 0.741 & 0.884 & 1.068 \\
 \hline 
 \multicolumn{13}{c}{} 
    \end{tabular}\end{adjustbox}}
\subcaptionbox{$n=1000$}{
    \begin{adjustbox}{max width=1\textwidth,center}
    \begin{tabular}{lcccccccccccc}
    \hline
       Par.  & \multicolumn{1}{l}{True} & \multicolumn{5}{l}{Elastic-Net} & & \multicolumn{5}{l}{LASSO}  \\
    \cline{3-7}\cline{9-13} 
             &     \multicolumn{1}{l}{Value}   &  $\widehat{MSE}$     & \raisebox{3pt}{$\underset{(Std. Err.)}{Avg.}$} & $q_{0.25}$  & $Me$ & $q_{0.75}$ && $\widehat{MSE}^{\phantom{\hat{a}}}$  & \raisebox{3pt}{$\underset{(St. Err.)}{Avg.}$} &  $q_{0.25}$ & $Me$ & $q_{0.75}$ \\
             \hline
 $\alpha_1$ & 1 &  0.346 & $\underset{(0.577)}{0.885}$ & 0.432 & 0.939 & 1.314 &&  0.531  & $\underset{(0.724)}{0.911}$  &  0 & 0.963 & 1.528 \\
 $\alpha_2$ & 1 & 0.323 & $\underset{(0.563)}{0.919}$  & 0.525 & 0.974 & 1.334 &&   0.502  & $\underset{(0.707)}{0.947}$  & 0.251 & 1.003 & 1.550 \\
 $\alpha_3$ & 1 & 0.028 & $\underset{(0.153)}{0.929}$ & 0.820 & 0.916 & 1.022 && 0.031  & $\underset{(0.167)}{0.947}$ & 0.831 & 0.935 & 1.048 \\
 $\alpha_{01}$ & 0 &  0.001 & $\underset{(0.038)}{0}$  & 0 & 0 & 0 && 0.001  & $\underset{(0.037)}{0.001}$  & 0 & 0 & 0 \\
$\alpha_{02}$ & 0 &  0.001 & $\underset{(0.036)}{0}$  & 0 & 0 & 0 && 0.001  & $\underset{(0.035)}{0}$  & 0 & 0 & 0 \\
 $\alpha_{11}$ & 1 & 0.027 & $\underset{(0.153)}{0.938}$  & 0.834 & 0.926 & 1.038 &&  0.031  & $\underset{(0.166)}{0.942}$ & 0.825 & 0.928 & 1.050 \\
 $\alpha_{12}$  & 1 & 0.028 & $\underset{(0.155)}{0.938}$  & 0.832 & 0.925 & 1.029  && 0.031  & $\underset{(0.168)}{0.942}$  & 0.825 & 0.927 & 1.044 \\
 \hline
 \multicolumn{13}{c}{} 
    \end{tabular}\end{adjustbox}}
    \subcaptionbox{$n=10\,000$}{
    \begin{adjustbox}{max width=1\textwidth,center}
    \begin{tabular}{lcccccccccccc}
    \hline
       Par.  & \multicolumn{1}{l}{True} & \multicolumn{5}{l}{Elastic-Net} & & \multicolumn{5}{l}{LASSO}  \\
    \cline{3-7}\cline{9-13} 
             &     \multicolumn{1}{l}{Value}   &  $\widehat{MSE}$     & \raisebox{3pt}{$\underset{(Std. Err.)}{Avg.}$} & $q_{0.25}$ & $Me$ & $q_{0.75}$ && $\widehat{MSE}^{\phantom{\hat{a}}}$  & \raisebox{3pt}{$\underset{(St. Err.)}{Avg.}$} & $q_{0.25}$ & $Me$ & $q_{0.75}$ \\
             \hline
 $\alpha_1$ & 1 & 0.214 & $\underset{(0.459)}{0.947}$  & 0.643 & 0.958 & 1.244 & & 0.331 & $\underset{(0.575)}{0.967}$ & 0.539 & 0.980 & 1.400 \\
 $\alpha_2$ & 1 & 0.212 & $\underset{(0.454)}{0.922}$ & 0.634 & 0.964 & 1.207 & & 0.324 &   $\underset{(0.567)}{0.945}$  & 0.524 & 0.994 & 1.341  \\
$\alpha_3$ & 1 & 0.012 & $\underset{(0.098)}{0.950}$  & 0.884 & 0.940 & 1.013 & & 0.012 &  $\underset{(0.104)}{0.970}$  & 0.898 & 0.961 & 1.036 \\
 $\alpha_{01}$ & 0 & $<10^{-3}$ & $\underset{(0.020)}{0}$  & 0 & 0 & 0 &  & $<10^{-3}$ & $\underset{(0.021)}{0}$ & 0 & 0 & 0 \\
 $\alpha_{02}$ & 0 & $<10^{-3}$ & $\underset{(0.022)}{0.001}$  & 0 & 0 & 0 & & 0.001 & $\underset{(0.023)}{0.001}$  & 0 & 0 & 0  \\
$\alpha_{11}$ & 1 & 0.013 &  $\underset{(0.107)}{0.959}$  & 0.885 & 0.956 & 1.027 & & 0.013  &  $\underset{(0.113)}{0.974}$  & 0.895 & 0.970 & 1.045  \\
 $\alpha_{12}$ & 1 & 0.012 & $\underset{(0.104)}{0.962}$  & 0.889 & 0.963 & 1.030 & & 0.012 &  $\underset{(0.109)}{0.977}$ & 0.901 & 0.973 & 1.048 \\
 \hline
    \end{tabular}\end{adjustbox}}
    \label{tab:results-distr}
\end{table}

\textbf{Case} $\bm{d >2}$. In this scenario, we consider the case where some of the coefficients of processes $X_1, \ldots, X_d$ are zero. 
We assume the following correlation structure:
$
\Sigma=(\Sigma_{ij})_{i,j} = (\rho^{|i - j|})_{i,j}
$
for $\rho = 0.5, 0.8, 0.9$. 
The drift parameters used in the simulation are
$$(\alpha_1, \ldots, \alpha_{\lfloor d/2\rfloor}, \alpha_{\lfloor d/2\rfloor+1}, \ldots, \alpha_d, \alpha_{d+1}) = (1/d, \ldots, 1/d, 0, \ldots, 0, 2),$$ with $\alpha_{0,j}=1, \alpha_{1,j} = 1, j \in 1, \ldots, d$.
Results are shown in figure \autoref{fig:rho-mixing}. 

\textit{Effect of the Elastic-Net parameter}. From \autoref{fig:rho-mixing} we see how the Elastic-Net parameter $\gamma$ controls the trade-off in variable selection. For values of $\gamma$ closer to 1, i.e. the LASSO case,  the probability of correctly selecting the model decreases drastically due to the presence of correlated processes that the model cannot properly distinguish, especially for larger values of $\rho$. 
Conversely for smaller values of the Elastic-Net parameter $\gamma$ this effect can be controlled. The cost is a smaller ``accuracy", due to a higher number of variables falsely included in the model. 

\begin{figure}[ht!]
    \centering
    \includegraphics[width=0.6\linewidth]{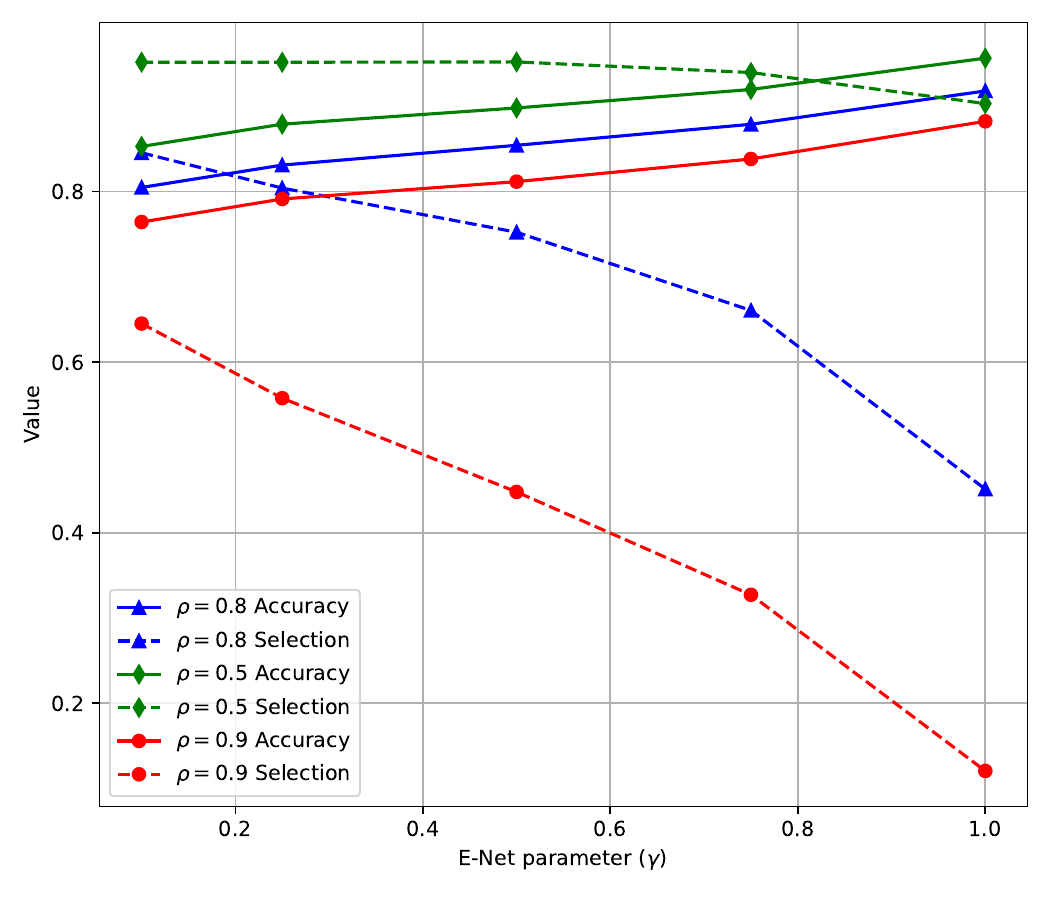}
    \caption{Accuracy and selection for different values of the mixing parameter and correlation parameter.}
    \label{fig:rho-mixing}
\end{figure}

\textit{Forecasting performance.}
We now turn our attention to assessing the predictive efficacy of the Elastic-Net estimator. To this end, we conduct $N=10^4$ simulations from the model specified in \eqref{eq:reg_model} and \eqref{eq:diff-corr}, considering the scenarios outlined in \autoref{tab:stoch-reg}. We observe the data up to time $T_n = T$, while our goal is to predict future values up to time $T+1$, with a step size of $\Delta_n\in\{0.05, 0.01\}$. For each simulation $k$, we estimate $X_{T+h}^{(k)}$ at time $T+h$, denoted by $\hat{X}_{T+h}^{(k)}$, is performed using the one-step-ahead prediction as described in \eqref{eq:onestep-pred}, with $h$ varying from 0 to 1, i.e. $h\in\{0,\Delta_n, 2\Delta_n, \ldots, 1\}$. We can calculate the empirical mean absolute prediction error for the Elastic-Net and LASSO estimators as follows:
\begin{equation}\label{empir-mae}
    \widehat{\mathsf{MAE}}_k = \frac{1}{N} \sum_{k=1}^N |X^{(k)}_{T+h}-\hat{X}^{(k)}_{T+h}| \qquad \text{with } h\in\{0, \Delta_n, 2\Delta_n, \ldots, 1\}.
\end{equation}

Similarly, the non-asymptotic prediction error bound, at each time point $T+h$, can be computed in a straightforward manner up to constant factors, as described in \eqref{eq:pred-bound-intuition}. This allows for comparison with the empirical mean absolute prediction error calculated for the Elastic-Net, LASSO and QMLE estimators.  The results are shown in the graphs in \autoref{fig:maemore}. In the first two graphs, \autoref{fig:sub1-maemore2} and \autoref{fig:sub2-maemore2}, the number of parameters is the same, $\p=7$, while the sample size $n$ and the step size $\Delta_n$ vary between the two figures. 
The approximate prediction error bound (dashed line), calculated according to \eqref{eq:pred-bound-intuition}, thus up to constant factors, correctly predicts the shape of the empirical mean absolute prediction error \eqref{empir-mae} for the Elastic-Net Estimator. It can be noted that for very large sample sizes the error curves align, consistently with the expected asymptotic behaviour of each estimator. Figures \ref{fig:sub3-maemore2} and \ref{fig:sub4-maemore2}, show the result for the same sample size $n$, and increasing number of parameters $\p$. It can be observed that the gap in predictive performance widens, with the Elastic-Net estimator being more stable as $\p$ and the prediction horizon $h$ increase. Our results show that the introduction of an Elastic-Net tunable penalty can considerably improve our ability to forecast a correlated dynamical system, while the LASSO penalty proves to be too rigid.

\begin{figure}[ht!]
	\centering
    \begin{subfigure}[b]{0.49\textwidth}
		\includegraphics[width=\textwidth]{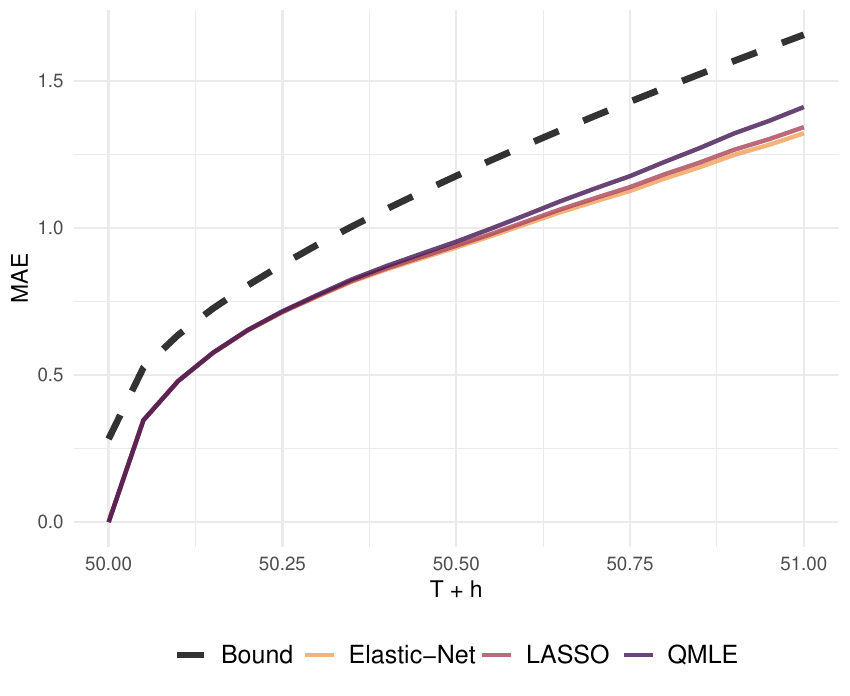}
		\caption{$n = 1000, \Delta_n=0.05 , d=2, \p = 7$}
		\label{fig:sub1-maemore2}
	\end{subfigure}
	\hfill 
	\begin{subfigure}[b]{0.49\textwidth}
		\includegraphics[width=\textwidth]{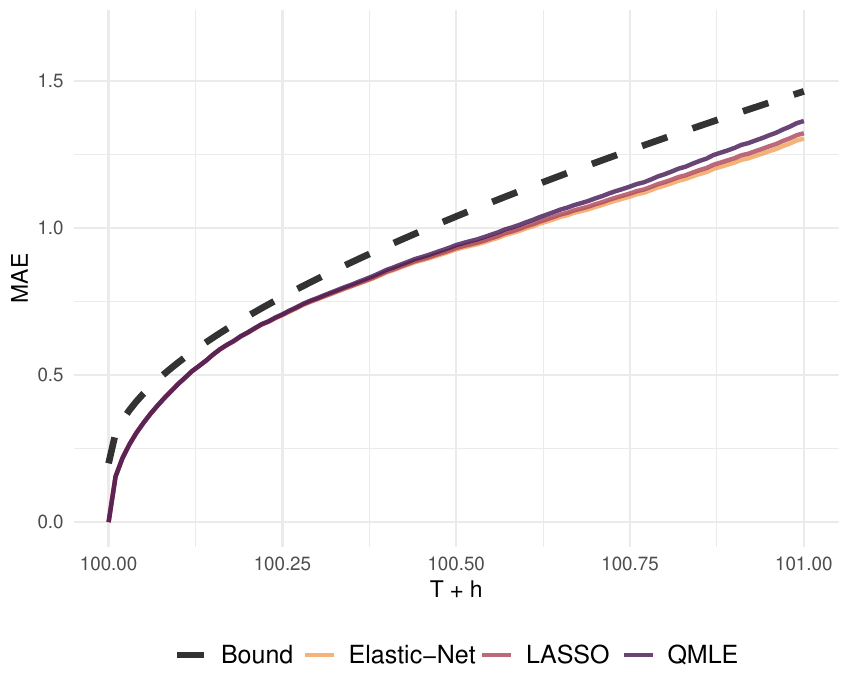}
		\caption{$n = 10\,000, \Delta_n=0.01 , d=2, \p = 7$}
		\label{fig:sub2-maemore2}
	\end{subfigure}
	\hfill 
	\begin{subfigure}[b]{0.49\textwidth}
		\includegraphics[width=\textwidth]{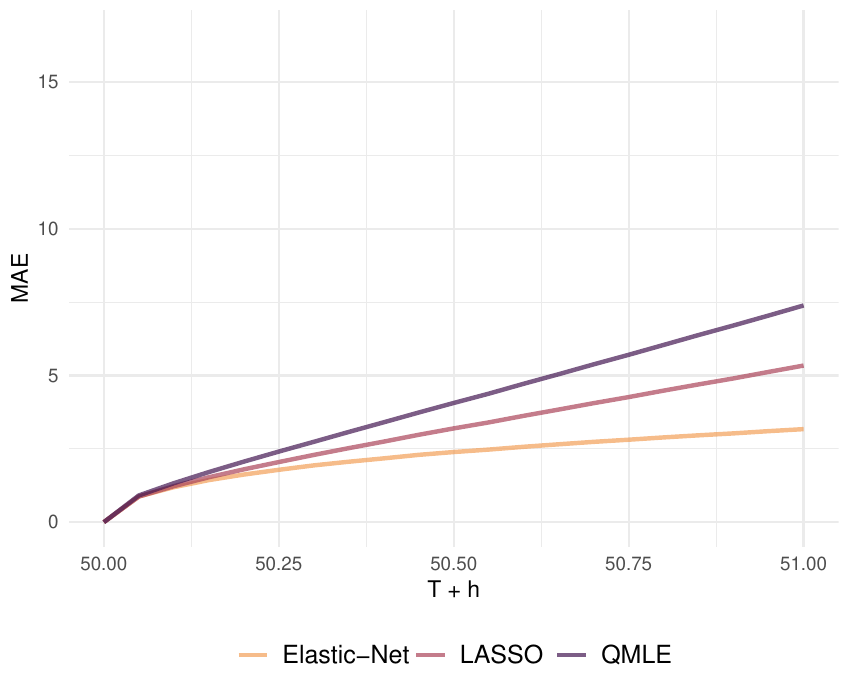}
		\caption{$n = 1000, \Delta_n=0.05 , d=16, \p = 49$}
		\label{fig:sub3-maemore2}
	\end{subfigure}
	\hfill 
	\begin{subfigure}[b]{0.49\textwidth}
		\includegraphics[width=\textwidth]{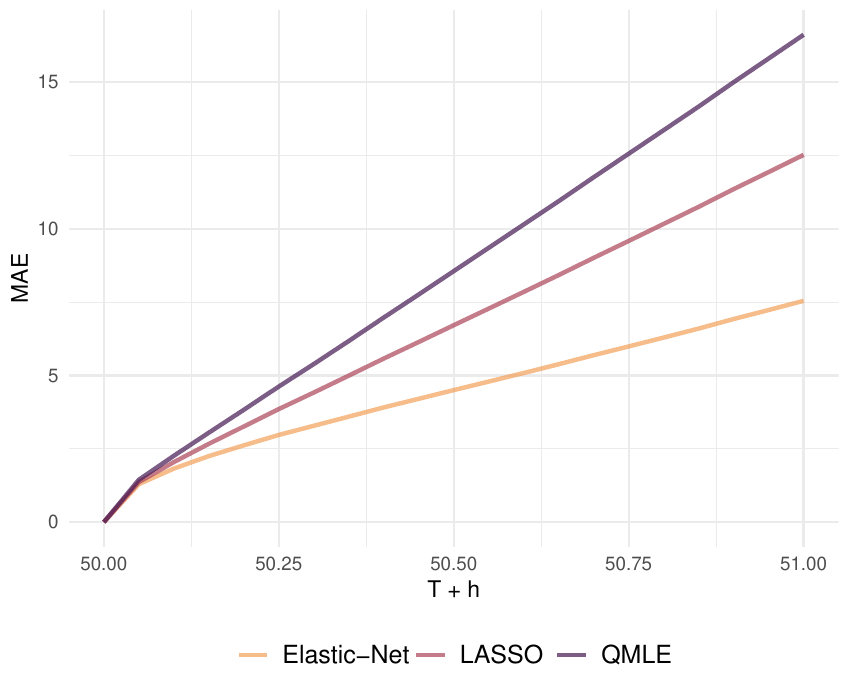}
		\caption{$n = 1000, \Delta_n=0.05 , d=33, \p = 100$}
		\label{fig:sub4-maemore2}
	\end{subfigure}
	\caption{Comparison of empirical mean absolute prediction errors for Elastic-Net, LASSO and QMLE estimators with respect of variation in the prediction horizon $h$, sample size $n$ and the number of parameters. The graphs in the same row share the same scale on the $y$-axis. In \autoref{fig:sub1-maemore2} and \autoref{fig:sub2-maemore2}, the non-asymptotic prediction error bound, calculated according to \eqref{eq:pred-bound-intuition}, up to constant factors, is also represented by dashed line.}
	\label{fig:maemore}
\end{figure}

\section{Real data application: well-being data analysis}\label{sec:real}
In this section, we use our technique to discover association dependency patterns in real-world scenarios.  
Motivated by \cite{carpi2022}, we apply our methodology to the analysis of subjective well-being. In that study, the authors examine the effect of the COVID-19 pandemic on subjective well-being (SWB), assessed through Twitter data from Japan and Italy. The study analyzes various data sources, including climate and air quality, COVID-19 cases and deaths, survey data on symptoms, Google search trends, policy measures, mobility patterns, economic indicators, and proxies for health and stress. We refer to the original study for a complete description of the dataset.
In \autoref{fig:wb-data} we show the response variable and one of the predictors included in the model. 

\begin{figure}[ht!]
	\centering
	\begin{subfigure}[b]{0.5\textwidth}
		\includegraphics[width=\textwidth]{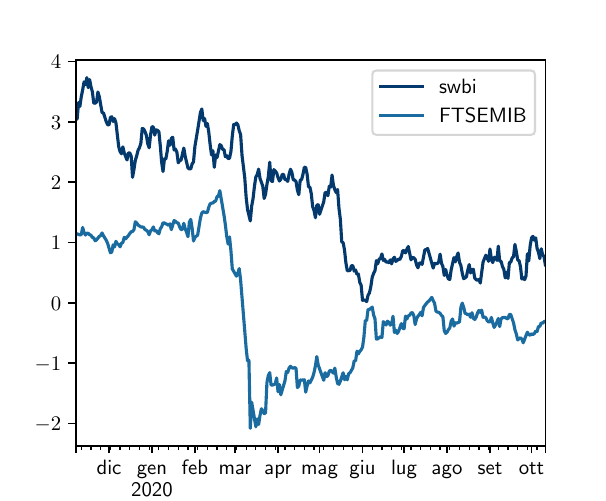}
		\caption{Italy}
	\end{subfigure}
	\hfill 
	\begin{subfigure}[b]{0.45\textwidth}
		\includegraphics[width=\textwidth]{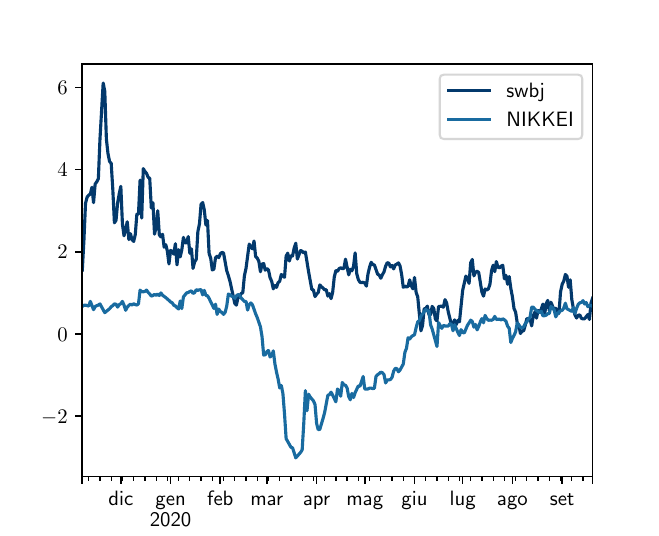}
		\caption{Japan}
	\end{subfigure}
	\caption{Response variable (subjective well being), and one predictor (country's economic index).}
	\label{fig:wb-data}
\end{figure}

We split the data in shorter 1-month or 3-months time windows, and we carry out an Elastic net analysis on each time frame. Such a framework, called Dynamic Elastic Net, allows to mitigate the effect of potential non-stationarity in the data (see \cite{carpi2022}). 
The goal is to identify which variables contribute to the SWB and to analyze how the association patterns change over time.  
We finally compare the variables in the Importance-Frequency space in order to assess their overall relative importance. 



We treat the SWB variable as a response $Y$ in a stochastic regression model
and the other variables as predictors $X_1, \ldots, X_p$. 
We then consider a regression-type SDE of the form
\begin{gather}
    \de Y_t = \left( \mu_Y - \alpha_{Y,0} Y_t - \sum_{j=1}^p \alpha_{Y,j} X_{j,t} \right) \de t + \sigma_Y \sqrt{Y_t} \de W_{1,t}
    \\
    \de X_{j,t} = \left( \mu_j - \alpha_{X,j} X_{j,t} \right) \de t + \sigma_j \de W_{j+1,t}
\end{gather}
where $W_1, \ldots, W_{j+1}$ are independent Brownian motions and $X_j$ are Vasicek SDE. 
We use Elastic-Net estimation in order to select the true non-zero coefficients in the model. More in detail, we compute an initial quasi-likelihood estimate for all the parameters in the model, and then we apply our adaptive estimator by restricting our attention to the parameters $\alpha_{Y,j}, j=1, \ldots p$. We repeat this process for subsequent 30 and 90 days windows, and we normalize the absolute value of the coefficients in order to have comparable results over different sets of predictors. We run our model on both Italy and Japan data. 
The results for a 90 days time frame are shown in \autoref{fig:swb-coef}.

\begin{figure}[ht!]
	\centering
	\begin{subfigure}[b]{0.9\textwidth}
		\includegraphics[width=\textwidth]{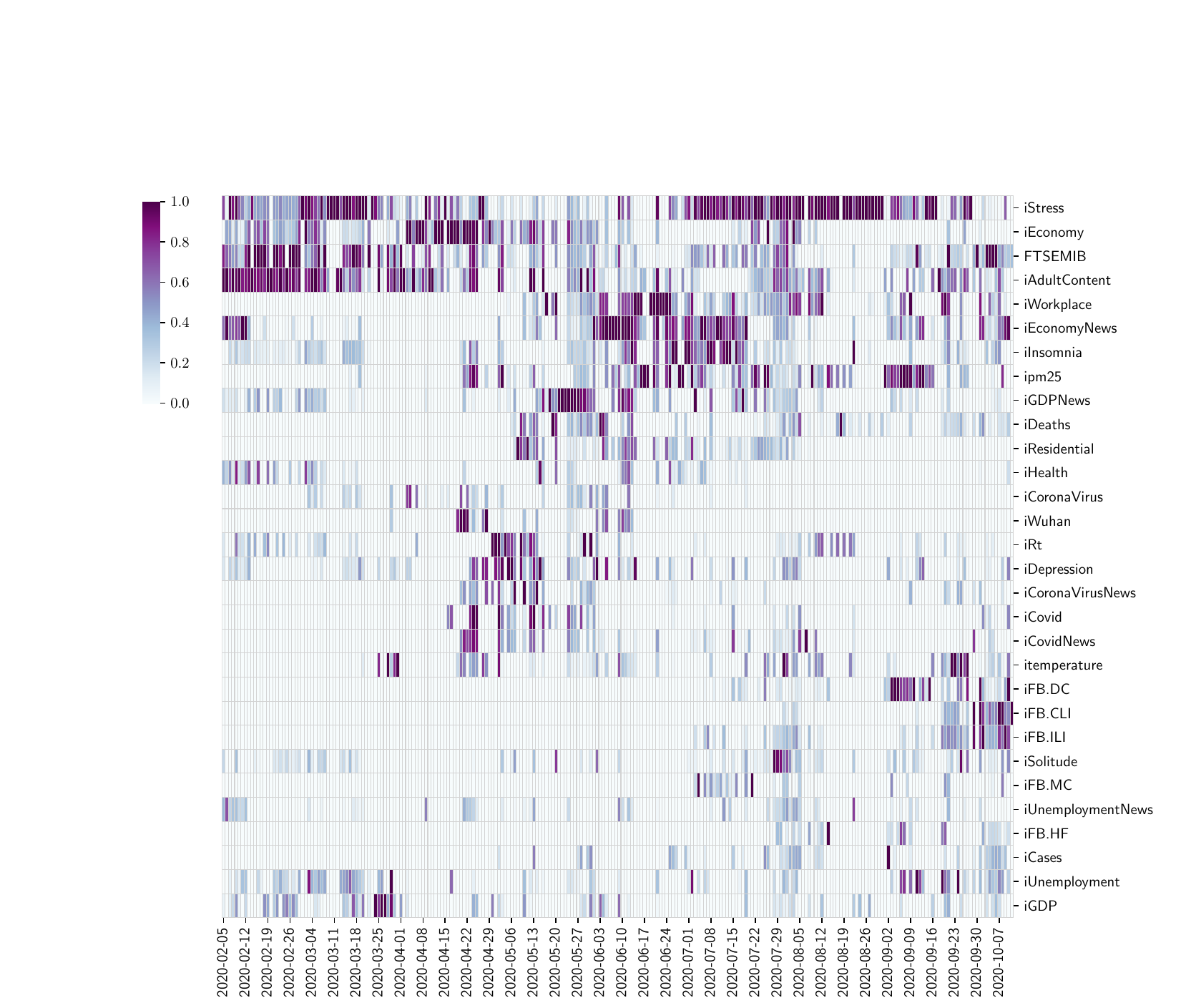}
		\caption{Italy}
	\end{subfigure}
	\begin{subfigure}[b]{0.9\textwidth}
		\includegraphics[width=\textwidth]{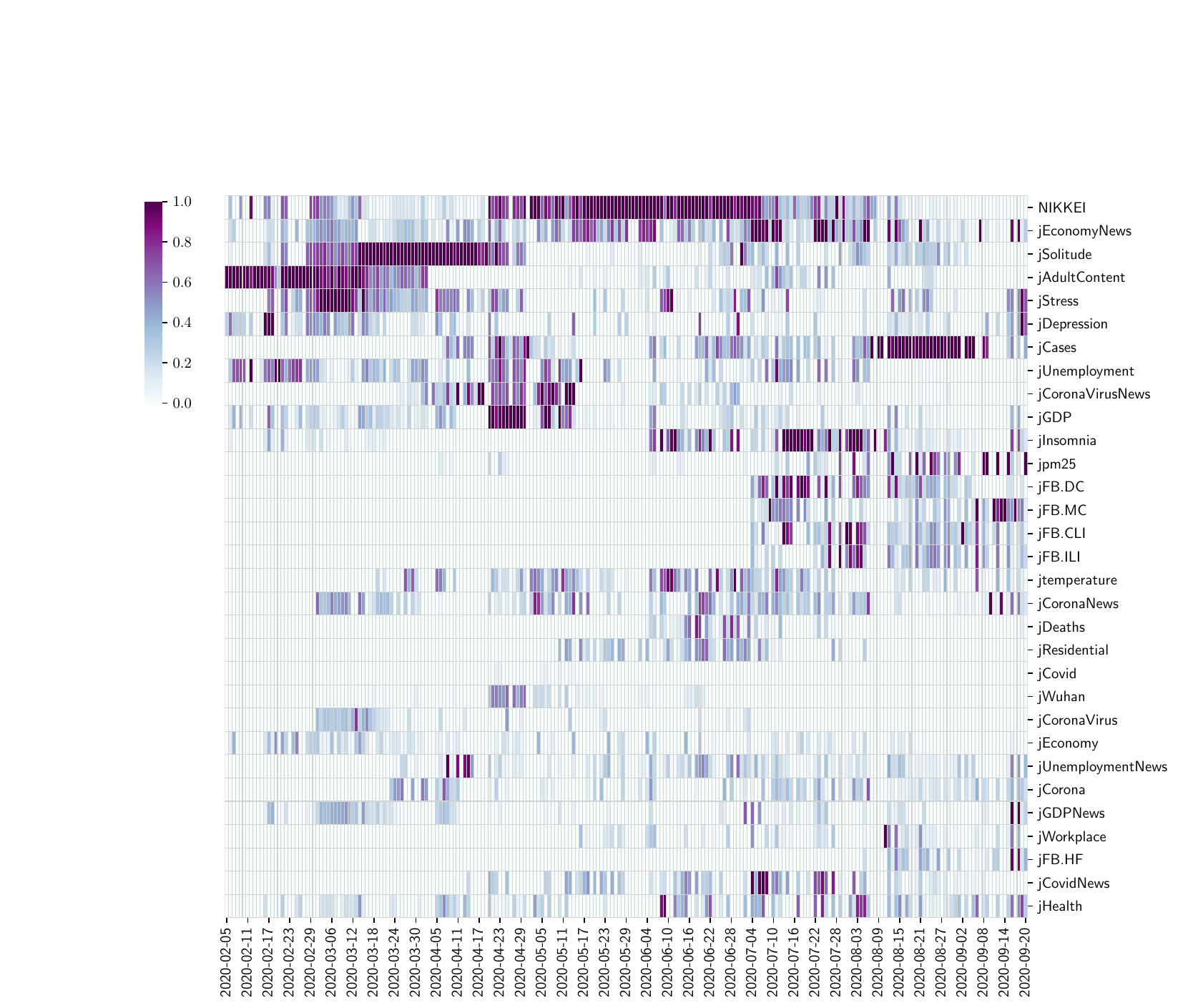}
		\caption{Japan}
	\end{subfigure}
	\hfill 
	\caption{Normalized coefficients $\alpha_{Y,j}$, 90 days time window.}
	\label{fig:swb-coef}
\end{figure}

Finally, we compare the results in the Importance-Frequency space. The frequency is given by the number of times that a variable is selected over the different time windows. The importance component is computed by first ranking the selected variables in each time window, and then averaging the results (normalized in 0-100).  
The results are shown in \autoref{fig:swb-imp}. 
Variables in the upper right corner have a stronger effect on the response, in the sense that they are selected a larger number of times and, whenever they are selected, they tend to have the largest effect. Vice versa variables placed in the bottom left corner are comparatively less important. 

We note how in Italy, \autoref{fig:swb-imp} (A), in the shorter time frame (31 days), variables related to mobility restrictions (residential) or to the pandemic spread (deaths, Rt) tend to be more important than in the long time. In the long run (90 days) psychological stress and economy have a larger impact. 
When comparing Italy and Japan, \autoref{fig:swb-imp} (B), we note that in Japan the number of Covid cases (jCases) have a larger effect, while in Italy deaths (iDeaths) are more impactful. Economy variables are generally more relevant in Japan.

\textbf{Code availability.} The code for the numerical analysis presented in this paper is publicly available at online repository \cite{enet-sde}.

\begin{figure}[ht!]
	\centering
	\begin{subfigure}[b]{0.8\textwidth}
		\includegraphics[width=\textwidth]{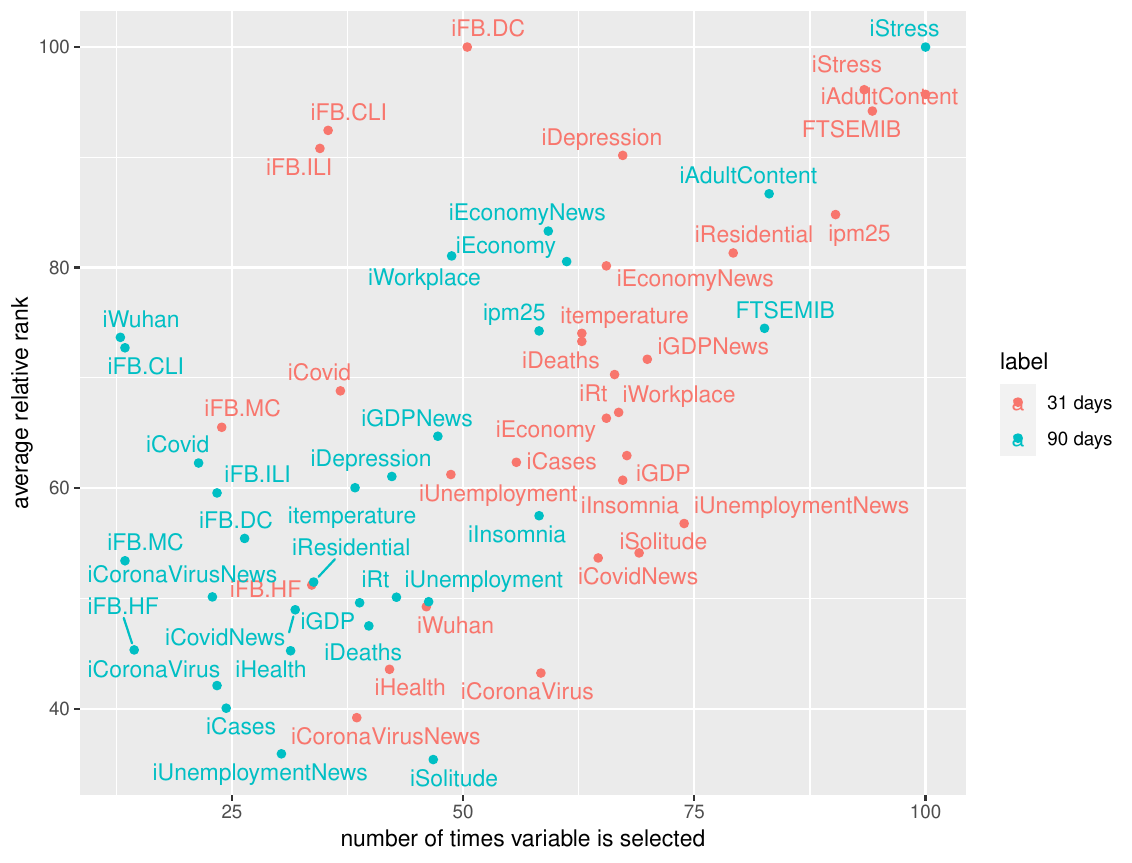}
		\caption{Italy data, comparison between 31 and 90 days period}
	\end{subfigure}
	\begin{subfigure}[b]{0.8\textwidth}
		\includegraphics[width=\textwidth]{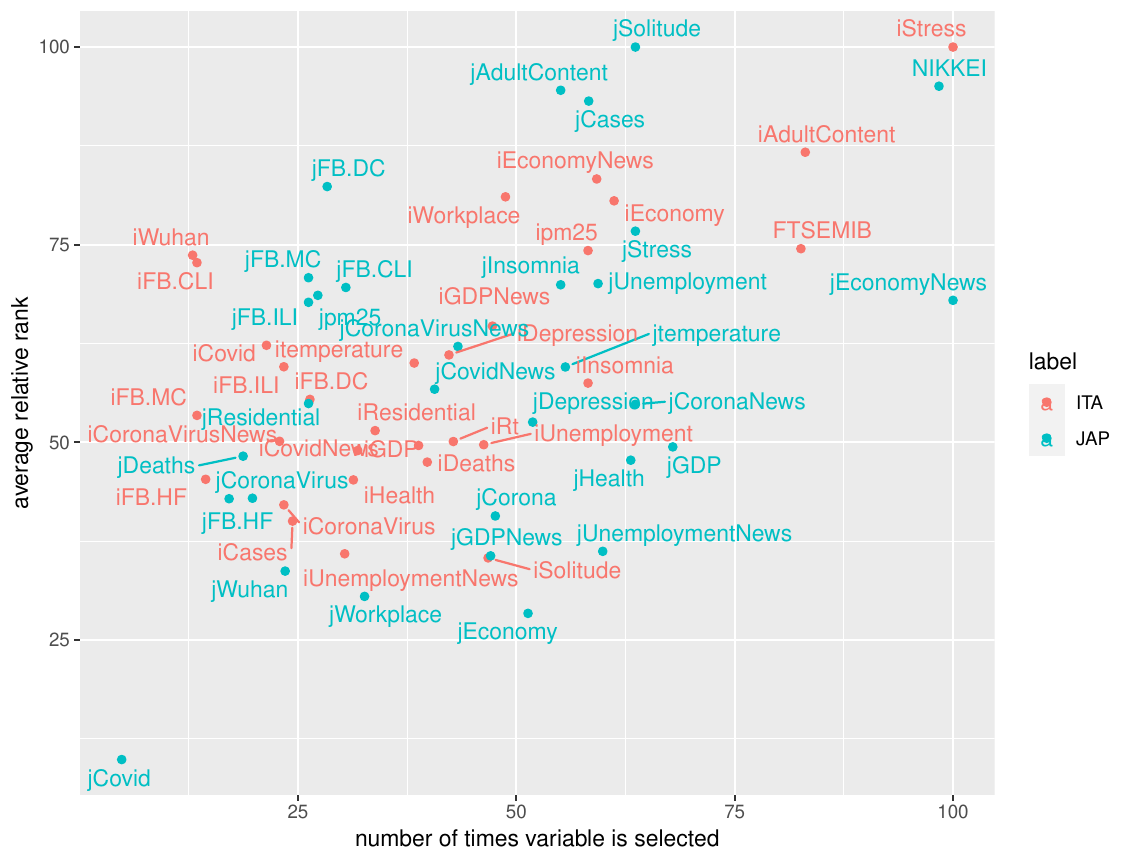}
		\caption{90 days time frame, comparison between Italy and Japan results}
	\end{subfigure}
	\hfill 
	\caption{Variables representation in the frequency-importance space.}
	\label{fig:swb-imp}
\end{figure}

\newpage

\section{Proofs}
\begin{proof}[Proof of Theorem \ref{thm:asoracle}] In the proof we use the same approach adopted in \cite{de2021regularized}. In the present setting it is necessary to handle the Ridge penalty.

    For the sake of simplicity, some of the dependency on $n$, indicated as a subscript, will not be taken into account in the following steps (i.e. $\hat{\theta}=\hat{\theta}_n$, $\tilde{\theta}=\tilde{\theta}_n$, $A=A_n$, $\hat{G}=\hat{G}_n$, $\hat{D}=\hat{D}_n$, $\kappa_j=\kappa_{n,j}$,  $\pi_j=\pi_{n,j}$).
    
    (i) Let us start with the consistency. We can write down
    \begin{align*}
        0 \geq & \mathcal{F}_n(\hat{\theta},\tilde\theta)-\mathcal{F}_n(\theta_0, \tilde\theta)\\
       = & \langle \hat G, (\hat\theta-\theta_0)^{\otimes 2}\rangle+2(\hat{\theta}-\theta_0)^\tr\hat{G}(\theta_0-\tilde{\theta})\\
       & +|\hat\alpha|_{1,\kappa}-|\alpha_0|_{1,\kappa}+|\hat\beta|_{1,\pi}-|\beta_0|_{1,\pi}+\lambda_{2}(|\hat{\alpha}|^2-|\alpha_0|^2)+\gamma_{2}(|\hat\beta|^2-|\beta_0|^2)\\
       \geq & \langle \hat G, (\hat\theta-\theta_0)^{\otimes 2}\rangle+2(\hat{\theta}-\theta_0)^\tr\hat{G}(\theta_0-\tilde{\theta})\\
       & +\sum_{j=1}^{\p^0} \kappa_j (|\hat{\alpha}_j|-|\alpha_{0,j}|)+\sum_{h=1}^{\q^0}\pi_h(|\hat{\beta}_h|-|\beta_{0,h}|)+\lambda_2\sum_{j=1}^{\p^0}(|\hat{\alpha}_j|^2-|\alpha_{0,j}|^2)+\gamma_2\sum_{h=1}^{\q^0}(|\hat{\beta}_h|^2-|\beta_{0,h}|^2)\\
    \end{align*}

From the well-known inequality $||x|-|y||\leq |x-y|,$ immediately it follows
\begin{align*}
    \sum_{j=1}^{\p^0} \kappa_j (|\hat{\alpha}_j|-|\alpha_{0,j}|) & \geq - \p^0 a_n |\hat{\alpha}-\alpha_0|,\\
    \sum_{h=1}^{\q^0}\pi_h(|\hat{\beta}_h|-|\beta_{0,h}|) & \geq - \q^0 c_n |\hat{\beta}-\beta_0|.
\end{align*} 

Given a convex function $f$, the following inequalities hold: $$f(y)-f(x)\geq \nabla f(x)^\tr (y-x) \Rightarrow |y|^2 -|x|^2\geq 2 x^\tr(y-x),\quad x,y\in\mathbb R^d.$$ By applying this last inequality in the penalty part related to the Ridge component and the Cauchy-Schwarz inequality $|x^\tr y|\leq |x||y|$, we have:\begin{align*}
   \sum_{j=1}^{\p^0}(|\hat{\alpha}_j|^2-|\alpha_{0,j}|^2) & \geq -2\p^0 |\alpha_0||\hat{\alpha}-\alpha_0|\\
    \sum_{h=1}^{\q^0}(|\hat{\beta}_j|^2-|\beta_{0,j}|^2) & \geq -2\q^0 |\beta_0||\hat{\beta}-\beta_0|\\
\end{align*}
Therefore, we get: \begin{align*}
    0 \geq & \langle \hat G, (\hat\theta-\theta_0)^{\otimes 2}\rangle+2(\hat{\theta}-\theta_0)^\tr\hat{G}(\theta_0-\tilde{\theta})\\
     & - \p^0 \frac{a_n}{\sqrt{n\Delta_n}} |\sqrt{n\Delta_n}(\hat{\alpha}-\alpha_0)|-\q^0 \frac{c_n}{\sqrt{n}} |\sqrt{n}(\hat{\beta}-\beta_0)|\\
    & -2\frac{\lambda_2}{\sqrt{n\Delta_n}}  \p^0 |\alpha_0||\sqrt{n\Delta_n}(\hat{\alpha}-\alpha_0)|-2\frac{\gamma_2}{\sqrt{n}} \q^0 |\beta_0||\sqrt{n}(\hat{\beta}-\beta_0)|\\
    = &\langle \hat D, (A^{-1}(\hat\theta-\theta_0))^{\otimes 2}\rangle+2[A^{-1}(\hat{\theta}-\theta_0)]^\tr\hat{D}[A^{-1}(\theta_0-\tilde{\theta})]\\
    & -\left(\p^0 \frac{a_n}{\sqrt{n\Delta_n}} + \q^0 \frac{c_n}{\sqrt{n}} +2\p^0\frac{\lambda_2}{\sqrt{n\Delta_n}}|\alpha_0| + 2\q^0\frac{\gamma_2}{\sqrt{n}}  |\beta_0|\right)|A^{-1}(\hat{\theta}-\theta_0)|
\end{align*}
and by using the same arguments in the proof of Theorem 1 in \cite{de2021regularized}, one has
\begin{equation}\label{consistency}
    |A^{-1}(\hat{\theta}-\theta_0)| \leq \lVert \hat{D}^{-1}\rVert \left(2 \rVert \hat{D}  \rVert |A^{-1}(\tilde{\theta}-\theta_0)| + \p^0 \frac{a_n}{\sqrt{n\Delta_n}} + \q^0 \frac{c_n}{\sqrt{n}} + 2\p^0 \frac{\lambda_2}{\sqrt{n\Delta_n}} |\alpha_0| + 2\q^0\frac{\gamma_2}{\sqrt{n}} |\beta_0| \right)
\end{equation}
Given that the right-hand side of the inequality is $O_p(1)$ according to the statement, we obtain $A^{-1}\left(\hat{\theta}-\theta_0\right)=O_p(1)$.

(ii) In order to prove the selection consistency, 
   we observe that $\hat{G}=\begin{pmatrix}
        \hat{G}_{\alpha}\\
        \hat{G}_{\beta}
    \end{pmatrix}$ where $\hat{G}_{\alpha}$ and $\hat{G}_{\beta}$ are random matrices of dimension $\p\times\m$ and  $\q\times\m$, respectively. 
    
     Let us assume $\hat\alpha\notin \partial \Theta_\alpha$ and $\hat{\alpha}_{j}\neq 0$  for some $j=\p^0+1,\ldots,\p$. From the KKT condition \begin{equation}
         \frac{1}{\sqrt{n\Delta_n}} \frac{\partial}{\partial \alpha_j}\mathcal{F}(\theta,\tilde\theta)\at[\Bigg]{\theta=\hat{\theta}}=0
     \end{equation} we derive  \begin{align*}
         \left|\frac{2}{\sqrt{n\Delta_n}} \hat{G}_\alpha(j) AA^{-1} (\hat{\theta}-\tilde{\theta})\right|& = \left|\frac{\kappa_{j}}{\sqrt{n\Delta_n}} \mathrm{sgn}(\hat{\alpha}_{j})+2\frac{\lambda_2}{\sqrt{n\Delta_n}}\hat{\alpha}_{j}\right|\\
         & \geq \left|\frac{b_n}{\sqrt{n\Delta_n}}\right|- \frac{\lambda_2}{\sqrt{n\Delta_n}}|\alpha_{j}|
     \end{align*}
     for some $j=\p^0+1,\ldots,\p$, with $\hat{G}_{\alpha}(j)$ the $j$-th row of $\hat{G}_{\alpha}$.
     
By consistency and the assumptions, we have that $\left|\frac{2}{\sqrt{n\Delta_n}} \hat{G}_\alpha(j) A A^{-1} (\hat{\theta}-\tilde{\theta})\right|=O_p(1)$ and  $\frac{\lambda_2}{\sqrt{n\Delta_n}}|\hat{\alpha}_{j}|=O_p(1)$, while $\frac{b_n}{\sqrt{n\Delta_n}}\overset{p}{\longrightarrow}\infty$. Therefore, for any $j=\p^0+1,\ldots,\p$:
     \begin{equation}
         P\left(\hat{\alpha}_{j}\neq 0,\hat\alpha\notin \partial \Theta_\alpha\right)\leq P\left(\left|2 \frac{1}{\sqrt{n\Delta_n}} \hat{G}_\alpha(j) A_n A_n^{-1} (\hat{\theta}-\tilde{\theta})\right|+\frac{\lambda_2}{\sqrt{n\Delta_n}}|\hat{\alpha}_{j}|\geq \frac{b_n}{\sqrt{n\Delta_n}}\right) \longrightarrow 0
     \end{equation} as $n\longrightarrow\infty$. By means of the same arguments in \cite{suzuki2020penalized}, we get $P(\hat\alpha\in \partial \Theta_\alpha)\longrightarrow 0.$ Therefore
     $$P\left(\hat{\alpha}\neq 0\right)\leq P(\hat\alpha\in \partial \Theta_\alpha)+\sum_{j=\p^0+1}^\p  P\left(\hat{\alpha}_{j}\neq 0,\hat\alpha\notin \partial \Theta_\alpha\right)\longrightarrow 0.$$

Similar steps lead to $P(\hat \beta\neq 0)\longrightarrow 0.$

(iii)
     It is possible to adapt to our case  the proof of Theorem 3 in \cite{de2021regularized}. We have two groups of parameters and two more terms arising from
ridge regularization.  It is useful to rewrite $\hat{G}$ as a block matrix partitioned as follows: $$\hat{G}=\begin{pmatrix}
        \hat{G}^{^{\alpha\alpha}} & \hat{G}^{^{\alpha\beta}}\\
        \hat{G}^{^{\beta\alpha}} & \hat{G}^{^{\beta\beta}}\\
    \end{pmatrix}$$ where $$ \hat{G}^{^{\alpha\alpha}}:=  \begin{pmatrix}
        \hat{G}^{^{\alpha\alpha}}_{\star\star} & \hat{G}^{^{\alpha\alpha}}_{\star\bullet} \\
        \hat{G}^{^{\alpha\alpha}}_{\bullet\star} & \hat{G}^{^{\alpha\alpha}}_{\bullet\bullet}
    \end{pmatrix},\, \hat{G}^{^{\alpha\beta}}:= \begin{pmatrix}
        \hat{G}^{^{\alpha\beta}}_{\star\star} & \hat{G}^{^{\alpha\beta}}_{\star\bullet} \\
        \hat{G}^{^{\alpha\beta}}_{\bullet\star} & \hat{G}^{^{\alpha\beta}}_{\bullet\bullet}
    \end{pmatrix},\, \hat{G}^{^{\beta\alpha}}:=  \begin{pmatrix}
        \hat{G}^{^{\beta\alpha}}_{\star\star} & \hat{G}^{^{\beta\alpha}}_{\star\bullet} \\
        \hat{G}^{^{\beta\alpha}}_{\bullet\star} & \hat{G}^{^{\beta\alpha}}_{\bullet\bullet}
    \end{pmatrix},\, \hat{G}^{^{\alpha\beta}}= \begin{pmatrix}
        \hat{G}^{^{\beta\beta}}_{\star\star} & \hat{G}^{^{\beta\beta}}_{\star\bullet} \\
        \hat{G}^{^{\beta\beta}}_{\bullet\star} & \hat{G}^{^{\beta\beta}}_{\bullet\bullet}
    \end{pmatrix} $$
    
We can rewrite the objective function: 
    \begin{align*}
       \mathcal{F}_n(\theta)= & \ (\theta-\tilde{\theta})^\tr \hat{G}(\theta-\tilde{\theta})+\sum_{j=1}^{\p}\kappa_{j}|\alpha_j|+\sum_{h=1}^{\q} \pi_{h} |\beta_h|+\lambda_{n}\sum_{j=1}^{\p}|\alpha_j|^2+\gamma_{n}\sum_{h=1}^{\q}|\beta_h|^2\\
        = & \  (\alpha-\tilde{\alpha})^\tr_\star \hat{G}^{^{\alpha\alpha}}_{\star\star} (\alpha-\tilde{\alpha})_\star+(\beta-\tilde{\beta})^\tr_\star \hat{G}^{^{\beta\beta}}_{\star\star} (\beta-\tilde{\beta})_\star\\
       & + (\alpha-\tilde{\alpha})^\tr_\bullet \hat{G}^{^{\alpha\alpha}}_{\bullet\bullet} (\alpha-\tilde{\alpha})_\bullet+(\beta-\tilde{\beta})^\tr_\bullet \hat{G}^{^{\beta\beta}}_{\bullet\bullet} (\beta-\tilde{\beta})_\bullet\\
       & + 2 \left[(\alpha-\tilde{\alpha})^\tr_\star \hat{G}^{^{\alpha\alpha}}_{\star\bullet} (\alpha-\tilde{\alpha})_\bullet+ (\beta-\tilde{\beta})^\tr_\star \hat{G}^{^{\beta\beta}}_{\star\bullet} (\beta-\tilde{\beta})_\bullet\right]\\
       & +2\Big[(\alpha-\tilde{\alpha})^\tr_\star\hat{G}^{^{\alpha\beta}}_{\star\star}(\beta-\tilde{\beta})_\star+ (\alpha-\tilde{\alpha})^\tr_\star\hat{G}^{^{\alpha\beta}}_{\star\bullet}(\beta-\tilde{\beta})_\bullet  \\ 
        & + (\alpha-\tilde{\alpha})^\tr_\bullet\hat{G}^{^{\alpha\beta}}_{\bullet\star}(\beta-\tilde{\beta})_\star + (\alpha-\tilde{\alpha})^\tr_\bullet\hat{G}^{^{\alpha\beta}}_{\bullet\bullet}(\beta-\tilde{\beta})_\bullet \Big]\\
        & +\sum_{j=1}^{\p}\kappa_{j}|\alpha_j|+\sum_{h=1}^{\q} \pi_{h} |\beta_h|+\lambda_{n}\sum_{j=1}^{\p}|\alpha_j|^2+\gamma_{n}\sum_{h=1}^{\q}|\beta_h|^2
    \end{align*}
By setting $\check{\theta}:=(\alpha_\star,0,\beta_\star,0)^\tr\in \mathbb{R}^\m$, we have \begin{align*}
   \mathcal{F}_n(\check{\theta}) = & \ (\alpha-\tilde{\alpha})^\tr_\star\hat{G}_{\star\star}^{^{\alpha\alpha}}(\alpha-\tilde{\alpha})_\star + (\beta-\tilde{\beta})^\tr_\star\hat{G}_{\star\star}^{^{\beta\beta}}(\beta-\tilde{\beta})_\star\\
    & + (\tilde{\alpha}_\bullet)^\tr \hat{G}^{^{\alpha\alpha}}_{\bullet\bullet}\tilde{\alpha}_\bullet + (\tilde{\beta}_\bullet)^\tr \hat{G}^{^{\beta\beta}}_{\bullet\bullet}\tilde{\beta}_\bullet\\
    & -2 \left[(\alpha-\tilde{\alpha})^\tr_\star\hat{G}^{^{\alpha\alpha}}_{\star\bullet}\tilde{\alpha}_\bullet + (\beta-\tilde{\beta})^\tr_\star\hat{G}^{^{\beta\beta}}_{\star\bullet}\tilde{\beta}_\bullet \right] \\
    & +2 \left[(\alpha-\tilde{\alpha})^\tr_\star \hat{G}_{\star\star}^{^{\alpha\beta}}(\beta-\tilde{\beta})_\star - (\alpha-\tilde{\alpha})^\tr_\star\hat{G}_{\star\bullet}^{^{\alpha\beta}}\tilde{\beta}_\bullet-(\tilde{\alpha}_\bullet)^\tr \hat{G}_{\bullet\star}^{^{\alpha\beta}}(\beta-\tilde{\beta})_\star + (\tilde{\alpha}_\bullet)^\tr \hat{G}^{^{\alpha\beta}}_{\bullet\bullet} \tilde{\beta}_\bullet \right]\\
    & + \sum_{j=1}^{\p^0}\kappa_{j}|\alpha_j|+\sum_{h=1}^{\q^0} \pi_{h} |\beta_h|+\lambda_{n}\sum_{j=1}^{\p^0}|\alpha_j|^2+\gamma_{n}\sum_{h=1}^{\q^0}|\beta_h|^2
\end{align*}
Let us consider \begin{align*}
    B^\alpha_n & := \left\{\displaystyle \min_{1\leq j\leq \p^0}|\hat{\alpha}_j|>0,\, \hat{\alpha}_\bullet = 0, \,\det(\hat{G}_{\star\star}^{^{\alpha\alpha}}>0) \right\} \\
        B^\beta_n & := \left\{\displaystyle \min_{1\leq h\leq \q^0}|\hat{\beta}_j|>0,\, \hat{\beta}_\bullet = 0, \,\det(\hat{G}_{\star\star}^{^{\beta\beta}}>0) \right\} \\
\end{align*}
By Theorem 1 and 2, $P(B^\alpha_n\cap B^\beta_n )\longrightarrow 1$. If $B^\alpha_n\cap B^\beta_n$ holds, then $\mathfrak{F}_n (\hat{\theta})=\displaystyle\min_{\check{\theta}\in \mathbb{R}^\m_0}\mathfrak{F}_n(\check{\theta})$, where $\mathbb{R}^\m_0:=\left\{\theta\in\mathbb{R}^\m: \alpha_\bullet=0, \beta_\bullet=0 \right\}$. 

Then, on $B^\alpha_n$:
\begin{equation*}
    0=\frac{1}{2}\frac{\partial}{\partial \alpha_\star}\mathcal{F}_n(\theta)\at[\Bigg]{\theta=\hat{\theta}_n}=\hat{G}^{^{\alpha\alpha}}_{\star\star}(\hat{\alpha}-\tilde{\alpha})_\star-\hat{G}^{^{\alpha\alpha}}_{\star\bullet}\tilde{\alpha}_\bullet+\hat{G}^{^{\alpha\beta}}_{\star\star}(\hat{\beta}-\tilde{\beta})_\star-\hat{G}^{^{\alpha\beta}}_{\star\bullet}\tilde{\beta}_\bullet+Z(\hat{\alpha})+U(\hat{\alpha})
\end{equation*} where $Z(\hat{\alpha})=\left(\frac{1}{2}\kappa_1 \mathrm{sgn}(\hat{\alpha}_1), \ldots, \frac{1}{2}\kappa_{\p^0} \mathrm{sgn}(\hat{\alpha}_{\p^0})\right)^\tr$ and $U(\hat{\alpha})=\left(\lambda_n \hat{\alpha}_1, \ldots, \lambda_n\hat{\alpha}_{\p^0}\right)^\tr$. Then, adding and subtracting the true value $\alpha_0$: \begin{equation*}
    (\hat{\alpha}-\alpha_0)_\star = (\tilde{\alpha}-\alpha_0)_\star+(\hat{G}^{^{\alpha\alpha}}_{\star\star})^{-1}\hat{G}^{^{\alpha\alpha}}_{\star\bullet}\tilde{\alpha}_\bullet-(\hat{G}^{^{\alpha\alpha}}_{\star\star})^{-1}\hat{G}^{^{\alpha\beta}}_{\star\star}(\hat{\beta}-\tilde{\beta})_\star+(\hat{G}^{^{\alpha\alpha}}_{\star\star})^{-1}\hat{G}^{^{\alpha\beta}}_{\star\bullet}\tilde{\beta}_\bullet-(\hat{G}^{^{\alpha\alpha}}_{\star\star})^{-1}Z(\hat{\alpha})-(\hat{G}^{^{\alpha\alpha}}_{\star\star})^{-1}U(\hat{\alpha})
\end{equation*}

Let us introduce a $\m^0\times\m$ matrix $$J:=\begin{pmatrix}
    J_\alpha & 0\\
    0 & J_\beta
\end{pmatrix},$$ where $J_\alpha := (\mathbf{I}_{\p^0}(\Gamma^{^{\alpha\alpha}}_{\star\star})^{-1} \Gamma^{^{\alpha\alpha}}_{\star\bullet} )$ and $J_\beta := (\mathbf{I}_{\q^0}(\Gamma^{^{\beta\beta}}_{\star\star})^{-1} \Gamma^{^{\beta\beta}}_{\star\bullet} )$. 

Let $\hat{J}_\alpha := \left(\mathbf{I}_{\p^0}n\Delta_n(\hat{G}^{^{\alpha\alpha}}_{\star\star})^{-1} \frac{1}{n\Delta_n}\hat{G}^{^{\alpha\alpha}}_{\star\bullet} \right)\overset{p}{\longrightarrow}J_\alpha$. Hence multiplying by $\sqrt{n\Delta_n}$ and adding and subtracting $J_\alpha\left\{\sqrt{n\Delta_n}(\tilde{\alpha}-\alpha_0)\right\}$:

\ \\
$\sqrt{n\Delta_n}(\hat{\alpha}-\alpha_0)_\star-J_\alpha\left\{\sqrt{n\Delta_n}(\tilde{\alpha}-\alpha_0)\right\}$\begin{align*}
     = & \ \bbone_{B^\alpha_n}\Big\{\sqrt{n\Delta_n}(\tilde{\alpha}-\alpha_0)_\star+ \sqrt{n\Delta_n}(\hat{G}^{^{\alpha\alpha}}_{\star\star})^{-1}\hat{G}^{^{\alpha\alpha}}_{\star\bullet}\tilde{\alpha}_\bullet - \sqrt{n\Delta_n}(\hat{G}^{^{\alpha\alpha}}_{\star\star})^{-1}\hat{G}^{^{\alpha\beta}}_{\star\star}(\hat{\beta}-\tilde{\beta})_\star\\
    & + \sqrt{n\Delta_n} (\hat{G}^{^{\alpha\alpha}}_{\star\star})^{-1}\hat{G}^{^{\alpha\beta}}_{\star\bullet}\tilde{\beta}_\bullet- \sqrt{n\Delta_n}(\hat{G}^{^{\alpha\alpha}}_{\star\star})^{-1}Z(\hat{\alpha})- \sqrt{n\Delta_n}(\hat{G}^{^{\alpha\alpha}}_{\star\star})^{-1}U(\hat{\alpha})-J_\alpha\left\{\sqrt{n\Delta_n}(\tilde{\alpha}-\alpha_0)\right\}\Big\}\\
    & + \bbone_{B^\alpha_n}\Big\{\sqrt{n\Delta_n}(\hat{\alpha}-\alpha_0)_\star-J_\alpha\left\{\sqrt{n\Delta_n}(\tilde{\alpha}-\alpha_0)\right\}\Big\}\\
    = & \ \bbone_{B^\alpha_n}\Big\{(\hat{J}_\alpha-J_\alpha)\left\{\sqrt{n\Delta_n}(\tilde{\alpha}-\alpha_0)\right\}-\sqrt{n\Delta_n}(\hat{G}^{^{\alpha\alpha}}_{\star\star})^{-1}\hat{G}^{^{\alpha\beta}}_{\star\star}(\hat{\beta}-\tilde{\beta})_\star+\sqrt{n\Delta_n} (\hat{G}^{^{\alpha\alpha}}_{\star\star})^{-1}\hat{G}^{^{\alpha\beta}}_{\star\bullet}\tilde{\beta}_\bullet\\
    & - \sqrt{n\Delta_n}(\hat{G}^{^{\alpha\alpha}}_{\star\star})^{-1}Z(\hat{\alpha})- \sqrt{n\Delta_n}(\hat{G}^{^{\alpha\alpha}}_{\star\star})^{-1}U(\hat{\alpha})\Big\}+ \bbone_{B^\alpha_n}\Big\{\sqrt{n\Delta_n}(\hat{\alpha}-\alpha_0)_\star-J_\alpha\left\{\sqrt{n\Delta_n}(\tilde{\alpha}-\alpha_0)\right\}\Big\}\\
    = & \ \bbone_{B^\alpha_n}\Big\{(\hat{J}_\alpha-J_\alpha) \left\{\sqrt{n\Delta_n}(\tilde{\alpha}-\alpha_0)\right\}+o_p(1)\Big\}+\bbone_{B^\alpha_n}\Big\{\sqrt{n\Delta_n}(\hat{\alpha}-\alpha_0)_\star-J_\alpha\left\{\sqrt{n\Delta_n}(\tilde{\alpha}-\alpha_0)\right\}\Big\}
\end{align*} 

where the last step holds because: \begin{equation*}
    n\Delta_n (\hat{G}_{\star\star}^{^{\alpha\alpha}})^{-1}\frac{1}{\sqrt{n\Delta_n}}\frac{1}{\sqrt{n}}\hat{G}^{^{\alpha\beta}}_{\star\star}\sqrt{n}(\hat{\beta}-\tilde{\beta})_\star\bbone_{B^{\alpha}_n}=o_p(1)O_p(1)=o_p(1)
\end{equation*}\begin{equation*}
    n\Delta_n(\hat{G}^{^{\alpha\alpha}}_{\star\star})^{-1} \frac{1}{\sqrt{n\Delta_n}}\frac{1}{\sqrt{n}}\hat{G}^{^{\alpha\beta}}_{\star\bullet}\sqrt{n}\tilde{\beta}_\bullet\bbone_{B^\alpha_n}=o_p(1)
\end{equation*}\begin{equation*}
    n\Delta_n(\hat{G}^{^{\alpha\alpha}}_{\star\star})^{-1}\frac{1}{\sqrt{n\Delta_n}}Z(\hat{\alpha})\bbone_{B^\alpha_n}=o_p(1)
\end{equation*}
\begin{equation*}
    n\Delta_n(\hat{G}^{^{\alpha\alpha}}_{\star\star})^{-1}\frac{1}{\sqrt{n\Delta_n}}U(\hat{\alpha})\bbone_{B^\alpha_n}=o_p(1)
\end{equation*}

Finally,  $$\sqrt{n\Delta_n}(\hat{\alpha}-\alpha_0)_\star-J_\alpha\Big\{\sqrt{n\Delta_n}(\tilde{\alpha}-\alpha_0)\Big\}\overset{p}{\longrightarrow}0$$

The same steps can be done considering on $B^\beta_n$, with different rates and the result holds.

Adding the property P3 and following the blockwise inversion of $\Gamma^{^{\alpha\alpha}}$ and $\Gamma^{^{\beta\beta}}$, exploited in the proof of Theorem 3 in \cite{de2021regularized}, we get $$\left(\sqrt{n\Delta_n}(\hat{\alpha}-\alpha_0)_\star,\, \sqrt{n}(\hat{\beta}-\beta_0)_\star\right)^\tr\overset{d}{\longrightarrow}N_{\m^0}\left(0, \mathrm{diag}\left((\Gamma_{\star\star}^{^{\alpha\alpha}})^{-1},(\Gamma_{\star\star}^{^{\beta\beta}})^{-1}\right)\right)$$

\end{proof}

\begin{proof}[Proof of Theorem \ref{thm:bound}]
    Starting from \eqref{consistency}, by definition of convexity we have the following inequalities, for any $r\geq 1$:

    \begin{align*}
        |A_n^{-1}(\hat{\theta}_n-\theta_0)|^r  \leq & \ \bigg | 2 \lVert \hat{D}_{n}^{-1}\rVert \lVert \hat{D}_n \rVert |A^{-1}_n(\tilde{\theta}_n-\theta_0)|\\
         & + \lVert \hat{D}_{n}^{-1}\rVert \left(\p^0 \frac{a_n}{\sqrt{n\Delta_n}} + \q^0 \frac{c_n}{\sqrt{n}} + 2\p^0 \frac{\lambda_n}{\sqrt{n\Delta_n}} |\alpha_0| + 2\q^0\frac{\gamma_n}{\sqrt{n}} |\beta_0|\right)\bigg|^r\\
         \leq & \ \bigg| \frac{1}{2}\Big|2 \cdot 2 \lVert \hat{D}_{n}^{-1}\rVert \lVert \hat{D}_n \rVert |A^{-1}_n(\tilde{\theta}_n-\theta_0)| \Big|\\
         & + \frac{1}{2}\Big|2\cdot  \lVert \hat{D}_{n}^{-1}\rVert \left(\p^0 \frac{a_n}{\sqrt{n\Delta_n}} + \q^0 \frac{c_n}{\sqrt{n}} + 2\p^0 \frac{\lambda_n}{\sqrt{n\Delta_n}} |\alpha_0| + 2\q^0\frac{\gamma_n}{\sqrt{n}} |\beta_0|\right)\Big|\bigg|^r\\
         \leq & \  \frac{1}{2}\Big|2 \cdot 2 \lVert \hat{D}_{n}^{-1}\rVert \lVert \hat{D}_n \rVert |A^{-1}_n(\tilde{\theta}_n-\theta_0)| \Big|^r\\
         & + \frac{1}{2}\Big|2\cdot  \lVert \hat{D}_{n}^{-1}\rVert \left(\p^0 \frac{a_n}{\sqrt{n\Delta_n}} + \q^0 \frac{c_n}{\sqrt{n}} + 2\p^0 \frac{\lambda_n}{\sqrt{n\Delta_n}} |\alpha_0| + 2\q^0\frac{\gamma_n}{\sqrt{n}} |\beta_0|\right)\Big|^r\\
         = & \ 2^{2r-1} \lVert \hat{D}_{n}^{-1}\rVert^r \lVert \hat{D}_n \rVert^r |A^{-1}_n(\tilde{\theta}_n-\theta_0)|^r\\
         & + 2^{r-1}  \lVert \hat{D}_{n}^{-1}\rVert^r \left|\p^0 \frac{a_n}{\sqrt{n\Delta_n}} + \q^0 \frac{c_n}{\sqrt{n}} + 2\p^0 \frac{\lambda_n}{\sqrt{n\Delta_n}} |\alpha_0| + 2\q^0\frac{\gamma_n}{\sqrt{n}} |\beta_0|\right|^r
    \end{align*}
    Applying the same previous steps, from the second part of the inequality it follows:

    \ \\ 
    $\left|\p^0 \frac{a_n}{\sqrt{n\Delta_n}} + \q^0 \frac{c_n}{\sqrt{n}} + 2\p^0 \frac{\lambda_n}{\sqrt{n\Delta_n}} |\alpha_0| + 2\q^0\frac{\gamma_n}{\sqrt{n}} |\beta_0|\right|^r$ \begin{align*}
         \leq & \  2^{r-1}\left|\p^0 \frac{a_n}{\sqrt{n\Delta_n}}\right|^r+ 2^{r-1} \left|\q^0 \frac{c_n}{\sqrt{n}} + 2\p^0 \frac{\lambda_n}{\sqrt{n\Delta_n}} |\alpha_0| + 2\q^0\frac{\gamma_n}{\sqrt{n}} |\beta_0|\right|^r\\
         \leq & \ 2^{r-1} \left|\p^0 \frac{a_n}{\sqrt{n\Delta_n}}\right|^r + 2^{r-1}\left(2^{r-1}\left|\q^0 \frac{c_n}{\sqrt{n}}\right|^r+2^{r-1}\left|2\p^0 \frac{\lambda_n}{\sqrt{n\Delta_n}} |\alpha_0| + 2\q^0\frac{\gamma_n}{\sqrt{n}} |\beta_0|\right|^r\right)\\
         \leq & \ 2^{r-1}\left|\p^0 \frac{a_n}{\sqrt{n\Delta_n}}\right|^r + 2^{2(r-1)} \left|\q^0 \frac{c_n}{\sqrt{n}}\right|^r + 2^{2(r-1)} \left(2^{r-1} \left|2\p^0 \frac{\lambda_n}{\sqrt{n\Delta_n}} |\alpha_0|\right|^r + 2^{r-1}\left| 2\q^0\frac{\gamma_n}{\sqrt{n}} |\beta_0|\right|^r \right)\\
         = & \ 2^{r-1}\left|\p^0 \frac{a_n}{\sqrt{n\Delta_n}}\right|^r + 2^{2(r-1)} \left|\q^0 \frac{c_n}{\sqrt{n}}\right|^r + 2^{4r-3} \left|\p^0 \frac{\lambda_n}{\sqrt{n\Delta_n}} |\alpha_0|\right|^r + 2^{4r-3}\left| \q^0\frac{\gamma_n}{\sqrt{n}} |\beta_0|\right|^r 
    \end{align*} Then 

    \ \\
    $2^{r-1}  \lVert \hat{D}_{n}^{-1}\rVert^r \left|\p^0 \frac{a_n}{\sqrt{n\Delta_n}} + \q^0 \frac{c_n}{\sqrt{n}} + 2\p^0 \frac{\lambda_n}{\sqrt{n\Delta_n}} |\alpha_0| + 2\q^0\frac{\gamma_n}{\sqrt{n}} |\beta_0|\right|^r$ \begin{align*}
        \leq & \ \lVert \hat{D}_{n}^{-1}\rVert^r \left( 2^{2(r-1)}\left|\p^0 \frac{a_n}{\sqrt{n\Delta_n}}\right|^r + 2^{3(r-1)} \left|\q^0 \frac{c_n}{\sqrt{n}}\right|^r + 2^{5r-4} \left|\p^0 \frac{\lambda_n}{\sqrt{n\Delta_n}} |\alpha_0|\right|^r + 2^{5r-4}\left| \q^0\frac{\gamma_n}{\sqrt{n}} |\beta_0|\right|^r \right)
    \end{align*}
Finally from the Cauchy-Schwarz inequality

\begin{align*}
    \mathbb{E}\left[|A_n^{-1}(\hat{\theta}_n-\theta_0)|^r\right]  \leq & \ 2^{2r-1} \sqrt{\mathbb{E} \left[\lvert\hat{D}_n^{-1}\rVert^{2r}\right]}\left(\mathbb{E}\left[\lVert\hat{D}_n\rVert^{4r}\right]\right)^\frac{1}{4}\left(\mathbb{E}\left[|A_n^{-1}(\hat{\theta}_n-\theta_0)|^{4r}\right]\right)^{\frac{1}{4}}\\
    & + \lVert \hat{D}_{n}^{-1}\rVert^r \bigg( 2^{2(r-1)}\mathbb{E}\left[\left|\p^0 \frac{a_n}{\sqrt{n\Delta_n}}\right|^r\right] + 2^{3(r-1)} \mathbb{E}\left[\left|\q^0 \frac{c_n}{\sqrt{n}}\right|^r\right]\\
    & + 2^{5r-4} \mathbb{E}\left[\left|\p^0 \frac{\lambda_{2,n}}{\sqrt{n\Delta_n}} |\alpha_0|\right|^r\right] + 2^{5r-4}\mathbb{E}\left[\left| \q^0\frac{\gamma_{2,n}}{\sqrt{n}} |\beta_0|\right|^r\right] \bigg)
\end{align*}

From the assumptions, the polynomial-type large deviation result (25) and Proposition 1 in \cite{yoshida2011polynomial}, the uniform $L^r$-boundedness of the estimator holds.

\end{proof}

\begin{proof}[Proof of Lemma \ref{lem:non-asy}]
    Let $\tilde \theta_n = (\tilde \alpha_n, \tilde \beta_n)$ be the quasi likelihood estimator. 
     By Taylor expansion, the Cauchy-Schwartz inequality and A5$(r)$-$(i)$ we have that 
            \begin{align*}
                |\ell_n(\tilde \alpha_n, \tilde \beta_n)  - \ell_n(\theta_0)|
                 & \leq 
                \left| \int_0^1 \lla \partial_\alpha \ell_n(\alpha_0 + u(\tilde \alpha_n - \alpha_0), \tilde \beta_n), \tilde \alpha_n - \alpha_0 \rra\mathrm d u \right|
                \\ & \leq 
                \int_0^1 
                |\partial_\alpha \overline{\ell_n}(\alpha_0 + u(\tilde \tilde \alpha_n - \alpha_0), \tilde \beta_n )| \mathrm d u 
                \, |\sqrt{n \Delta_n}(\tilde \alpha_n - \alpha_0)|
                \\
                & \leq 
                \xi_n \sqrt {\p} |\sqrt{n \Delta_n}(\tilde \alpha_n - \alpha_0)|.
            \end{align*}
            Similarly, under the assumption that $|\tilde \theta_n - \theta_0| \leq r$ and by A5$(r)-(ii)$ we have that
           \begin{align*}
                |\ell_n(\theta_0) - \ell_n(\tilde \alpha_n, \tilde \beta_n)|
                 & = 
                 \left| \int_0^1 (1-u) 
                 \lla \partial^2_{\alpha \alpha} \ell_n(\tilde \alpha_n + u(\alpha_0 - \tilde \alpha_n), \tilde \beta_n ) , (\tilde \alpha_n - \alpha_0)^{\otimes 2} \rra \mathrm d u \right|     
                 \\ & \geq
                 \frac{\mu}{2} |\sqrt{n \Delta_n} (\tilde \alpha_n - \alpha_0)|^2.
            \end{align*}
           By combining the two inequalities above we get the first claim. 
            The proof for the second inequality is analogous.
\end{proof}

\begin{proof}[Proof of Theorem \ref{thm:ineq}]

By setting $B = \hat G_n^{1/2}$, $y  = \hat G_n^{1/2} \tilde \theta_n$, $\epsilon = \hat G_n^{1/2} (\tilde \theta_n - \theta_0)$
the adaptive Elastic-Net estimator can be 
rewritten as the solution of a linear regression-type minimization problem; i.e.
\[
\hat \theta_n = \hat \theta_n(\lambda_1,\gamma_1, \lambda_2,\gamma_2)
= \arg \min_{\theta}\left\{
| y - B \theta |^2 + \lambda_1 |\alpha|_{1, \mathbf {\bf\kappa}_n} +\gamma_1 |\beta|_{1, \mathbf {\bf\pi}_n}+\theta^{\tr} C(\lambda_2,\gamma_2)\theta\right\}\]
where $C(\lambda_2,\gamma_2):=\text{diag}(\lambda_2 {\bf I}_{\p},\gamma_2{\bf I}_{\q}).$

Then the Ridge estimator $(\lambda_1=\gamma_1=0)$ is given by
\[
\hat \theta_n( \lambda_2,\gamma_2) = 
(\hat B^\top B + C(\lambda_2,\gamma_2))^{-1} B^\top y = (\hat G_n + C(\lambda_2,\gamma_2))^{-1} \hat G_n \tilde \theta_n=(\hat \alpha_n( \lambda_2),\hat \beta_n( \gamma_2))^\tr
\]
where $\hat \alpha_n( \lambda_2)$ and $\hat \beta_n( \gamma_2)$ are the Ridge estimators related to the sub-optimization problems $\mathcal F_{1,n}$ and $\mathcal F_{2,n},$ respectively; i.e.
$$\hat \alpha_n( \lambda_2)=(\hat G_n^{\alpha\alpha} + \lambda_2 {\bf I}_{\p})^{-1} \hat G_n^{\alpha\alpha} \tilde \alpha_n,$$
$$\hat \beta_n( \gamma_2)=(\hat G_n^{\beta\beta} + \gamma_2 {\bf I}_{\q})^{-1} \hat G_n^{\beta\beta} \tilde \beta_n.$$
    By following the same steps as in \cite{zou2009adaptive}
    we have that 
    \begin{align*}
        |\hat \theta_n - \theta_0|^2
        &\leq 
        2 |\hat \theta_n - \hat \theta_n(\lambda_2,\gamma_2) |^2
        + 
        2  |\hat \theta_n(\lambda_2,\gamma_2) - \theta_0 |^2\\
        &=2|\hat \alpha_n - \hat \alpha_n(\lambda_2) |^2
        + 
        2  |\hat \alpha_n(\lambda_2)- \alpha_0 |^2\\
        &\quad +2|\hat \beta_n - \hat \beta_n(\gamma_2) |^2
        + 
        2  |\hat \beta_n(\gamma_2))- \beta_0 |^2
    \end{align*}
    Since
    $$\hat \alpha_n(\lambda_2)- \alpha_0 =(\hat G_n^{\alpha\alpha} + \lambda_2 {\bf I}_{\p})^{-1}\hat G_n^{\alpha\alpha}(\tilde\alpha_n-\alpha_0)-(\hat G_n^{\alpha\alpha} + \lambda_2 {\bf I}_{\p})^{-1} \lambda_2\alpha_0$$
    we have that
       \begin{align}\label{eq:proofbound1}
        |\hat \alpha_n(\lambda_2)- \alpha_0  |^2&\leq 2|(\hat G_n^{\alpha\alpha} + \lambda_2 {\bf I}_{\p})^{-1} \lambda_2\alpha_0|^2+2|(\hat G_n^{\alpha\alpha} + \lambda_2 {\bf I}_{\p})^{-1}\hat G_n^{\alpha\alpha}(\tilde\alpha_n-\alpha_0)|^2\notag \\
         &\leq \frac{2}{(\tau_{\min}(\hat G_n^{\alpha\alpha})+\lambda_2)^2}\left(\lambda_2^2|\alpha_0|^2+  | \hat G_n^{\alpha\alpha}(\tilde\alpha_n-\alpha_0) |^2\right)\notag\\
          &\leq \frac{2}{(\tau_{\min}(\hat G_n^{\alpha\alpha})+\lambda_2)^2}\left(\lambda_2^2|\alpha_0|^2+  \tau_{\max}((\hat G_n^{\alpha\alpha})^2)|   \tilde\alpha_n-\alpha_0| ^{ 2}\right),
    \end{align}
    Furthermore

    \begin{align*}
     (\tau_{\min}(\hat G_n^{\alpha\alpha})+\lambda_2)|\hat\alpha_n-\hat\alpha_n(\lambda_2)|^2&\leq  (\hat\alpha_n-\hat\alpha(\lambda_2))^\tr  (\hat G_n^{\alpha\alpha} + \lambda_2{\bf I}_{\p} ) (\hat\alpha_n-\hat\alpha_n(\lambda_2))\\
     &\leq \lambda_1|\kappa_n||\hat\alpha_n-\hat\alpha_n(\lambda_2)|
    \end{align*}
    and then
    \begin{align}\label{eq:proofbound3}
        |\hat\alpha_n-\hat\alpha_n(\lambda_2) |
        \leq \frac{\lambda_1 |\mathbf{\kappa}_n|}{\tau_{\min}(\hat G_n^{\alpha\alpha})+\lambda_2}.
    \end{align}


    Therefore, from  \eqref{eq:proofbound1} and \eqref{eq:proofbound3}, we obtain
    \begin{align}\label{eq:alphain0}
        |\hat\alpha_n-\alpha_0|^2\leq 
        \frac{4}{(\tau_{\min}(\hat G_n^{\alpha\alpha})+\lambda_2)^2}\left(\lambda_2^2|\alpha_0|^2+ 
        \tau_{\max}((\hat G_n^{\alpha\alpha})^2)|   \tilde\alpha_n-\alpha_0| ^{ 2}+\lambda_1^2 |\kappa_n|^2\right).
    \end{align}
    For two real symmetric positive definite matrices $A$ and $B$, we have the following inequalities involving the minimum and maximum eigenvalues $\frac{1}{\tau_{\min}(AB)}\leq\frac{1}{\tau_{\min}(A)}\frac{1}{\tau_{\min}(B)} $ and $\tau_{\max}(AB)\leq \tau_{\max}(A)\tau_{\max}(B).$ 
    Therefore from \eqref{eq:alphain0}, we get
     \begin{align}
        |\hat\alpha_n-\alpha_0|^2\leq \frac{4}{(n\Delta_n\tau_{\min}(\hat D_n^{\alpha\alpha})+\lambda_2)^2} \left(\lambda_2^2|\alpha_0|^2+ (n\Delta_n\tau_{\max}(\hat D_n^{\alpha\alpha})|\tilde\alpha_n-\alpha_0|) ^{ 2}+\lambda_1^2 |\kappa_n|^2\right).
    \end{align}

    By similar steps we can write down
        \begin{align*}
        |\hat\beta_n-\beta_0|^2&\leq \frac{4}{(\tau_{\min}(\hat G_n^{\beta\beta})+\gamma_2)^2}\left(\gamma_2^2|\beta_0|^2+ \tau_{\max}((\hat G_n^{\beta\beta})^2)|   \tilde\beta_n-\beta_0| ^{ 2}+\gamma_1^2 |\pi_n|^2\right)
        \\ &\leq
         \frac{4}{(n \tau_{\min}( \hat D_n^{\beta\beta})+\gamma_2)^2}\left(\gamma_2^2|\beta_0|^2+  (n\tau_{\max}(\hat D_n^{\beta\beta})|   \tilde\beta_n-\beta_0|)^{ 2}+\gamma_1^2 |\pi_n|^2\right)
    \end{align*}

\end{proof}

\begin{proof}[Proof of Theorem \ref{thm:ineq-ii}]
We prove the result for $\hat{\alpha}_n$, the proof for $\hat{\beta}_n$   
is analogous. 
From Theorem \ref{thm:ineq}, inequality \eqref{eq:nabound1a} holds true, i.e. 
\begin{align}
        |\hat\alpha_n-\alpha_0|\leq \frac{2}{n\Delta_n\tau_{\min}(\hat D_n^{\alpha\alpha})+\lambda_2}\left(\lambda_2|\alpha_0|+ n\Delta_n\tau_{\max}(\hat D_n^{\alpha\alpha})  |\tilde\alpha_n-\alpha_0|+\lambda_1|\kappa_n|\right).
    \end{align}
On the event $\{|A_n^{-1}(\tilde \theta_n - \theta_0)| \leq r\}$, which implies $\{|\tilde \theta_n - \theta_0| \leq r/n\Delta_n\}$ Lemma \ref{lem:non-asy} with A5$(r/ n\Delta_n)$ gives 
\[
 \sqrt{n \Delta_n} |\tilde\alpha_n - \alpha_0| \leq \frac{2 \xi_n}{\mu} \sqrt{\p}.
\]
By combining the two inequalities and by using A6, inequality \eqref{eq:nabound2a} immediately follows. It remains to show that it holds with the desired probability. This is a consequence of the polynomial large deviation results in \cite{yoshida2011polynomial} (see e.g. formula 2.14 in \cite{yoshida2022nonlin}).

\end{proof}

\begin{proof}[Proof of Theorem \ref{theo:prederr}]
    We have that
    \begin{align*}
	\mathsf{MAE}(\hat X_{T_n + h}) &= \mathbb E|X_{T_n + h} - \hat X_{T_n + h}| 
	\\&=
	 \mathbb E|X_{T_n + h} - X_{T_n}  - h b(X_{T_n}, \alpha_0 ) -  h (b(X_{T_n}, \hat \alpha_n ) - b(X_{T_n}, \alpha_0 ))| 
    \\ &\leq
    \left(
    \mathbb E \left[ \mathbb E [ |X_{T_n + h} - X_{T_n}  - h b(X_{T_n}, \alpha_0) |^2 | \mathcal F_{T_n}] \right]
    \right)^{\frac 12} 
    + 
    h
    \mathbb E[C(X_{T_n}) |\hat{\alpha} - \alpha_0|]
    \\ & \leq 
    \left ( \mathbb E [h \, \mathrm{tr} (\Sigma(X_{t_n}, \beta_0)) + R(h^2, \theta_0, X_{T_n})] 
    \right)^\frac 12
    + h
    \left( \mathbb EC^2(X_{T_n}) \mathbb E|\hat{\alpha}_n - \alpha_0|^{2} \right)^{\frac 12}
    \\&\leq 
    C_1 \sqrt h + C_2 h + D h 
    \left( \mathbb E|\hat{\alpha}_n - \alpha_0|^{2} \right)^{\frac 12},
\end{align*}
where we applied the Cauchy-Schwartz inequality and in the second step we applied Lemma 7 in \cite{kess}, adapted to the multidimensional case. The conclusion follows by \autoref{thm:ineq}. 	
\end{proof}

\bibliographystyle{abbrv}

\bibliography{biblio}

\end{document}